\newenvironment{customlegend}[1][]{%
    \begingroup
    \csname pgfplots@init@cleared@structures\endcsname
    \pgfplotsset{#1}%
}{%
    \csname pgfplots@createlegend\endcsname
    \endgroup
}%
\def\addlegendimage{\csname pgfplots@addlegendimage\endcsname}
\newtheorem{theorem}{Theorem}
\newtheorem{lemma}[theorem]{Lemma}
\newtheorem{proposition}[theorem]{Proposition}
\newtheorem*{theorem*}{Theorem}
\newtheorem*{definition}{Definition}
\theoremstyle{remark}
\newtheorem{remark}{Remark}
\newtheorem*{remark*}{Remark}
\def\P{\mathop{\mathrm{P}}}
\def\E{\mathrm{E}}
\DeclareMathOperator*{\argmin}{arg\,min}
\begin{document}

\title{Delay-minimizing capacity allocation in an infinite server queueing system\footnote{To appear in Stochastic Systems.}
}
\author{Refael Hassin and Liron Ravner\footnote{lravner@post.tau.ac.il}}
\affil{Department of Statistics and Operations Research \\ Tel Aviv University}

\date{\today}
\maketitle

\begin{abstract}
We consider a service system with an infinite number of exponential servers sharing a finite service capacity. The servers are ordered according to their speed, and arriving customers join the fastest idle server. A capacity allocation is an infinite sequence of service rates. We study the probabilistic properties of this system by considering overflows from sub-systems with a finite number of servers. Several stability measures are suggested and analysed. The tail of the series of service rates that minimizes the average expected delay (service time) is shown to be approximately geometrically decreasing. We use this property in order to approximate the optimal allocation of service rates by constructing an appropriate dynamic program. 
\end{abstract}

\section{Introduction}\label{sec:Intro}
We are interested in the optimal allocation of service rates in a system with an infinite number of parallel servers and finite service capacity. Customers join the fastest server available, without jockeying if a faster server becomes available at a later time. This model is appropriate for a system with servers at different physical locations and no possibility to accommodate waiting customers. For example this may be the case in a distributed (cloud) computing system with jobs arriving at a central router that immediately sends them to the best available server. Other applications with ordered entry and heterogeneous servers are conveyor and storage systems. In this setting an allocation is given by an infinite sequence of service rates. Every such service-rate sequence determines the probabilistic traits of the system. Our objective is to minimize the stationary expected delay (service time) faced by arrivals.

From a practical point of view the infinite-server setting is of course aimed to be a good approximation of large-scale systems. It is important to note that in this respect the model presented here does indeed capture the behaviour of such systems when the capacity allocation is a ``sensible'' one. In particular, for service-rate sequences that satisfy certain stability and delay conditions that will be defined in the next sections, the blocking probability from finite sub-systems goes to zero very fast with the number of servers. This means that for a even a moderately sized system the probability of an arrival facing a full system is negligible. We show that the blocking probability decreases exponentially with the number of servers. Moreover, we provide a framework that enables the approximation of the blocking probability for a large number of servers, and thus for the required number of servers for the probability to be smaller than a given threshold.

The model is an ordered GI/M/$\infty$ system: independent and identically distributed inter-arrival times, exponential service times with heterogeneous rates and customers routed to the fastest idle server upon their arrival. An in-depth analysis of an M/M/$\infty$ with identical servers that are ordered (geographically), including heavy traffic approximations, can be found in \cite{book_N1984}. The assumption that servers are identical implies that the service capacity is infinite and the focus in \cite{book_N1984} is on distributional properties such as how many of the first $n$ servers are busy. The finite-capacity system studied in this paper is therefore very different and the analysis relies on the probability of blocking and overflows from finite server sub-systems. In particular, our methods rely on \cite{T1959} where the overflow distribution in a homogenous multi-server system with non-Markovian arrivals is characterized, and the extensions of \cite{TN1975} and \cite{Y1987} that account for heterogeneous servers. A key feature for our analysis is that blocking probabilities can be written as a product of Laplace-Stieltjes Transforms corresponding to the overflow times from subsets of the system. We leverage this structure to derive the expected delay in our infinite server system. Even though there is no queue, the number of busy servers is potentially unbounded while the output rate of the system is bounded by the finite capacity, unlike in typical infinite server settings where the output rate grows with the number of busy servers. Consequently, an important issue that arises in the infinite server model with finite service capacity is that the stability of the system depends on the allocation of service rates, and it is not enough to assume that the external arrival rate is lower than the total capacity (i.e.\ $\rho<1$). This is because the system is not work conserving in the sense that fast servers may be idle while customers are being served by slower servers. Intuitively, the service allocation needs to balance between fast rates at the good (fast) servers while still leaving enough capacity to handle overflows to the bad (slow) servers. These issues are addressed in detail in Section \ref{sec:stability}, where conditions for stability and finite expected delay are established. In particular we show that the service-rate sequence cannot decay faster than a geometric series with a decay parameter that is determined by the overflow probabilities.

The trade-off between the rate of capacity assignment, i.e.\ how much of the remaining capacity is assigned to a server when sequentially allocating from the fastest to the slowest, and the overflow probabilities is also at the core of the delay-minimization problem. While the input of the problem is quite simple: the inter-arrival distribution (a single parameter if the arrival process is Poisson), the decision variable is an infinite sequence. This leads to analytical as well as computational challenges. We formulate the optimal capacity allocation problem as an infinite dynamic program. However, the dynamic program is intractable because of the elaborate state and actions spaces. To this end we derive an asymptotically optimal geometric tail of the service-rate sequence. The asymptotic optimal geometric rate is shown to be the square-root of the term in the product representing the aforementioned overflow (blocking) probabilities. Furthermore, we use the geometric approximation of the tail in order to define a finite dynamic program which can be solved efficiently. Numerical analysis suggests that the optimal service-rate sequence is very close to geometric from the start. This means that instead of solving the original capacity allocation problem we can approximate the solution by the single parameter problem of finding the optimal geometric service-rate sequence. Moreover, in the special case of a Poisson arrival process the simple heuristic of choosing a geometric service-rate sequence with decay rate $\sqrt{\rho}$ is quite close to the approximate optimal solution.

The use of stochastic queueing models in order to model cloud computing, also known as distributed or parallel computing, is very common (e.g. \cite{KMM2012} and \cite{VSTMR2014}). For example, the highly cited paper of \cite{KMM2012} uses a M/G/$m/m+r$ queueing system to approximate the performance. Our model is related to theirs in the case of $r=0$. They state that a common assumption in cloud computing models is that there is some positive blocking probability with a predefined upper bound constraint, and our model can be conveniently used to approximate such a system. The afformentioned papers, and many others that use stochastic queueing models for cloud computing, assume homogeneous servers. However, most cloud computing systems do in fact have heterogeneous servers (see, for example, \cite{BDD1998}, \cite{ZBS2003} and \cite{KMM2013}). Furthermore, the \textit{Fastest Server First} is a common policy implemented in such systems (see \cite{ZBS2003}). In this paper we present a framework that allows for the analysis of constrained capacity allocation in large scale heterogeneous server systems. Another useful aspect of our model is that it provides tools to study the trade-off between blocking probabilities and expected delay in finite server systems. Other relevant applications of our model are large scale conveyor and storage systems (see \cite{Y1987} for a discussion).

The research of ordered service systems with heterogeneous servers has mostly focused on analysing the blocking probability in loss systems, and their minimization in particular. In \cite{TN1975} it was shown that the optimal allocation of service rates in terms of minimizing blocking in an ordered Markovian system is heterogeneous. In \cite{NE1981} it was shown that the optimal sequence of service rates, in terms of minimizing blocking probabilities, is decreasing. Analysis of an ordered system with a general arrival process, along with the comparison methods for different entry order regimes, can be found in \cite{Y1987} and \cite{SY1987}. An interesting observation made in \cite{CP1989} is that the policy of \textit{Fastest Server First} is not necessarily optimal, for example when the slower server has a lower variance of service time. Optimal assignment to an ordered system with heterogeneous customer types that can only be served by some of the servers was studied in \cite{Ross2014}. The work presented here is related to \cite{HSY2015} which analysed the capacity allocation and pricing in a loss system possibly with heterogeneous servers. The objective function considered in that paper is different from the others because the objective is maximizing profit and not minimizing blocking probabilities. This objective required analysis of the expected waiting times, which will also be important in the analysis presented here. In \cite{RM1990} routing policies were analysed with the goal of minimizing holding costs. Approximation analysis of ordered homogeneous-server systems can be found in \cite{C1985} and \cite{K2004}, and heteregenous servers with a single queue (including a waiting buffer) under the \textit{Fastest Server First} policy appeared in \cite{A2005}. Another related work is \cite{AGS2013} that considered a service capacity allocation problem for a system of parallel queues and heterogeneous customer types using a heavy-traffic approximation.

\paragraph{Paper outline:}
In Section \ref{sec:model} we present the model and mention some of its known properties. We define system stability along with necessary conditions for finite expected delay in Section \ref{sec:stability}. In Section \ref{sec:geometric} we introduce the special class of service-rate sequence that decreases geometrically. We prove that the tail of the optimal service-rate sequence is of this type. This fact is due to the product form of the blocking probabilities. In Section \ref{sec:opt} we formally define our optimization problem as an infinite horizon dynamic program and suggest a numerical method to approximate its solution using the fact that the tail of the optimal sequence is geometric. We then proceed to present numerical analysis and examples of the optimal service-rate sequence in Section \ref{sec:numerical}. The numerical results suggest that the optimal service-rate sequence is very close to geometric from the start, and not just at the tail. Finally, Section \ref{sec:discussion} features concluding remarks and a brief discussion of straightforward extensions of our analysis aimed at optimizing other performance measures of the system, apart from expected delay.

\section{Model}\label{sec:model}
Customers arrive at a service system according to a renewal process with mean inter-arrival time $\E T_0=\frac{1}{\lambda}$. The system is comprised of an infinite number of parallel exponential servers that are ordered according to service-rate; $\mu_1\geq \mu_2\geq\cdots$, such that $\sum_{n=1}^\infty \mu_n=\mu>\lambda$. Every arriving customer joins the fastest server available, and does not switch server even if a faster server later becomes available while he is still in the system. For a given $\mu$, our goal in this paper is to find a sequence of service rates that minimizes the stationary expected sojourn time (delay) of an arriving customer.

Let $\mathbf{X}=(X_1,X_2,X_3,\ldots)$ be the random sequence of server indicators ,zero if idle and one if busy, at arrival times in the limit. The state space of the process can be defined as follows\footnote{This state space description was suggested by Brian Fralix.},
\[
\mathbf{X}\in\mathcal{S}=\bigcup_{n=1}^\infty A_n,
\]
where $A_n=\{x\in\{0,1\}^\infty:\ n=\sup\{j:\ x_j=1\}\}$. In words, $A_n$ is the set of all states such that the highest indexed  busy server is $n$. Note that $\mathcal{S}$ is a countable collection of finite sets and is therefore countable. The underlying continuous time process is not Markovian, due to the general arrival distribution, however the embedded process at arrival moments is indeed a discrete-time Markov chain. The state space should not be confused with the uncountable set $\{0,1\}^\infty$ that includes states with infinitely many ones. This space is ``too big" as the probability of the process being in a state $s\in\{0,1\}^\infty$ with an infinite number of ones is zero for any finite time, much like the queue length process of a single server queue that is defined on the set positive integers $\mathbb{Z}^+$ and not $\mathbb{Z}^+\cup \{\infty\}$.

\begin{remark*}
The random variables and distributional properties discussed in this work are all with respect to the limiting distribution $\mathbf{X}$ at arrival times, which is also the stationary distribution if the process is ergodic. This distribution may be different from the limiting time average distribution and the analysis does not require PASTA. Furthermore, all random variables depend on the service-rate sequence $\{\mu_n\}_{n=1}^\infty$, but we omit this from the notations for the sake of brevity.
\end{remark*}

Let $Y:=\inf\{i:X_i=0\}$ denote the random variable of the fastest available server, according to the limiting distribution at arrival times. Further denote by $S$ the respective limiting delay (service time) faced by an arbitrary arriving customer. The expected delay, is the expected service time at the fastest idle server upon arrival,
\[
\E S= \sum_{n=1}^\infty \frac{1}{\mu_n}\P(Y=n).
\]
We will soon argue that this limit is well defined even when there is no stationary distribution for the underlying process, in which case $\E S$ is infinite. To be specific, each probability $\P(Y=n)$ is derived from the limiting distribution of a Markov chain with a finite state space and therefore the infinite sequence is well defined and so is the sum.

The state of any server $n\geq 1$ depends only on the arrival process and on the service process at servers $i\leq n$. For example, if the arrival process is Poisson then by viewing the first server as an isolated M/M/$1/0$ system,
\[
\P(Y=1) = \P(X_1=0)=\frac{\mu_1}{\lambda+\mu_1}.
\]
The distribution of $Y$ is obtained from the blocking probabilities of consecutive sub-systems,
\begin{equation}\label{eq:Y_blocking}
\P(Y=n) = \P(X_i=1 \ \forall i\leq n-1)-\P(X_i=1 \ \forall i\leq n),\ n\geq 2,
\end{equation}
where $\P(X_i=1 \ \forall i\leq n)$ is the blocking probability in a GI/M$/n/n$ system with heterogeneous ordered servers. In the following analysis we use the more compact notation:
\begin{eqnarray*}
p_n &:=& \P(X_i=1 \ \forall i\leq n), \quad n\geq 1, \\
q_n &:=& \P(Y=n), \quad n\geq 1. 
\end{eqnarray*}

%
%
%

We are interested in computing \eqref{eq:Y_blocking}, which can be re-written as $q_n=p_{n-1}-p_n$. If the servers are homogeneous then the well-known Erlang Loss Formula can be applied for the blocking probabilities, but this is not possible for an infinite server system with finite capacity. Otherwise, the blocking probabilities are given in \cite{Y1987}:
\begin{equation}\label{eq:blocking}
p_n=\prod_{i=1}^n L_{i-1}(\mu_i)=L_{n-1}(\mu_n)p_{n-1},
\end{equation}
where $p_0:=1$, $L_0(s)$ is the LST of the exogenous inter-arrival distribution, and
\begin{equation}\label{eq:LST}
L_n(s)=\frac{L_{n-1}(\mu_n+s)}{1-L_{n-1}(s)+L_{n-1}(\mu_n+s)}, \quad n\geq 1,
\end{equation}
is the Laplace-Stieljes Transform (LST) of the stationary time between overflows at server $n$, $T_n$. Specifically, $T_n$ is the time between two consecutive arrivals that find the first $n$ servers busy, and recall that $T_0$ is the external inter-arrival time. The recursive formula \eqref{eq:LST} generally relies on a Palm-type theorem for the renewal process of overflows at station $n$, that is, the probability at overflow times as opposed to the time-average distribution which is different as the counting process of overflows is not Poisson. The original result for homogeneous servers appeared in \cite{T1959} (see also p.37 of \cite{book_R1962}). A generalization to heterogeneous service rates appeared in \cite{TN1975} and a similar model with an additional waiting buffer for customers that find all servers busy upon arrival \cite{C1976}.

The derivative of \eqref{eq:LST} is
\begin{equation}\label{eq:LST_deriv}
L_n'(s)= \frac{L_{n-1}'(\mu_{n}+s)(1-L_{n-1}(s))+L_{n-1}'(s)L_{n-1}(\mu_{n}+s)}{\left(1-L_{n-1}(s)+L_{n-1}(\mu_n+s)\right)^2},
\end{equation}
and thus the mean time between overflows from server $n$ is
\[
\E T_n=-L_n'(0)=\frac{\E T_{n-1}}{L_{n-1}(\mu_n)}=\frac{\E T_{n-2}}{L_{n-2}(\mu_{n-1})L_{n-1}(\mu_n)}=\cdots=\frac{\E T_0}{p_n},
\]
yielding
\begin{equation}\label{eq:ET_n}
\E T_n=\frac{1}{\lambda p_{n}}, \quad n\geq 1.
\end{equation}

We next state a technical lemma that will be useful in the following analysis. We do not prove the lemma as these properties are straightforward extensions or rephrasing of known results.

\begin{lemma}\label{lemma:LST_properties}
The functions $L_i(s)$ satisfy the following properties:
\begin{enumerate}[label=\alph*.]
\item $L_n(s)$ is strictly decreasing with $s$, $L_n(0)=1$ and $\lim_{s\to\infty}L_n(s)=0$.
\item $L_{n-1}(s) > L_{n}(s)$ for any $s>0$ and $n\geq 1$ (Proof in \cite{Y1987}).
\item If $\mu_1\geq\mu_2\geq\cdots$ then the decreasing sequence $p_n$ is convex in the discrete sense: $p_{n-1}+p_{n+1}> 2p_n,\quad \forall n\geq 2$ (Proof in \cite{Y1986}).
\end{enumerate}
\end{lemma}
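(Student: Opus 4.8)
For parts (a) and (b) I would keep the arguments short and algebraic, exploiting the recursions \eqref{eq:LST} and \eqref{eq:LST_deriv}. For (a) the plan is induction on $n$: the base case $n=0$ holds because $L_0$ is the LST of the strictly positive, proper inter-arrival time (recall $\E T_0=1/\lambda\in(0,\infty)$), so $L_0(0)=1$, $L_0$ is strictly decreasing on $(0,\infty)$ (its derivative is $-\E[T_0 e^{-sT_0}]<0$), and $L_0(s)\to\P(T_0=0)=0$. For the inductive step, assuming $L_{n-1}$ has the three properties: plugging $s=0$ into \eqref{eq:LST} gives $L_n(0)=L_{n-1}(\mu_n)/L_{n-1}(\mu_n)=1$; as $s\to\infty$ the numerator of \eqref{eq:LST} tends to $0$ and the denominator to $1$, so $L_n(s)\to 0$; and in the numerator of \eqref{eq:LST_deriv} both summands are negative for $s>0$ because $L_{n-1}'<0$, $0<L_{n-1}(s)<1$ and $0<L_{n-1}(\mu_n+s)$, while the denominator is positive, so $L_n'(s)<0$. (One may also just invoke the standard properties of the LST of an a.s.\ positive, a.s.\ finite random variable.) For (b) I would reduce to (a): for $s>0$ the denominator $(1-L_{n-1}(s))+L_{n-1}(\mu_n+s)$ in \eqref{eq:LST} is positive, so clearing it shows $L_n(s)<L_{n-1}(s)$ is equivalent to $L_{n-1}(\mu_n+s)\bigl(1-L_{n-1}(s)\bigr)<L_{n-1}(s)\bigl(1-L_{n-1}(s)\bigr)$, i.e.\ (dividing by $1-L_{n-1}(s)>0$) to $L_{n-1}(\mu_n+s)<L_{n-1}(s)$, which is exactly strict monotonicity of $L_{n-1}$ from part (a).

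For part (c) the first step is a reformulation. By \eqref{eq:blocking}, writing $r_n:=L_{n-1}(\mu_n)=p_n/p_{n-1}\in(0,1)$ we get $q_n:=p_{n-1}-p_n=p_{n-1}(1-r_n)$ and $q_{n+1}=p_n-p_{n+1}=p_{n-1}\,r_n(1-r_{n+1})$, so the discrete convexity $p_{n-1}+p_{n+1}>2p_n$ is equivalent to $q_n>q_{n+1}$, and after dividing by $p_{n-1}$ to the scalar inequality
\[
r_n\bigl(2-r_{n+1}\bigr)<1 .
\]
Substituting \eqref{eq:LST} for $r_{n+1}=L_n(\mu_{n+1})$ and writing $g:=L_{n-1}$, clearing the positive denominator reduces this, after routine algebra, to
\[
\bigl(1-g(\mu_{n+1})\bigr)\bigl(1-2g(\mu_n)\bigr)+g(\mu_n+\mu_{n+1})\bigl(1-g(\mu_n)\bigr)>0 .
\]
If $g(\mu_n)\le\tfrac12$ this is immediate, since every factor of the first product is nonnegative and $g(\mu_n+\mu_{n+1})\bigl(1-g(\mu_n)\bigr)>0$.

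The case $g(\mu_n)>\tfrac12$ is where the work lies, and I would handle it with two inputs. First, $g=L_{n-1}$ is the LST of the non-degenerate positive random variable $T_{n-1}$, hence strictly convex; since the increment of a convex function over a window of fixed length is non-decreasing in the window's position, this gives $g(\mu_n+\mu_{n+1})\ge g(\mu_n)+g(\mu_{n+1})-g(0)=g(\mu_n)+g(\mu_{n+1})-1$. Second, $\mu_{n+1}\le\mu_n$ together with part (a) gives $g(\mu_{n+1})\ge g(\mu_n)$. Substituting the first bound and setting $u:=g(\mu_n)$, $v:=g(\mu_{n+1})$ with $\tfrac12<u\le v<1$, the desired inequality collapses to the elementary $u(v-u)\ge 0$; strictness then follows either from $v>u$ (when $\mu_{n+1}<\mu_n$) or from the strict convexity of $g$ in the window-increment bound (when $\mu_{n+1}=\mu_n$). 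I expect this case split — in particular recognizing that the only obstruction, the regime $g(\mu_n)>1/2$, is precisely controlled by convexity of the overflow-time transform — to be the one genuinely delicate point; everything else is bookkeeping. Alternatively one can simply invoke \cite{Y1986} for (c), as the authors do.
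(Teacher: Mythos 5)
The paper does not prove Lemma~\ref{lemma:LST_properties}: immediately after the statement it says the properties ``are straightforward extensions or rephrasing of known results,'' and relies on the citations to \cite{Y1987} for (b) and \cite{Y1986} for (c). Your proposal therefore supplies what the paper omits, and I have checked it and it is correct. Parts (a) and (b) are the straightforward reductions via \eqref{eq:LST} and \eqref{eq:LST_deriv} that the paper has in mind (with (b) reducing, after clearing the positive denominator and dividing by $1-L_{n-1}(s)>0$, to $L_{n-1}(\mu_n+s)<L_{n-1}(s)$, i.e.\ to (a)). Part (c) is the substantive piece: your reduction of $p_{n-1}+p_{n+1}>2p_n$ to the scalar form $r_n(2-r_{n+1})<1$, then via \eqref{eq:LST} with $g:=L_{n-1}$ to
\[
\bigl(1-g(\mu_{n+1})\bigr)\bigl(1-2g(\mu_n)\bigr)+g(\mu_n+\mu_{n+1})\bigl(1-g(\mu_n)\bigr)>0,
\]
is algebraically exact, and the case split at $g(\mu_n)\le\tfrac12$ handles the easy regime, while in the regime $g(\mu_n)>\tfrac12$ the superadditivity bound $g(\mu_n+\mu_{n+1})\ge g(\mu_n)+g(\mu_{n+1})-1$ from convexity of $g$, together with $g(\mu_{n+1})\ge g(\mu_n)$ from $\mu_{n+1}\le\mu_n$, does collapse the left side to $u(v-u)\ge 0$; strictness comes from $v>u$ or, when $\mu_n=\mu_{n+1}$, from strict convexity, exactly as you say. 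The one thing I would make explicit is that $T_{n-1}$ is a.s.\ positive and not a.s.\ constant at $0$ (inherited from $T_0$ being a proper inter-arrival time and the renewal structure of overflows), since this is what underwrites both the strict monotonicity used in (a)--(b) and the strict convexity used to close the $\mu_n=\mu_{n+1}$ subcase of (c). Compared with the paper's approach of citing \cite{Y1986}, yours makes the mechanism visible --- the only obstruction is $g(\mu_n)>\tfrac12$, and strict convexity of the overflow LST is precisely what removes it --- at the cost of a page of algebra the paper does not spend.
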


\begin{remark*}
In the following sections we use the notation $a\wedge b:=\min\{a,b\}$. We will also make frequent use of the notation $a_n\approx b_n$ to indicate that the two sequence, $\{a_n\}$ and $\{b_n\}$, have the same tail behaviour. Formally, this means that there exists a constant $0<C<\infty$ such that
\[
\lim_{n\to\infty}\frac{a_n}{b_n}=C,
\]
and when used for the limits themselves it means that they are proportional:
\[
\frac{\lim_{n\to\infty}a_n}{\lim_{n\to\infty}b_n}=C.
\]
We use $a_n\ll b_n$ to indicate that $C=0$. In the numerical analysis we will use $\eqsim$ when numerical results are close (in a non-accurate sense) to some value.
\end{remark*}

\section{System stability}\label{sec:stability}
In an infinite-server system with infinite capacity, as the homogeneous server GI/M/$\infty$ model, the system is always stable. In many queueing systems with finite capacity the queue-length process is stable if $\rho<1$, in the sense that the number of customers in the system does not explode (and the underlying embedded Markov process is positive recurrent). However this is not sufficient for our model because the service allocation may cause the effective arrival rate to a subset of the system to exceed its service capacity. For example if the arrival process is Poisson then $p_1=L_0(\mu_1)=\frac{\lambda}{\lambda+\mu_1}$, further if $\mu_1=\mu-\epsilon$ then the effective arrival rate to the system excluding server $1$ is $\lambda p_1=\lambda\frac{\lambda}{\lambda+\mu-\epsilon}$, which is larger than $\epsilon$ when $\epsilon$ is chosen small enough.

This paper does not directly address the issue of positive recurrence of the process $\mathbf{X}$ at arrival times, which seems to require a different approach than the blocking probability and delay computations employed here. Rather, we define two different levels of stability: the first simply states that all subsystems have a greater capacity than their effective arrival rate, and the second is finite expected delay - a condition that may not hold even if the underlying process is positive recurrent. 

As a first reasonable condition, and as we will show in Proposition \ref{prop:equivalence}, one that is also necessary for finite expected delay, we would like a service-rate sequence $\{\mu_n\}_{n=1}^\infty$ to satisfy
\begin{equation}\label{eq:stability}
\lambda p_n<\mu-\sum_{i=1}^n\mu_i=\sum_{i=n+1}^\infty\mu_i, \quad \forall n\geq 1.
\end{equation}

That is, the effective arrival rate into the system excluding the first $n$ servers is smaller than the remaining service capacity. In the memoryless arrival example, the first condition for $n=1$ is $\lambda \frac{\lambda}{\lambda+\mu_1}<\mu-\mu_1$, or equivalently,
\begin{equation*}\label{eq:mu1_stability}
\mu_1\in\left(0,\frac{1}{2}\left(\mu-\lambda+\sqrt{(\mu-\lambda)(\mu+3\lambda)}\right)\right).
\end{equation*}

\begin{definition}
A service-rate sequence $\{\mu_n\}_{n=1}^\infty$ is feasible if it satisfies condition \eqref{eq:stability}.
\end{definition}

Denote the set of feasible service-rate sequence by 
\[
\mathcal{M}:=\left\lbrace \{\mu_n\}_{n=1}^\infty: \quad  \lambda p_n<\sum_{i=n+1}^\infty \mu_i, \ \forall n\geq 1 \right\rbrace.
\]

\begin{proposition}\label{lemma:feasible}
For any $\lambda\in(0,\mu)$ there exists a feasible service-rate sequence ($\mathcal{M}\neq\emptyset$) that is decreasing and satisfies $\sum_{n=1}^\infty \mu_n=\mu$ .
\end{proposition}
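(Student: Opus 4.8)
The plan is to exhibit an explicit geometric (or geometric-like) service-rate series and verify the feasibility inequalities \eqref{eq:stability} directly. Fix $\lambda\in(0,\mu)$. A natural candidate is $\mu_n = c(1-\alpha)\alpha^{n-1}$ for a decay parameter $\alpha\in(0,1)$ and normalizing constant $c$ chosen so that $\sum_{n=1}^\infty\mu_n=\mu$, i.e.\ $c=\mu$. With this choice the tail capacity is $\sum_{i=n+1}^\infty\mu_i=\mu\alpha^n$, so the feasibility condition \eqref{eq:stability} becomes $\lambda p_n<\mu\alpha^n$ for all $n\ge 1$. Since $p_n$ is a product of LST values $p_n=\prod_{i=1}^n L_{i-1}(\mu_i)$ with each factor in $(0,1)$ by Lemma~\ref{lemma:LST_properties}(a), the series $p_n$ is itself geometrically bounded; the task is to show its decay rate beats $\alpha$ for a suitable $\alpha$ close enough to $1$.

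The key step is to obtain a uniform upper bound on the per-step ratio $p_n/p_{n-1}=L_{n-1}(\mu_n)$. Here I would use Lemma~\ref{lemma:LST_properties}(b), which gives $L_{n-1}(s)<L_0(s)$ for all $s>0$, together with the fact that $L_0$ is the LST of a nondegenerate inter-arrival time with $L_0(0)=1$ and $L_0$ strictly decreasing. As $\alpha\uparrow 1$ the arguments $\mu_n=\mu(1-\alpha)\alpha^{n-1}$ all shrink, so $L_0(\mu_n)$ is close to $1$, which is the wrong direction; the saving grace is that $\sum_i\mu_i$ stays equal to $\mu$, so $\prod_{i=1}^n L_0(\mu_i)$ can still be controlled. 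Concretely, using the elementary inequality $L_0(s)\le 1-\delta s$ valid for $s$ in a neighbourhood of $0$ (where $\delta\in(0,1/\lambda)$ can be taken as close to $1/\lambda=\E T_0$ as we like, since $-L_0'(0)=\E T_0$), one gets $p_n\le\prod_{i=1}^n(1-\delta\mu_i)$, and then comparing $\prod(1-\delta\mu_i)$ against $\mu\alpha^n/\lambda$ reduces to a one-dimensional estimate in $\alpha$. Alternatively, and perhaps more cleanly, I would first treat the Poisson case where $L_0(s)=\lambda/(\lambda+s)$ exactly, verify \eqref{eq:stability} for geometric $\mu_n$ by a direct computation, and then note that for a general renewal arrival process the inequality $L_{n-1}(s)\le L_0(s)$ lets one dominate $p_n$ by the Poisson-case quantity $\prod_{i=1}^n \lambda/(\lambda+\mu_i)$, so the Poisson verification transfers.

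I would then check monotonicity: the chosen $\mu_n=\mu(1-\alpha)\alpha^{n-1}$ is manifestly strictly decreasing, so no extra work is needed there; if the argument instead forces a non-geometric perturbation near the front of the series, I would arrange the first few rates in decreasing order by hand while keeping the geometric tail, which preserves both feasibility (the tail inequalities are the binding ones for large $n$) and monotonicity.

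The main obstacle I anticipate is the tension identified above: making the tail capacity $\sum_{i>n}\mu_i$ large (to satisfy feasibility) pushes the individual rates $\mu_n$ toward zero, which pushes each LST factor $L_{n-1}(\mu_n)$ toward $1$ and thus makes $p_n$ decay slowly — potentially slower than the tail capacity. Resolving this requires exploiting that it is the product $\prod L_0(\mu_i)$, not a single factor, that must be compared against $\alpha^n$, and that the $\mu_i$'s have fixed sum $\mu$; the bound $1-\delta s\ge e^{-\delta s/(1-\delta s_{\max})}$-type manipulation turns the product into $\exp(-\delta\mu)$-order decay independent of $\alpha$, which then dominates $\alpha^n$ once $\alpha$ is taken close enough to $1$. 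Pinning down the precise admissible range of $\alpha$ as a function of $\lambda$ and $\mu$ (and of the inter-arrival distribution through $\E T_0$) is the one genuinely quantitative part of the argument.
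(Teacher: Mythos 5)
Your strategy of exhibiting an explicit geometric series $\mu_n=\mu(1-\alpha)\alpha^{n-1}$ and checking $\lambda p_n<\mu\alpha^n$ directly runs into an obstruction that the bound you propose cannot overcome. Feasibility forces $p_n\to 0$, because the right-hand side $\mu\alpha^n$ vanishes; but your upper bound $p_n\le\prod_{i=1}^n L_0(\mu_i)$ does \emph{not} vanish. Since $L_0$ is convex with $L_0(0)=1$ and $L_0'(0)=-\E T_0$, one has $1-L_0(s)\le s\,\E T_0$, so $\sum_i\bigl(1-L_0(\mu_i)\bigr)\le\mu\,\E T_0<\infty$ and the infinite product $\prod_{i=1}^\infty L_0(\mu_i)$ converges to a strictly positive constant; the same is true of $\prod_i(1-\delta\mu_i)$ and of the Poisson surrogate $\prod_i\lambda/(\lambda+\mu_i)$. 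Your chain of estimates therefore bounds $p_n$ below a fixed positive constant while $\mu\alpha^n/\lambda\to 0$, so for large $n$ the comparison goes the wrong way, and the closing remark that the product's ``$\exp(-\delta\mu)$-order decay \ldots dominates $\alpha^n$'' has the required inequality reversed (you need the bound on $p_n$ to lie \emph{below} $\mu\alpha^n/\lambda$, not above it). The deeper issue is that replacing every $L_{i-1}$ by $L_0$ discards precisely the mechanism that makes $p_n$ decay: by Lemma~\ref{lemma:LST_properties}b the overflow transforms $L_n(\cdot)$ shrink strictly with $n$, and it is this shrinkage, not the sizes of the $\mu_i$ (which sum to a finite $\mu$), that drives $p_n\to 0$. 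Note also that even the weaker claim that \emph{some} geometric series is feasible is nontrivial and not settled analytically; the paper flags closely related questions as open in Remark~\ref{remark:FD_exist} and in the $\ell(\alpha)$ discussion of Section~\ref{sec:geometric}.

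The paper's proof sidesteps any global control of $p_n$ by building the series inductively. Assuming $\mu_1,\ldots,\mu_n$ satisfy \eqref{eq:stability}, the equation $\mu_{n+1}=\mu-\sum_{i\le n}\mu_i-\lambda p_nL_n(\mu_{n+1})$ has a unique positive root $m_{n+1}$ (by convexity and monotonicity of $L_n$, as pictured in Figure~\ref{fig:stability_solution}), and any $\mu_{n+1}\in(0,m_{n+1})$ preserves \eqref{eq:stability} at step $n+1$. Choosing $\tilde\mu_{n+1}=\alpha\,(m_{n+1}\wedge\tilde\mu_n)$ for a fixed $\alpha\in(0,1)$ then yields a non-increasing, summable, feasible series. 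The inductive step uses only that feasibility at stage $n$ keeps the next admissible interval nonempty; no rate-of-decay estimate for $p_n$ is needed. If you wish to salvage a constructive geometric argument, you would need a lower bound on the \emph{gap} between $L_{i-1}(\mu_i)$ and $1$ that accumulates to infinity along the sequence, which cannot come from $L_0$ alone.
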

\begin{proof}
If $\lambda<\mu$ then the range for $\mu_1$ given by
\[
\mu_1<\mu-\lambda p_1=\mu-\lambda L_0(\mu_1).
\] 
Let $m_1$ be the solution to $\mu_1=\mu-\lambda L_0(\mu_1)$. Recall that $L_0(0)=1$ and that $\lambda<\mu$, therefore as $L_0$ is an LST, hence convex, there exists a unique solution $m_1>0$. This argument is illustrated in Figure \ref{fig:stability_solution}. Any point $\mu_1$ in the interior of the interval $(0,m_1)$ satisfies the stability condition for $n=1$, in particular $\tilde{\mu}_1=\alpha m_1$, for any $\alpha\in(0,1)$.

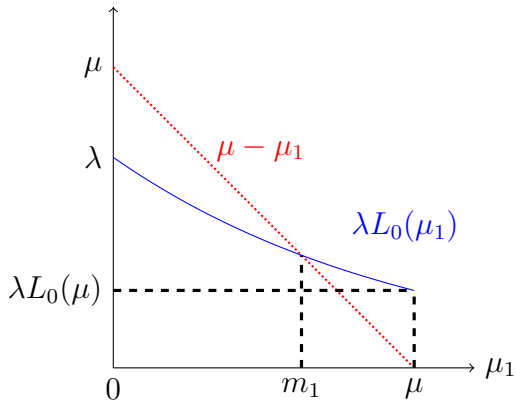
\begin{figure}[H]
\centering
\begin{tikzpicture}[xscale=4,yscale=4]
  \def\xmin{0}
  \def\xmax{1.2}
  \def\ymin{0}
  \def\ymax{1.2}
    \draw[->] (\xmin,\ymin) -- (\xmax,\ymin) node[right] {$\mu_1$} ;
    \draw[->] (\xmin,\ymin) -- (\xmin,\ymax);

	\node at (\xmin,\ymin) [below] {$0$};
	\node at (1,\ymin) [below] {$\mu$};
	\node at (0.6255,\ymin) [below] {$m_1$};
	\node at (\xmin,1) [left] {$\mu$};
	\node at (\xmin,0.7) [left] {$\lambda$};
	\node at (\xmin,0.2575156) [left] {$\lambda L_0(\mu)$};
	
    \draw[red,thick,densely dotted, domain=0:1] plot (\x, {1-\x});
    \draw[blue, domain=0:1] plot (\x, {0.7*exp(-\x)});
    
    \draw[black, very thick, dashed] (0.6255,\ymin)--(0.6255,0.3745);
    \draw[black, very thick, dashed] (\xmin,0.2575156)--(1,0.2575156);
    \draw[black, very thick, dashed] (1,\ymin)--(1,0.2575156);
    
    \node[draw=none,,color=red] at (0.49,0.73) {$\mu-\mu_1$};
    \node[draw=none,,color=blue] at (0.97,0.47) {$\lambda L_0(\mu_1)$};
\end{tikzpicture}
 \caption{Illustration of the feasibility interval $(0,m_1)$ for $\mu_1$.}\label{fig:stability_solution}
\end{figure}

Suppose that $\tilde{\mu}_1,\ldots,\tilde{\mu}_n$ is a decreasing sequence that satisfies \eqref{eq:stability}, then by applying the product form of \eqref{eq:blocking} we have that condition \eqref{eq:stability} is satisfied for $n+1$ if
\[
\lambda L_{n}(\mu_{n+1})p_{n}<\mu-\sum_{i=1}^n \mu_i-\mu_{n+1},
\]
or equivalently
\[
0< \mu_{n+1} < \mu-\sum_{i=1}^n\mu_i-\lambda p_{n}L_{n}(\mu_{n+1}).
\]

Let $m_{n+1}>0$ be the unique solution to $\mu_{n+1} = \mu-\sum_{i=1}^n\mu_i-\lambda p_{n}L_{n}(\mu_{n+1})$. Repeating the argument illustrated in Figure \ref{fig:stability_solution}, the solution is unique and positive because $L_{n}(\mu_{n+1})\in[0,1]$ is convex and condition \eqref{eq:stability} is satisfied for $n$. We thus conclude that any $\mu_{n+1}\in(0,m_{n+1})$ satisfies \eqref{eq:stability}. In particular, for any $\alpha\in(0,1)$ $\tilde{\mu}_{n+1}:=\alpha(m_{n+1}\wedge\mu_{n})$ is non-increasing, feasible and $\sum_{n=1}^\infty \tilde{\mu}_{n}\leq \mu$.
\end{proof}

Recall that regardless of whether the service-rate sequence is feasible, the sub-system of the first $n$ servers is ergodic for every finite $n$, hence the limit probabilities $p_n$ and $q_n$ exist for any service-rate sequence. Further observe that if condition \eqref{eq:stability} is satisfied for some $N$ then it is satisfied for all $n<N$ as well, as if this was not the case, i.e., $\lambda L_{n-1}(0)p_{n-1}>\mu-\sum_{i=1}^{n-1}\mu_i$, then there is no $\mu_n>0$ such that $\lambda L_{n-1}(\mu_n)p_{n-1}=\mu-\sum_{i=1}^{n-1}\mu_i-\mu_n$ (consider Figure \ref{fig:stability_solution} for the case that the solid convex overflow rate line starts above the dotted linear capacity allocation line).

\begin{lemma}\label{lemma:L_limit}
For any service-rate sequence $\{\mu_n\}_{n=1}^\infty$ such that $\mu_n>0$ for all $n\geq 1$ and $\sum_{n=1}^\infty \mu_n=\mu$, there exists a limit
\[
\ell:=\lim_{n\to\infty}L_{n-1}(\mu_n)\in\left(L_0(\mu),1\right].
\]
\end{lemma}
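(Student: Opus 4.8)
The plan is to show that the sequence $a_n := L_{n-1}(\mu_n)$ is monotone and bounded, so that its limit exists, and then to pin down the range of the limit. First I would observe that monotonicity should follow from the structure of the recursion \eqref{eq:LST} together with part (b) of Lemma \ref{lemma:LST_properties} and the fact that $\mu_n\to 0$ (which holds because $\sum_n\mu_n=\mu<\infty$). Indeed, writing $a_{n+1}=L_n(\mu_{n+1})$, I want to compare this with $a_n=L_{n-1}(\mu_n)$. The two competing effects are that $L_n < L_{n-1}$ pointwise (so passing from index $n-1$ to $n$ pushes the value down), while $\mu_{n+1}\le\mu_n$ means we evaluate at a smaller argument (so, since each $L_i$ is decreasing by part (a), this pushes the value up). I expect these to not simply cancel; the cleanest route is to use the convexity statement in part (c): since $p_n = \prod_{i=1}^n L_{i-1}(\mu_i)$ and $p_{n-1}+p_{n+1} > 2p_n$, dividing by $p_{n-1}$ gives $1 + a_n a_{n+1} > 2 a_n$, i.e. $a_n a_{n+1} - 2a_n + 1 > 0$. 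Combined with $a_n\in(0,1]$ this forces a one-sided relation between $a_{n+1}$ and $a_n$; specifically $a_{n+1} > 2 - 1/a_n$, and one checks this together with $a_{n+1}\le 1$ is consistent with $\{a_n\}$ being non-decreasing. I would make this rigorous by showing directly from $a_n a_{n+1} - 2a_n + 1 > 0$ that $a_{n+1} > a_n$ cannot fail in a way that contradicts the bounds, or more simply by a direct monotonicity argument on the $L_i$ recursion.

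Alternatively — and this may be the more transparent argument — I would work with the ratios $r_i := p_i/p_{i-1} = a_i$ directly and use that discrete convexity of $p_n$ is equivalent to the ratios $a_n$ being non-decreasing, a standard fact: $p_{n+1}-p_n \geq p_n - p_{n-1}$ rearranges (all terms positive, $p_n$ decreasing) to $a_{n+1} \geq a_n$ only after sign care, so I would instead note that log-convexity is what gives monotone ratios and check whether the convexity in part (c) upgrades to log-convexity, or argue the monotonicity of $a_n$ directly from \eqref{eq:LST}. In any case, once $\{a_n\}$ is shown to be monotone non-decreasing and bounded above by $1$ (since each $L_i(s)\le L_i(0)=1$ by part (a)), the limit $\ell$ exists and $\ell\le 1$.

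For the lower bound $\ell > L_0(\mu)$, I would argue as follows. By part (b), $L_{n-1}(\mu_n) > L_n(\mu_n) \geq$ something, but more usefully: since $\mu_n\le\mu$ for all $n$ and each $L_{i}$ is decreasing, we have $L_{i-1}(\mu_i) \geq L_{i-1}(\mu)$. The nesting from part (b) gives $L_{i-1}(\mu) > L_i(\mu) > \cdots$, so the $L_{i-1}(\mu)$ themselves form a decreasing sequence bounded below, hence each exceeds its own limit; but this only gives $\ell \geq \lim_i L_i(\mu)$, which is not obviously $> L_0(\mu)$. The sharper observation is that $\mu_n\to 0$, so for large $n$, $a_n = L_{n-1}(\mu_n)$ is being evaluated near $0$ where $L_{n-1}$ is near $1$; combined with monotonicity of $\{a_n\}$ this already gives $\ell$ close to $1$ in favorable cases, but in general I would get the strict bound $\ell > L_0(\mu)$ by noting $a_1 = L_0(\mu_1) > L_0(\mu)$ (strict, since $\mu_1 < \mu$ as there are infinitely many positive terms summing to $\mu$) and then using $\ell \geq a_1$ from monotonicity. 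This is the cleanest path and I would lead with it.

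The main obstacle I anticipate is establishing the monotonicity of $a_n = L_{n-1}(\mu_n)$ cleanly: the recursion \eqref{eq:LST} mixes the index shift (governed by part (b)) with the argument shift (governed by $\mu_{n+1}\le\mu_n$ and part (a)), and these act in opposite directions, so a naive term-by-term comparison fails. I expect the resolution to come from the discrete convexity of $p_n$ in part (c), rewritten as $a_n a_{n+1} + 1 > 2a_n$, which — because all $a_i\in(0,1]$ — is equivalent to $a_{n+1} > 2 - 1/a_n \geq a_n$ (the last inequality being $a_n + 1/a_n \geq 2$, i.e. AM–GM). That chain gives $a_{n+1} > a_n$ directly for every $n$, so $\{a_n\}$ is strictly increasing, and the limit exists in $(a_1, 1] \subseteq (L_0(\mu), 1]$.
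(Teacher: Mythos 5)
Your argument hinges on a monotonicity claim for $a_n := L_{n-1}(\mu_n) = p_n/p_{n-1}$ that you derive from Lemma~\ref{lemma:LST_properties}c, and there is a sign error in that derivation. From discrete convexity $p_{n-1}+p_{n+1}>2p_n$, dividing by $p_{n-1}$ indeed gives $1 + a_n a_{n+1} > 2 a_n$, i.e.\ $a_{n+1} > 2 - 1/a_n$. But AM--GM gives $a_n + 1/a_n \geq 2$, which rearranges to $2 - 1/a_n \leq a_n$, \emph{not} $2 - 1/a_n \geq a_n$ as you wrote. So the bound you extract, $a_{n+1}>2-1/a_n$, sits \emph{below} $a_n$ and does not force $a_{n+1}>a_n$. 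Concretely, convexity of a decreasing positive sequence is strictly weaker than log-convexity (which is what monotone ratios would require), so this route cannot close. Indeed the paper's Lemma~\ref{lemma:l_invariance} shows that for any service-rate series in $\mathcal{FD}$ the tail of $\{\ell_n\}=\{a_n\}$ is \emph{decreasing}, which is the opposite of your claim. Once monotonicity of $a_n$ falls away, so does your existence argument for the limit and your lower bound $\ell\geq a_1$.

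There is a second, independent problem: Lemma~\ref{lemma:LST_properties}c is stated only under the hypothesis $\mu_1\geq\mu_2\geq\cdots$, whereas Lemma~\ref{lemma:L_limit} assumes only $\mu_n>0$ and $\sum_n\mu_n=\mu$; the paper's proof does not use monotonicity of the rates, so any proof relying on part (c) either covers a strictly smaller class of series or needs an additional argument reducing to the monotone case.

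The paper's proof proceeds quite differently: it unrolls recursion~\eqref{eq:LST} into the closed form
\[
L_{n-1}(\mu_n)=\frac{L_0\big(\sum_{i=1}^n\mu_i\big)}{\prod_{i=1}^{n-1}\Big[1-L_{i-1}\big(\textstyle\sum_{k=i+1}^n\mu_k\big)+L_{i-1}\big(\textstyle\sum_{k=i}^n\mu_k\big)\Big]},
\]
observes each factor in the denominator lies in $(0,1)$ (giving the bounds $L_0(\mu)\le L_{n-1}(\mu_n)\le 1$ for every $n$), and then shows the infinite product of the denominator factors converges to a positive limit $a<1$ via a monotone-convergence argument on the logs (each factor $a_{in}$ is monotone in $n$ by convexity of $L_{i-1}$, and the log-sum is bounded because the product does not vanish). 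That yields $\ell=L_0(\mu)/a\in(L_0(\mu),1]$ without ever needing $\{a_n\}$ to be monotone. Your lower-bound idea $a_1=L_0(\mu_1)>L_0(\mu)$ is a true statement, but it cannot substitute for the product analysis, because without monotonicity of $\{a_n\}$ you cannot pass from $a_1>L_0(\mu)$ to $\ell>L_0(\mu)$.
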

\begin{proof} 
Iterating the recursion of \eqref{eq:LST} yields
\begin{equation}\label{eq:Ln_product}
\begin{split}
L_{n-1}(\mu_n) &= \frac{L_{n-2}(\mu_n+\mu_{n-1})}{1-L_{n-2}(\mu_n)+L_{n-2}(\mu_n+\mu_{n-1})} \\
&= \frac{L_{n-3}(\mu_n+\mu_{n-1}+\mu_{n-2})}{\left[1-L_{n-3}(\mu_n+\mu_{n-1})+L_{n-3}(\mu_n+\mu_{n-1}+\mu_{n-2})\right]\left[1-L_{n-2}(\mu_n)+L_{n-2}(\mu_n+\mu_{n-1})\right]} \\
&= \frac{L_{0}\left(\sum_{i=1}^{n}\mu_{i}\right)}{\prod_{i=1}^{n-1}\left[1-L_{i-1}\left(\sum_{k=i+1}^{n}\mu_k\right)+L_{i-1}\left(\sum_{k=i}^{n}\mu_k\right)\right]}.
\end{split}
\end{equation}
By Lemma \ref{lemma:LST_properties}a each term in the product in the denominator is smaller than one, therefore for any $n\geq 1$, $L_{n-1}(\mu_n)\geq L_{0}\left(\sum_{i=1}^{n}\mu_{i}\right)$. The lower and upper bounds are obtained using Lemma \ref{lemma:LST_properties}a: first the fact that the LST is a decreasing function yields $L_{n-1}(\mu_n)\geq L_0\left(\sum_{i=1}^n\mu_i\right)\geq L_0(\mu)$, and furthermore every every term in the sequence $L_{n-1}(\mu_n)$ is bounded from above by $1$, hence,
\[
L_0(\mu)\leq L_{n-1}(\mu_n)\leq 1.
\]
By the continuity of $L_0$, $\lim_{n\to\infty}L_{0}\left(\sum_{i=1}^{n}\mu_{i}\right)=L_0(\mu)$, since $\sum_{i=1}^\infty\mu_i=\mu$. Let 
\[
a_{in}:=1-L_{i-1}\left(\sum_{k=i+1}^{n}\mu_k\right)+L_{i-1}\left(\sum_{k=i}^{n}\mu_k\right),
\]
then
\[
L_{n-1}(\mu_n) = \frac{L_{0}\left(\sum_{i=1}^{n}\mu_{i}\right)}{\prod_{i=1}^{n-1}a_{in}}\leq 1,
\]
which implies that the product in the denominator does not converge to zero. If the limit $a:=\lim_{n\to\infty} \prod_{i=1}^{n-1}a_{in}>0$ exists then
\[
\ell=\lim_{n\to\infty}L_{n-1}(\mu_n)=\frac{L_0(\mu)}{a}.
\]
We will verify the existence of the limit $a$ in three steps as outlined below:
\begin{enumerate}
\item We show that $a_{in}\in(0,1]$ is increasing with $n$ and therefore has a limit $a_{i}:=\lim_{n\to\infty}a_{in}$.
\item Let $b_{in}:=|\log(a_{in})|$, then $b_{in}\in[0,\infty)$ is decreasing with $n$ and has a limit $b_{i}:=\lim_{n\to\infty}b_{in}$.
\item The sum $\sum_{i=1}^{n-1}b_{in}=-\log\left(\prod_{i=1}^{n-1}a_{in}\right)$ converges to a limit $b<\infty$, and therefore the product $\prod_{i=1}^{n-1}a_{in}$ converges to a limit $a=e^{-b}$.
\end{enumerate}
\textbf{Step 1:} Since $\mu_n>0$ for all $n\geq 1$ then the convexity of $L_{i-1}$ implies that
\[
L_{i-1}\left(\sum_{k=i+1}^{n}\mu_k\right)-L_{i-1}\left(\sum_{k=i}^{n}\mu_k\right)>L_{i-1}\left(\sum_{k=i+1}^{n+1}\mu_k\right)-L_{i-1}\left(\sum_{k=i}^{n+1}\mu_k\right)>0, \quad \forall 1\leq i\leq n<\infty,
\]
hence, $a_{in}$ is increasing with $n$ and bounded by one, and thus there exists a limit,
\[
a_{i}:=\lim_{n\to\infty}a_{in}=1-L_{i-1}\left(\sum_{k=i+1}^{\infty}\mu_k\right)+L_{i-1}\left(\sum_{k=i}^{\infty}\mu_k\right).
\]

\textbf{Step 2:} Let $b_{in}:=|\log(a_{in})|$ and $b_{i}:=\lim_{n\to\infty}b_{in}$, and observe that $\log(a_{in})\leq 0$ as $a_{in}\in[L_0(\mu),1]$. The sum $\sum_{i=1}^{n-1}b_{in}=-\log\left(\prod_{i=1}^{n-1}a_{in}\right)$ is bounded as $n\to\infty$ because the product does not converge to zero. Moreover, as $a_{in}$ is increasing with $n$, $b_{in}=|\log(a_{in})|$ is decreasing and $b_{in}\geq b_i$ for all $1\leq i\leq n$. 

\textbf{Step 3:} The monotonicity of $b_{in}$ further implies that $\sum_{i=1}^{n-1}b_{in}\geq \sum_{i=1}^{n-1}b_i $. As argued in the previous step, the series $\sum_{i=1}^{n-1}b_{in}$ is bounded when taking $n\to\infty$, hence $\sum_{i=1}^{n-1}b_{i}$ is bounded and increasing and thus converges to a limit $b<\infty$. This further implies that for every $\epsilon>0$ there exists an $N$ such that $\sum_{i=N}^\infty b_i<\frac{\epsilon}{2}$. As $\sum_{i=N}^{n-1}b_{in}\leq \sum_{i=N}^{\infty}b_{in}$, the monotone convergence theorem yields
\[
\lim_{n\to\infty}\sum_{i=N}^{n-1}b_{in}\leq \lim_{n\to\infty}\sum_{i=N}^{\infty}b_{in}=\sum_{i=N}^\infty b_i<\frac{\epsilon}{2},
\] 
and we conclude that there exists some $\hat{N}\geq N$ such that $\sum_{i=N}^{n-1}b_{in}\leq \epsilon$, for all $n>\hat{N}$. Therefore, for $n>\hat{N}$,
\[
\sum_{i=1}^{n-1}b_{in}=\sum_{i=1}^{N-1}b_{in}+\sum_{i=N}^{n-1}b_{in}\leq \sum_{i=1}^{N-1}b_{in}+\epsilon.
\]
Furthermore, for any $N$ there exists some $\tilde{N}\geq N$ such that $\sum_{i=1}^{N-1}b_{in}-\sum_{i=1}^{N-1}b_{i}\leq\epsilon$, for all $n\geq\tilde{N}$, and then for all $n\geq\max\{\tilde{N},\hat{N}\}$,
\[
\sum_{i=1}^{n-1}b_{in}-\sum_{i=1}^{n-1}b_{i}\leq \sum_{i=1}^{N-1}b_{in}+\epsilon-\sum_{i=1}^{n-1}b_{i}\leq \sum_{i=1}^{N-1}b_{in}-\sum_{i=1}^{N-1}b_{i} +\epsilon \leq 2\epsilon,
\]
which yields
\[
0\leq \sum_{i=1}^{n-1}b_{in}-\sum_{i=1}^{n-1}b_{i}\leq 2\epsilon, \quad \forall n\geq \max\{\tilde{N},\hat{N}\}.
\]
The above holds for an arbitrary $\epsilon>0$ and thus we conclude that $\lim_{n\to\infty}\sum_{i=1}^{n-1}b_{in}=\sum_{i=1}^{\infty}b_{i}$, and
\[
\prod_{i=1}^{n-1}a_{in}=e^{-\sum_{i=1}^{n-1}b_{in}}\xrightarrow{n\to\infty}e^{-\sum_{i=1}^{\infty}b_{i}}=:a.
\]
We conclude that there exists a limit $\ell=\lim_{n\to\infty}L_{n-1}(\mu_n)=\frac{L_0(\mu)}{a}$. 
\end{proof}

We now turn our attention to the expected delay,
\[
\E S=\sum_{n=1}^\infty \frac{q_n}{\mu_n}.
\]

\begin{definition}
A service-rate sequence $\{\mu_n\}_{n=1}^\infty$ satisfies finite delay (FD) if it belongs to
\[
\mathcal{FD}:=\left\lbrace\{\mu_n\}_{n=1}^\infty: \ \E S<\infty\right\rbrace.
\]
\end{definition}

From \eqref{eq:Y_blocking} and \eqref{eq:blocking} we have
\begin{equation}\label{eq:qn_recursion}
q_n=p_{n-1}-p_n=(1-L_{n-1}(\mu_n))\prod_{i=1}^{n-1} L_{i-1}(\mu_i),
\end{equation}
that is, a non-homogeneous geometric distribution. For $n\geq 1$, recall that $\ell_n:=L_{n-1}(\mu_n)$, $\ell:=\lim_{n\to\infty}\ell_n$ and denote $p:=\lim_{n\to\infty}p_n$. If $\ell<1$ then the geometric term tends to the constant $\ell$, i.e.\ the tail is as of a memoryless distribution.

\begin{lemma}\label{lemma:L_p_limit}
For any external arrival distribution $T_0$ and service-rate sequence $\{\mu_n\}_{n=1}^\infty$,
\begin{enumerate}[label=\alph*.]
\item $p=0 \ \Leftrightarrow \ \sum_{n=1}^\infty (1-\ell_n)=\infty$,
\item $\ell<1 \ \Rightarrow \ p=0$,
\item $\ell=1 \ \Leftrightarrow$ $Y$ is heavy tailed: $\sum_{n=1}^\infty q_n e^{\eta n}=\infty, \ \forall \eta>0$,
\item $\frac{\mu_n}{p_{n}}\xrightarrow{n\to\infty} 0 \ \Rightarrow \ \ell=1$.
\end{enumerate}
\end{lemma}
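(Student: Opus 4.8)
The plan is to prove the four implications in sequence, exploiting the representation $q_n = (1-\ell_n)p_{n-1}$ from \eqref{eq:qn_recursion} together with the telescoping identity $p_n = \prod_{i=1}^n \ell_i$ and Lemma~\ref{lemma:L_limit}, which guarantees $\ell:=\lim_n \ell_n$ exists in $(L_0(\mu),1]$.

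For part (a), I would take logarithms: $\log p_n = \sum_{i=1}^n \log \ell_i$. Since each $\ell_i\in(0,1]$, the partial sums $\sum_{i=1}^n(-\log\ell_i)$ are nondecreasing, so $p_n\downarrow p\ge 0$ with $p=0$ iff $\sum_{i=1}^\infty(-\log\ell_i)=\infty$. The equivalence with $\sum_{i=1}^\infty(1-\ell_i)=\infty$ then follows from the elementary fact that for $\ell_i$ bounded away from $0$ (which holds by Lemma~\ref{lemma:L_limit}, $\ell_i\ge L_0(\mu)>0$) one has $-\log\ell_i \asymp (1-\ell_i)$; more precisely $1-\ell_i \le -\log\ell_i \le \frac{1-\ell_i}{L_0(\mu)}$, so the two series converge or diverge together. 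Part (b) is then immediate: if $\ell<1$ then $1-\ell_i\to 1-\ell>0$, so $\sum(1-\ell_i)=\infty$, and (a) gives $p=0$.

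For part (c), the direction $\ell=1\Rightarrow$ heavy tail: for any $\eta>0$, since $\ell_n\to 1$ there is $N$ with $\ell_n > e^{-\eta/2}$ for $n\ge N$, hence $p_{n-1}=p_{N-1}\prod_{i=N}^{n-1}\ell_i \ge p_{N-1}e^{-\eta(n-N)/2}$. Also $1-\ell_n$ need not vanish, but I need a lower bound on $q_n$; here I would argue that $q_n e^{\eta n} = (1-\ell_n)p_{n-1}e^{\eta n}$ and, summing, compare against $\sum_n (p_{n-1}-p_n)e^{\eta n}$, using summation by parts to show that if $\sum q_n e^{\eta n}<\infty$ then $p_n e^{\eta n}\to 0$, forcing $\sum_{i}(-\log\ell_i) \ge \eta n - o(n)$, i.e. $-\log\ell_i$ bounded below in Cesàro average by $\eta$, contradicting $\ell_n\to 1$. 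For the converse, heavy tail $\Rightarrow \ell=1$: suppose $\ell<1$; then by (b) and the tail estimate $p_{n-1}\le C\tilde\ell^{\,n}$ for some $\tilde\ell\in(\ell,1)$ and all large $n$ (choosing $\tilde\ell$ above $\limsup\ell_n=\ell$), whence $q_n\le C\tilde\ell^{\,n}$ and $\sum q_n e^{\eta n}<\infty$ for $\eta<-\log\tilde\ell$, so $Y$ is not heavy-tailed. The contrapositive gives the claim.

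Part (d): assume $\mu_n/p_n\to 0$. The natural route is a stability-type bound showing $1-\ell_n\to 0$. From \eqref{eq:ET_n}, $\E T_n = 1/(\lambda p_n)$, and $\E T_n = \E T_{n-1}/\ell_n$, so $\ell_n = \E T_{n-1}/\E T_n = p_n/p_{n-1}$ — consistent with the product form. I would instead bound $1-\ell_n = q_n/p_{n-1}$ from above using the structure of the overflow LST: the overflow stream into servers beyond $n-1$ has rate $\lambda p_{n-1}$, and server $n$ with rate $\mu_n$ acting as a loss server sees blocking probability $\ell_n = L_{n-1}(\mu_n)$, which I expect to satisfy $\ell_n \ge \mu_n/(\mu_n + \lambda p_{n-1})$ (an Erlang-B–type lower bound valid because extra autocorrelation in the overflow process only helps, or directly from the convexity in Lemma~\ref{lemma:LST_properties} and $L_{n-1}(s)\ge L_0(s)\ge$ the exponential LST after a suitable comparison). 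Granting such a bound, $1-\ell_n \le \lambda p_{n-1}/(\mu_n+\lambda p_{n-1})$, and dividing numerator and denominator by $p_{n-1}$ and using $\mu_n/p_n\to0$ (together with $\ell_n$ bounded away from $0$, so $p_n\asymp p_{n-1}$) gives $1-\ell_n\to 0$, i.e. $\ell=1$. The main obstacle is precisely establishing this inequality $\ell_n \ge \mu_n/(\mu_n+\lambda p_{n-1})$ rigorously from \eqref{eq:LST}: one must show that the iterated transform $L_{n-1}$ is dominated below (at the relevant argument) by the LST of an exponential with the right mean, which should follow from an induction using monotonicity and convexity (Lemma~\ref{lemma:LST_properties}a,b) but needs care with the recursion \eqref{eq:LST}. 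Everything else reduces to the logarithm/summation-by-parts bookkeeping above.
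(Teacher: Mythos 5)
Parts (a) and (b) are correct and essentially match the paper. For (a) the paper simply cites the standard equivalence between convergence of $\prod(1-a_n)$ and $\sum a_n$; your direct version via $1-\ell_i \le -\log\ell_i \le (1-\ell_i)/L_0(\mu)$ (using $\ell_i\ge L_0(\mu)>0$ from Lemma~\ref{lemma:L_limit}) is a fine elementary substitute. Part (b) is identical to the paper's.

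Part (c) is correct but takes a genuinely different, more self-contained route. The paper invokes a heavy-tail criterion ($-\tfrac1n\log\P(Y>n)\to 0$) plus Stolz--Ces\`aro. You argue by hand: the converse direction, $\ell<1$ implies $q_n\le p_{n-1}\le C\tilde\ell^{\,n}$ for any $\tilde\ell\in(\ell,1)$, hence $\sum q_n e^{\eta n}<\infty$ for small $\eta>0$, is clean. For the forward direction the key step you point at is that $\sum q_n e^{\eta n}<\infty$ forces $p_n e^{\eta n}\to 0$; note this drops out immediately from $p_n=\P(Y>n)=\sum_{m>n}q_m$, so $p_n e^{\eta n}\le\sum_{m>n}q_m e^{\eta m}\to 0$, without any summation by parts. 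From there $-\log p_n\ge \eta n - C$, while $\ell_n\to 1$ would make the Ces\`aro average of $-\log\ell_n$ tend to $0$, a contradiction. Both the paper's and your argument ultimately hinge on the same identity $-\log p_n=\sum_i(-\log\ell_i)$; yours trades a citation for a short calculation.

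Part (d) has a genuine gap, and the proposed bound has the wrong direction. You want to show $\ell_n\to 1$, so you need a \emph{lower} bound on $\ell_n$ that \emph{tends to $1$} under $\mu_n/p_n\to 0$. The bound you conjecture, $\ell_n\ge \mu_n/(\mu_n+\lambda p_{n-1})$, does the opposite: when $\mu_n/p_{n-1}\to 0$ this right-hand side tends to $0$, so even if it held it would be vacuous, and correspondingly your deduced bound $1-\ell_n\le \lambda p_{n-1}/(\mu_n+\lambda p_{n-1})$ tends to $1$, not $0$. (Incidentally, $\mu_n/(\mu_n+\lambda p_{n-1})$ is the \emph{idle} probability of the surrogate M/M/$1/0$, not its blocking probability; the two have been swapped.) The intended comparison with an Erlang-type system is also not justified and is not needed. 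The paper's short proof uses nothing but the probabilistic definition of the LST: $\ell_n=L_{n-1}(\mu_n)=\E e^{-\mu_n T_{n-1}}$, and since $t\mapsto e^{-\mu_n t}$ is convex, Jensen gives $\ell_n\ge e^{-\mu_n\E T_{n-1}}=e^{-\mu_n/(\lambda p_{n-1})}\ge e^{-\mu_n/(\lambda p_n)}$, which tends to $1$ precisely when $\mu_n/p_n\to 0$. That is the observation missing from your attempt.
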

\begin{proof} ${ }$
\begin{enumerate}[label=\alph*.]
\item By \eqref{eq:blocking}, $p_n=\prod_{i=1}^n \ell_i$. For any positive sequence $\{a_n\}_{n=1}^\infty$ the convergence of the product $\prod_{n=1}^\infty (1-a_n)$ to a non-zero and finite limit is equivalent to the convergence of the sum $\sum_{n=1}^\infty a_n$ (see \cite{book_A1974}, p. 209), hence $p_n\xrightarrow{n\to\infty}0$ if and only if $\sum_{n=1}^\infty (1-\ell_n)=\infty$.
\item If $\ell<1$ then clearly $\sum_{n=1}^\infty (1-\ell_n)=\infty$, hence by the previous property we have that $p=0$.
\item An equivalent condition for $Y$ being heavy-tailed is given by Theorem 2.6 of \cite{book_SKZ2011}:
\[
-\frac{1}{n}\log\P(Y>n)\xrightarrow{n\to\infty} 0.
\]
As $\P(Y>n)=p_n$, this is equivalent by the Stolz-Ces\'{a}ro Theorem (discrete version of L'Hopital's Rule) to
\[
\log p_n-\log p_{n+1}\xrightarrow{n\to\infty} 0,
\]
and as $p_n=\prod_{n=1}^\infty \ell_n$, we conclude that
\[
\log p_n-\log p_{n+1}=-\log\frac{p_{n+1}}{p_{n}}=-\log\ell_{n+1}
\]
converges to zero if and only if $\ell=1$.
\item Recall the definition of the LST, $\ell_n=L_{n-1}(\mu_n)=\E e^{-\mu_n T_{n-1}}$, then by applying Jensen's inequality and \eqref{eq:ET_n} we conclude that
\[
\ell_n=\E e^{-\mu_n T_{n-1}} \geq e^{-\mu_n \E T_{n-1}}=e^{-\frac{\mu_n}{\lambda p_{n-1}}}\geq e^{-\frac{\mu_n}{\lambda p_{n}}}.
\]
\end{enumerate}
\end{proof}

Lemma \ref{lemma:L_p_limit} suggests that the tail behaviour of the LST sequence $L_{n-1}(\mu_n)$, and its limit in particular, is a key component in analysing the stability and expected delay in the system. 
The following proposition summarizes the relationship between feasibility, finite expected delay and the tail behaviour of the LST sequence. In particular we obtain a necessary and sufficient condition for finite expected delay: any feasible service-rate sequence that satisfies $\ell<1$ with slower decay rate than $\ell$. This will be useful for the optimization problem in the following sections.

\begin{proposition}\label{prop:equivalence}
Let $\lambda$ and $\{\mu_n\}_{n=1}^\infty$ be the arrival rate and service-rate sequence, such that $\sum_{n=1}^\infty \mu_n=\mu>\lambda$. Then the following properties are satisfied:
\begin{enumerate}[label=\alph*.]
\item $ \{\mu_n\}_{n=1}^\infty\in\mathcal{M} \ \Rightarrow \ p=0$,
\item $\{\mu_n\}_{n=1}^\infty\in\mathcal{FD} \ \Rightarrow \ \ell<1$,
\item If $\ell<1$, such that $\ell^n\ll \mu_n$ then $\{\mu_n\}_{n=1}^\infty\in\mathcal{FD}$,
\item $\{\mu_n\}_{n=1}^\infty\in\mathcal{FD} \ \Rightarrow \ \{\mu_n\}_{n=1}^\infty\in\mathcal{M}$ (i.e.\ $\mathcal{FD}\subseteq\mathcal{M}$).
\end{enumerate}
\end{proposition}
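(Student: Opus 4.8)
The plan is to prove the four implications in the order (a), (d), (b), (c), since (b) will use both (a) and (d). Part (a) is immediate: if $\{\mu_n\}\in\mathcal M$ then $\lambda p_n<\sum_{i>n}\mu_i$, and because $\sum_i\mu_i=\mu<\infty$ the right-hand side is the tail of a convergent series, hence $\to 0$; thus $p_n\to 0$, i.e. $p=0$.

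For (d) I would argue the contrapositive: assume $\{\mu_n\}\notin\mathcal M$ and show $\E S=\infty$. By the observation following Proposition \ref{lemma:feasible} (failure of \eqref{eq:stability} propagates to all larger indices) there is $n_0$ with $\lambda p_n\ge\sum_{i>n}\mu_i\ge\mu_{n+1}$ for all $n\ge n_0$, so $\mu_n\le\lambda p_{n-1}$ for $n>n_0$. Then by \eqref{eq:qn_recursion},
\[
\frac{q_n}{\mu_n}=\frac{p_{n-1}-p_n}{\mu_n}\ge\frac{p_{n-1}-p_n}{\lambda p_{n-1}}=\frac{1-\ell_n}{\lambda},
\]
so $\E S\ge\frac1\lambda\sum_{n>n_0}(1-\ell_n)$. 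If $\sum_n(1-\ell_n)=\infty$ we are done; otherwise $p>0$ by Lemma \ref{lemma:L_p_limit}a, and using $\mu_n\le\mu/n$ (monotonicity of $\{\mu_n\}$ together with $\sum_i\mu_i=\mu$) we get $\E S\ge\frac1\mu\sum_n n q_n=\frac1\mu\sum_{n\ge 0}p_n\ge\frac1\mu\sum_n p=\infty$. In particular this argument also records the auxiliary fact $p>0\Rightarrow\E S=\infty$, which is reused in (b).

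For (b), suppose $\{\mu_n\}\in\mathcal{FD}$. By (d) the series is feasible and then by (a) $p=0$, hence $\sum_n(1-\ell_n)=\infty$ and $\ell\le 1$ by Lemma \ref{lemma:L_limit}. Assume for contradiction $\ell=1$. The idea is that $\ell_n\to 1$ forces the per-server ``load'' $\mu_n\E T_{n-1}=\mu_n/(\lambda p_{n-1})$ to vanish, and in that regime $q_n/\mu_n$ stays bounded below by a positive constant, so $\E S=\sum_n q_n/\mu_n=\infty$. Quantitatively: from $1-e^{-x}\le x$ and \eqref{eq:ET_n} one has the cheap bound $1-\ell_n=\E[1-e^{-\mu_n T_{n-1}}]\le\mu_n/(\lambda p_{n-1})$, while the substance of the step is the matching lower bound $1-\ell_n\ge c\,\mu_n/(\lambda p_{n-1})$ with some $c>0$ valid whenever the load is bounded; combined with \eqref{eq:qn_recursion} this gives $q_n=(1-\ell_n)p_{n-1}\asymp\mu_n/\lambda$ and hence $q_n/\mu_n\ge c'>0$ for all large $n$, a contradiction with $\E S<\infty$. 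The main obstacle is exactly this lower bound on $1-L_{n-1}(\mu_n)$: it requires controlling the lower tail of the overflow time $T_{n-1}$, equivalently showing that $\ell_n\to 1$ implies $\mu_n/p_{n-1}\to 0$ and not merely $\mu_n/p_{n-1}=O(1)$. I expect this to come from the recursion \eqref{eq:LST}–\eqref{eq:Ln_product}: since $\ell_n\le L_0(\mu_n)$ by Lemma \ref{lemma:LST_properties}b one first gets $\mu_n\to 0$, and then expanding $\ell_n$ through \eqref{eq:Ln_product} together with the convexity statements in Lemma \ref{lemma:LST_properties} should bound $\ell_n$ away from $1$ whenever $\mu_n/p_{n-1}$ fails to vanish.

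For (c), assume $\ell<1$ and $\ell^n\ll\mu_n$. By Lemma \ref{lemma:L_p_limit}b, $p=0$, so $\E S=\sum_n q_n/\mu_n$ with $q_n\le p_{n-1}$. Since $\ell_n\to\ell<1$, fix $\ell'\in(\ell,1)$ and $n_0$ with $\ell_n\le\ell'$ for $n\ge n_0$, whence $p_{n-1}\le C(\ell')^{n-1}$; the hypothesis $\mu_n/\ell^n\to\infty$ (which forces $\mu_n$ to decay strictly more slowly than the geometric rate governing $p_n$) then makes $\sum_n p_{n-1}/\mu_n$ converge, so $\E S<\infty$. Equivalently one can run a ratio test on $a_n:=q_n/\mu_n$: by \eqref{eq:qn_recursion}, $q_{n+1}/q_n=\ell_n(1-\ell_{n+1})/(1-\ell_n)\to\ell<1$, while $\mu_n/\mu_{n+1}$ is controlled by $\ell^n\ll\mu_n$, giving $\limsup_n a_{n+1}/a_n<1$. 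As in (b), the delicate point is matching the exact geometric rate of $\{p_n\}$ against the decay of $\{\mu_n\}$, so here the precise reading of $\ell^n\ll\mu_n$ and the near-geometric behaviour of $p_n$ implied by $\ell_n\to\ell$ (Lemma \ref{lemma:L_limit}, Lemma \ref{lemma:LST_properties}c) are what must be invoked.
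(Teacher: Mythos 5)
Part (a) is the paper's argument verbatim. Part (d), however, takes a genuinely different and more transparent route than the paper's. The paper reasons heuristically that a series on the boundary of feasibility must have $\mu_n\approx\ell^n$ and that anything decaying at least that fast has $\sum\ell^n/\mu_n=\infty$; this presupposes $\ell<1$ and stacks several ``$\approx$''s. Your contrapositive avoids all of that: once \eqref{eq:stability} fails at one index it fails at all larger ones, which yields $\mu_n\le\lambda p_{n-1}$ and the elementary bound $\E S\ge\lambda^{-1}\sum(1-\ell_n)$; in the residual case $\sum(1-\ell_n)<\infty$ you get $p>0$ by Lemma \ref{lemma:L_p_limit}a, and then $\mu_n\le\mu/n$ (valid since the series is non-increasing) gives $\E S\ge\mu^{-1}\E Y=\mu^{-1}\sum_{n\ge0}p_n\ge\mu^{-1}\sum p=\infty$. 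This is self-contained and also hands you the reusable fact $p>0\Rightarrow\E S=\infty$.

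For (b) you have correctly located the real difficulty, and it is a genuine gap: the lower bound $1-\ell_n\ge c\,\mu_n/(\lambda p_{n-1})$ (equivalently, that $\ell=1$ forces $\mu_n/p_{n-1}\to0$) is exactly the converse of Lemma \ref{lemma:L_p_limit}d, and it is the open issue the authors themselves flag in Remark \ref{remark:conj_l1_EW}. Nothing in the tools you cite produces it — Jensen and convexity only give upper bounds on $1-\ell_n$. You should be aware, though, that the paper's own proof of (b) does not bypass this: it bounds $\E S>\E Y$ and, extrapolating from $\E Y\approx(1-\ell)^{-1}$ for $\ell<1$, concludes that $\ell=1\Rightarrow\E S=\infty$. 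But heavy-tailedness of $Y$ (Lemma \ref{lemma:L_p_limit}c) does not imply $\E Y=\infty$: a profile with $\ell_n\sim1-c/n$, $c>1$, gives $p_n\sim n^{-c}$, hence $\E Y=\sum p_n<\infty$ while $\ell=1$; in that very case $q_n/\mu_n\sim\lambda^{-1}$ and $\E S=\infty$ for the reason $you$ identify, not because $\E Y$ diverges. So your diagnosis names the missing lemma precisely; the $\E Y$ route merely hides it.

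Part (c) has the same kind of gap in your sketch as in the paper's. Choosing $\ell'\in(\ell,1)$ with $p_{n-1}\le C(\ell')^{n-1}$ and then only using $\ell^n/\mu_n\to0$ does not make $\sum p_{n-1}/\mu_n$ converge: with the paper's definition of $\ll$, $\mu_n=n\ell^n$ satisfies $\ell^n\ll\mu_n$, yet $\sum\ell^{n-1}/(n\ell^n)$ diverges. A correct version needs a strictly stronger hypothesis — a margin $\beta\in(\ell,1)$ with $\mu_n\ge c\beta^n$ eventually — so that, taking $\ell'\in(\ell,\beta)$, one gets $p_{n-1}/\mu_n=O\big((\ell'/\beta)^n\big)$, geometrically summable. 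You flag this as ``the delicate point,'' which is the right instinct; the paper's step ``$\E S=\infty\Rightarrow\mu_n<e^{-\eta n}$'' quietly assumes the same margin and is not a valid deduction as written.
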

\begin{proof} ${}$
\begin{enumerate}[label=\alph*.]
\item This can be seen directly from \eqref{eq:stability} as the right-hand side tends to zero due to the capacity constraint.

\item The series $\sum_{n=1}^\infty\mu_n$ converges, therefore its tail decays faster than that of the harmonic series. Without loss of generality, as we are only interested in the tail behaviour, we assume this is the case for all $n\geq 1$:
\[
\mu_n<\frac{1}{n} \ \Leftrightarrow \ \frac{1}{\mu_n}>n.
\]
The expected delay then satisfies
\[
\E S=\E\frac{1}{\mu_Y}>\E Y.
\]
If $\ell<1$ then $q_n\approx (1-\ell)\ell^{n-1}$ by \eqref{eq:qn_recursion}, and
\[
\E Y=\sum_{n=1}^\infty nq_n \approx \frac{1}{1-\ell}.
\]
Hence, $\ell=1$ implies that $\E S=\infty$. In other words, $\ell<1$ is a necessary condition for finite delay.
\item If $\ell<1$ then by Lemma \ref{lemma:L_p_limit}c $Y$ is not heavy tailed: there exists $\eta>0$ such that
\[
\sum_{n=1}^\infty q_n e^{\eta n}<\infty.
\]
If we further assume that 
\[
\E S=\sum_{n=1}^\infty q_n\frac{1}{\mu_n}=\infty,
\]
then the tail of the service-rate sequence decays even faster than the exponential term, i.e.,
\[
\frac{1}{\mu_n}>e^{\eta n} \ \Leftrightarrow \ \mu_n<e^{-\eta n}.
\]
Equivalently we can say that $\mu_n<\alpha^n$ for $\alpha=e^{-\eta}$. If $\mu_n=\beta^n$ such that $\ell<\beta<\alpha$ then
\[
\E S=\sum_{n=1}^\infty q_n\frac{1}{\mu_n}\approx\sum_{n=1}^\infty \frac{\ell^n}{\beta^n}<\infty,
\]
contradicting the assumption that $\E S=\infty$. Hence, if $\ell<1$ and the service-rate sequence decays slower than $\ell^n$ then the expected delay is finite.
\item Any sequence $\{\mu_n\}_{n=1}^\infty$ that decays at least as fast as $\ell^n$ induces infinite expected delay because 
\[
\sum_{n=1}^\infty\frac{\ell^n}{\mu_n}=\infty.
\]
If $\{\mu_n\}_{n=1}^\infty\in\mathcal{M}$ such that $\mu_n\approx\ell^n$ then 
\[
\sum_{i=n+1}^\infty \mu_i\approx \sum_{i=n+1}^\infty \ell^i=\frac{\ell^{n+1}}{1-\ell}\approx \ell^n \approx \lambda p_n,
\]
hence the tail of the service-rate sequence is on the boundary of the feasible range given by \eqref{eq:stability}. This means that the inequality condition of \eqref{eq:stability} is satisfied for every $n$ although the difference converges to zero, and moreover that any sequence decreasing at a faster rate is not feasible.
We conclude that feasibility of a sequence, $\{\mu_n\}_{n=1}^\infty\in\mathcal{M}$, is a necessary condition for finite delay, $\{\mu_n\}_{n=1}^\infty\in\mathcal{FD}$.
\end{enumerate}
\end{proof}

Proposition \ref{prop:equivalence} yields a convenient necessary and sufficient condition for a feasible service-rate sequence to satisfy $\E S<\infty$, by combining parts b and c of the proposition:
\begin{equation}\label{eq:finite_condition}
\sum_{n=1}^\infty\frac{\ell^n}{\mu_n}<\infty .
\end{equation}

We conclude this section by pointing out open questions and additional refinements of the stability analysis that can be considered in future work on this model.

\begin{remark}\label{remark:conj_l1_EW}
We conjecture that a stronger result than Proposition \ref{prop:equivalence} holds, namely that 
\[
\{\mu_n\}_{n=1}^\infty\in\mathcal{FD} \ \Leftrightarrow \ \ell<1.
\]
Proposition \ref{prop:equivalence}c establishes that if $\mu_n$ decays slowly enough then $\ell<1$ is sufficient for FD. Furthermore, if $\ell<1$ and $\E S=\infty$ such that $\mu_n\leq \beta^n$, where $\beta<\ell$, then $\frac{\mu_n}{p_n}\leq\frac{\beta^n}{\ell^n}\to 0$, and by Lemma \ref{lemma:L_p_limit}d we conclude that $\ell_n\to 1$, a contradiction. That is, if the service-rate sequence decays faster than the blocking probability then $\ell=1$ and the expected delay is infinite. We are left with checking the case of $\mu_n\approx p_n\approx\ell^n$, where $\ell<\beta$. We believe that in this case $\ell_n\to 1$ as well, and this belief is supported by numerical tests, but we were unable to prove this claim. In such a case $\ell_n\to 1$ at a slow rate (in the sense of Lemma \ref{lemma:L_p_limit}a). If this is true then indeed $\E S<\infty \Leftrightarrow \ell<1$, but we leave this issue as an open question. A more speculative conjecture is that the extreme case on the boundary of the feasibility region, $\mu_n\approx\ell^n$, occurs when the underlying process is null-recurrent.
\end{remark}

\begin{remark}\label{remark:FD_exist}
An additional open question is whether for any $\lambda<\mu$ there exists a feasible service-rate sequence $\{\mu_n\}_{n=1}^\infty$ such that $\E S<\infty$, i.e.\ $\mathcal{FD}\neq\emptyset$. We conjecture that this is the case, but have no proof. Note that for any finite $N$ it is possible to construct a sequence $\{\mu\}_{n=1}^N$ such that $\mu_n$ decays at a slower rate than $p_n$ (by some positive factor), but the difficulty lies in showing that the rates don't coincide when taking $N\to\infty$.  
\end{remark}

\begin{remark}\label{remark:fixed_blocking}
Little's Law implies that a finite expected delay, $\E S<\infty$, is equivalent to a finite expected number of customers in the system. This means that a feasible service-rate sequence and $\ell<1$ are both necessary, but not sufficient, conditions for the expected number of customers in the system to be finite, which in itself is sufficient but not necessary for general system stability (in terms of positive recurrence of the underlying process). Nevertheless, the probability that a customer that arrives at server $n$, after being blocked by the previous servers, finds it busy is $\P(X_n=1|Y\geq n)=L_{n-1}(\mu_n)$. This can be seen by considering the blocking probability of the first server in a system with external arrival distribution $L_{n-1}$. From Lemma \ref{lemma:L_p_limit}b we have that for any service-rate sequence,
\[
\P(X_n=1|Y\geq n)\xrightarrow{n\to\infty}\ell>0.
\]
This gives us an interesting result: ``bad'' servers, i.e.\ large $n$ and slow service-rate $\mu_n$, block a fixed proportion of arrivals to them.
\end{remark}

\section{Geometric service-rate series}\label{sec:geometric}

A very natural capacity allocation to consider is using a simple geometric sequence determined by a single parameter. This is especially called for in light of Proposition \ref{prop:equivalence} that established that if $\ell<1$ and the service-rate sequence decays slower than a geometric sequence with rate $\ell$ then the expected delay is finite. Moreover, such service-rate sequences satisfy properties that will be useful for dealing with the capacity allocation problem. Namely, the stability and finite delay conditions have a simple form and the tail of the optimal solution is indeed approximately geometric under some invariance conditions which will be elaborated.

Suppose that the service-rate sequence is determined by a single parameter representing the service capacity allocated to the first server. If we assume without loss of generality that $\mu=1$ (and then $\rho=\lambda$), then the class of such service-rate sequence is
\[
\mathcal{M}_{g}:=\left\lbrace\{\mu_n\}_{n=1}^\infty:\ \mu_n=\alpha(1-\alpha)^{n-1}, \alpha\in(0,1)\right\rbrace.
\]
In this formulation, the single parameter is the service allocation of the first server, $\mu_1=\alpha$. 

For any $\{\mu_n\}\in\mathcal{M}_{g}$ we have that $\sum_{i=n+1}^\infty\mu_i=(1-\alpha)^n$, and therefore the feasibility condition \eqref{eq:stability} is simply
\begin{equation*}\label{eq:stability_alpha}
\lambda p_n< (1-\alpha)^{n},\quad \forall n\geq 1,
\end{equation*}
and the finite delay condition \eqref{eq:finite_condition} is
\begin{equation}\label{eq:stability_alpha}
\lambda p_n\ll (1-\alpha)^{n} \ \Leftrightarrow \ \ell<1-\alpha .
\end{equation}
It is possible that the feasibility condition is met but $p_n\to (1-\alpha)$ (from below) and then condition \eqref{eq:stability_alpha} is not met.

Let $\ell_n(\alpha):=L_{n-1}\left(\alpha(1-\alpha)^{n-1}\right)$ and $\ell(\alpha)=\lim_{n\to\infty}\ell_n(\alpha)$. In Figure \ref{fig:l_alpha} the sequence of functions $\ell_n(\alpha)$ are illustrated for the case of Poisson arrivals and $\lambda=\rho=0.2$. There are several interesting observations to be made from this figure, all of which are robust for different values of $\rho$ and other external inter-arrival distributions. For every $n\geq 2$, the function $\ell_n(\alpha)$ is unimodal (attaining a minimum) and $\ell_n(0)=\ell_n(1)=1$. Furthermore, the slope of the functions at $\ell_n(0)$ is decreasing with $n$, which can be verified by recalling that the derivative of the LST at zero equals the negative of the overflow expectation given by \eqref{eq:ET_n}: $\E T_n=\frac{1}{\lambda p_n}$ (which goes to $-\infty$ as $n\to\infty$ and explains why there seems to be a downwards discontinuity at zero as $n$ gets large). This implies that for every $n\geq 1$ there exists an $\alpha\in(0,1)$ such that $\ell_n(\alpha)<1-\alpha$. It appears that this is the case also for the limit $\ell(\alpha)$, which implies the finite delay condition \eqref{eq:stability_alpha}, but we currently have no proof to this effect. A proof of this would resolve the open question described in Remark \ref{remark:FD_exist} in the previous section. The limiting function $\ell(\alpha)$ appears to have an invariance region, in which the value of the function is almost constant with respect to $\alpha$, specifically: the function starts at $\ell(0)=1$, sharply decreases after zero, has an interval $\alpha\in(0,\overline{\alpha})$ which it is almost constant $\ell(\alpha)\eqsim\overline{\ell}$, and then sharply increases back to $\ell(\alpha)\eqsim 1$ for $\alpha\in[\overline{\alpha},1]$. In the case of a Poisson arrival process we observe that $\overline{\ell}=\rho$ and $\overline{\alpha}=1-\sqrt{\rho}$. The latter value is the explicit solution of $\ell_1(\alpha)=\ell_2(\alpha)$, i.e.\ the $\alpha$ value where the first and second functions intersect. Interestingly, it appears that all of the functions intersect at around the same point. It is hard to tell whether the limiting function $\ell(\alpha)$ would have an upward discontinuity to $1$ at $\overline{\alpha}$ or just a sharp and continuous increase as we see for $n=25$. We were unable to computationally explore the function for higher values.

\begin{figure}[H]
\centering
\begin{tikzpicture}[xscale=9,yscale=9]
  \def\xmin{0}
  \def\xmax{1.05}
  \def\ymin{0}
  \def\ymax{1.01}
    \draw[->] (\xmin,\ymin) -- (\xmax,\ymin) node[right] {$\alpha$} ;
    \draw[->] (\xmin,\ymin) -- (\xmin,\ymax) node[above] {$\ell_n(\alpha)$} ;
    \foreach \x in {0,0.1,0.2,0.3,0.4,0.5,0.6,0.7,0.8,0.9,1}
    \node at (\x,\ymin) [below] {\x};
    \foreach \y in {0,0.5,1}
    \node at (\xmin,\y) [left] {\y};
    \node at (\xmin,0.14) [left] {\scriptsize{$L_0(\mu)$}};
    \node at (\xmin,0.22) [left] {\scriptsize{$\rho$}};
    
     \draw[smooth,loosely dashed, thick ] (\xmin,0.166667)--(1,0.166667);
     \draw[smooth,loosely dashed, thick ] (\xmin,0.2)--(1,0.2);
    
    \draw[densely dashed,green] (0,1)--	(0.01,0.952381)--	(0.02,0.9090909)--	(0.03,0.8695652)--	(0.04,0.8333333)--	(0.05,0.8)--	(0.06,0.7692308)--	(0.07,0.7407407)--	(0.08,0.7142857)--	(0.09,0.6896552)--	(0.1,0.6666667)--	(0.11,0.6451613)--	(0.12,0.625)--	(0.13,0.6060606)--	(0.14,0.5882353)--	(0.15,0.5714286)--	(0.16,0.5555556)--	(0.17,0.5405405)--	(0.18,0.5263158)--	(0.19,0.5128205)--	(0.2,0.5)--	(0.21,0.4878049)--	(0.22,0.4761905)--	(0.23,0.4651163)--	(0.24,0.4545455)--	(0.25,0.4444444)--	(0.26,0.4347826)--	(0.27,0.4255319)--	(0.28,0.4166667)--	(0.29,0.4081633)--	(0.3,0.4)--	(0.31,0.3921569)--	(0.32,0.3846154)--	(0.33,0.3773585)--	(0.34,0.3703704)--	(0.35,0.3636364)--	(0.36,0.3571429)--	(0.37,0.3508772)--	(0.38,0.3448276)--	(0.39,0.3389831)--	(0.4,0.3333333)--	(0.41,0.3278689)--	(0.42,0.3225806)--	(0.43,0.3174603)--	(0.44,0.3125)--	(0.45,0.3076923)--	(0.46,0.3030303)--	(0.47,0.2985075)--	(0.48,0.2941176)--	(0.49,0.2898551)--	(0.5,0.2857143)--	(0.51,0.2816901)--	(0.52,0.2777778)--	(0.53,0.2739726)--	(0.54,0.2702703)--	(0.55,0.2666667)--	(0.56,0.2631579)--	(0.57,0.2597403)--	(0.58,0.2564103)--	(0.59,0.2531646)--	(0.6,0.25)--	(0.61,0.2469136)--	(0.62,0.2439024)--	(0.63,0.2409639)--	(0.64,0.2380952)--	(0.65,0.2352941)--	(0.66,0.2325581)--	(0.67,0.2298851)--	(0.68,0.2272727)--	(0.69,0.2247191)--	(0.7,0.2222222)--	(0.71,0.2197802)--	(0.72,0.2173913)--	(0.73,0.2150538)--	(0.74,0.212766)--	(0.75,0.2105263)--	(0.76,0.2083333)--	(0.77,0.2061856)--	(0.78,0.2040816)--	(0.79,0.2020202)--	(0.8,0.2)--	(0.81,0.1980198)--	(0.82,0.1960784)--	(0.83,0.1941748)--	(0.84,0.1923077)--	(0.85,0.1904762)--	(0.86,0.1886792)--	(0.87,0.1869159)--	(0.88,0.1851852)--	(0.89,0.1834862)--	(0.9,0.1818182)--	(0.91,0.1801802)--	(0.92,0.1785714)--	(0.93,0.1769912)--	(0.94,0.1754386)--	(0.95,0.173913)--	(0.96,0.1724138)--	(0.97,0.1709402)--	(0.98,0.1694915)--	(0.99,0.1680672)--	(1,0.168);

    \draw[dotted,blue] (0,1)--	(0.01,0.9506984)--	(0.02,0.9034033)--	(0.03,0.8586987)--	(0.04,0.8168501)--	(0.05,0.7779197)--	(0.06,0.7418449)--	(0.07,0.7084919)--	(0.08,0.6776894)--	(0.09,0.6492504)--	(0.1,0.622986)--	(0.11,0.5987133)--	(0.12,0.5762593)--	(0.13,0.5554638)--	(0.14,0.5361797)--	(0.15,0.5182731)--	(0.16,0.5016225)--	(0.17,0.4861183)--	(0.18,0.4716618)--	(0.19,0.4581639)--	(0.2,0.4455446)--	(0.21,0.4337317)--	(0.22,0.4226604)--	(0.23,0.4122722)--	(0.24,0.4025141)--	(0.25,0.3933386)--	(0.26,0.3847026)--	(0.27,0.3765669)--	(0.28,0.3688963)--	(0.29,0.3616586)--	(0.3,0.3548248)--	(0.31,0.3483681)--	(0.32,0.3422646)--	(0.33,0.3364921)--	(0.34,0.3310306)--	(0.35,0.3258617)--	(0.36,0.3209687)--	(0.37,0.3163362)--	(0.38,0.3119502)--	(0.39,0.3077978)--	(0.4,0.3038674)--	(0.41,0.3001481)--	(0.42,0.2966302)--	(0.43,0.2933046)--	(0.44,0.2901632)--	(0.45,0.2871984)--	(0.46,0.2844036)--	(0.47,0.2817725)--	(0.48,0.2792996)--	(0.49,0.27698)--	(0.5,0.2748092)--	(0.51,0.2727832)--	(0.52,0.2708987)--	(0.53,0.2691528)--	(0.54,0.2675429)--	(0.55,0.2660671)--	(0.56,0.264724)--	(0.57,0.2635123)--	(0.58,0.2624317)--	(0.59,0.2614819)--	(0.6,0.2606635)--	(0.61,0.2599774)--	(0.62,0.2594252)--	(0.63,0.2590089)--	(0.64,0.2587315)--	(0.65,0.2585962)--	(0.66,0.2586075)--	(0.67,0.2587704)--	(0.68,0.259091)--	(0.69,0.2595764)--	(0.7,0.2602348)--	(0.71,0.2610761)--	(0.72,0.2621114)--	(0.73,0.2633539)--	(0.74,0.2648186)--	(0.75,0.2665234)--	(0.76,0.2684886)--	(0.77,0.2707385)--	(0.78,0.273301)--	(0.79,0.2762095)--	(0.8,0.2795031)--	(0.81,0.2832285)--	(0.82,0.2874417)--	(0.83,0.29221)--	(0.84,0.2976157)--	(0.85,0.3037593)--	(0.86,0.310766)--	(0.87,0.3187927)--	(0.88,0.3280389)--	(0.89,0.3387625)--	(0.9,0.3513022)--	(0.91,0.366112)--	(0.92,0.3838143)--	(0.93,0.4052866)--	(0.94,0.4318063)--	(0.95,0.4653076)--	(0.96,0.5088659)--	(0.97,0.5676848)--	(0.98,0.6513213)--	(0.99,0.7794425)--	(1,1);

      \draw[densely dashed,red] (0,1)--	(0.01,0.9448419)--	(0.02,0.8817015)--	(0.03,0.8152662)--	(0.04,0.7501043)--	(0.05,0.6893876)--	(0.06,0.6347124)--	(0.07,0.5864956)--	(0.08,0.5444608)--	(0.09,0.5080059)--	(0.1,0.4764279)--	(0.11,0.4490386)--	(0.12,0.4252174)--	(0.13,0.4044264)--	(0.14,0.3862104)--	(0.15,0.370188)--	(0.16,0.3560417)--	(0.17,0.3435074)--	(0.18,0.3323652)--	(0.19,0.3224316)--	(0.2,0.3135528)--	(0.21,0.3055995)--	(0.22,0.2984626)--	(0.23,0.2920497)--	(0.24,0.2862818)--	(0.25,0.2810918)--	(0.26,0.2764219)--	(0.27,0.2722224)--	(0.28,0.2684505)--	(0.29,0.2650691)--	(0.3,0.2620461)--	(0.31,0.2593536)--	(0.32,0.2569674)--	(0.33,0.2548667)--	(0.34,0.2530333)--	(0.35,0.2514516)--	(0.36,0.2501082)--	(0.37,0.2489918)--	(0.38,0.2480929)--	(0.39,0.2474037)--	(0.4,0.246918)--	(0.41,0.2466312)--	(0.42,0.24654)--	(0.43,0.2466426)--	(0.44,0.2469388)--	(0.45,0.2474297)--	(0.46,0.2481178)--	(0.47,0.2490074)--	(0.48,0.2501043)--	(0.49,0.2514162)--	(0.5,0.2529527)--	(0.51,0.2547255)--	(0.52,0.256749)--	(0.53,0.2590399)--	(0.54,0.2616184)--	(0.55,0.264508)--	(0.56,0.2677364)--	(0.57,0.271336)--	(0.58,0.2753446)--	(0.59,0.2798066)--	(0.6,0.2847737)--	(0.61,0.2903067)--	(0.62,0.2964768)--	(0.63,0.3033671)--	(0.64,0.3110756)--	(0.65,0.3197168)--	(0.66,0.3294254)--	(0.67,0.3403584)--	(0.68,0.3526992)--	(0.69,0.36666)--	(0.7,0.3824843)--	(0.71,0.4004472)--	(0.72,0.4208524)--	(0.73,0.444024)--	(0.74,0.4702887)--	(0.75,0.4999447)--	(0.76,0.5332132)--	(0.77,0.5701682)--	(0.78,0.6106479)--	(0.79,0.6541577)--	(0.8,0.6997927)--	(0.81,0.7462202)--	(0.82,0.7917651)--	(0.83,0.8346171)--	(0.84,0.8731272)--	(0.85,0.9061012)--	(0.86,0.9329859)--	(0.87,0.9538818)--	(0.88,0.9694007)--	(0.89,0.9804442)--	(0.9,0.9879912)--	(0.91,0.9929479)--	(0.92,0.996071)--	(0.93,0.9979488)--	(0.94,0.9990156)--	(0.95,0.9995789)--	(0.96,0.9998475)--	(0.97,0.9999577)--	(0.98,0.9999928)--	(0.99,0.9999996)--	(1,1);

    \draw[densely dotted,black](0,1)--	(0.01,0.9313838)--	(0.02,0.8224342)--	(0.03,0.6966364)--	(0.04,0.5866296)--	(0.05,0.5029612)--	(0.06,0.4416951)--	(0.07,0.3964939)--	(0.08,0.3624161)--	(0.09,0.3361206)--	(0.1,0.3154035)--	(0.11,0.2987926)--	(0.12,0.285281)--	(0.13,0.2741615)--	(0.14,0.2649248)--	(0.15,0.2571959)--	(0.16,0.2506923)--	(0.17,0.245198)--	(0.18,0.2405445)--	(0.19,0.2365988)--	(0.2,0.2332546)--	(0.21,0.2304257)--	(0.22,0.2280419)--	(0.23,0.2260453)--	(0.24,0.2243877)--	(0.25,0.2230291)--	(0.26,0.2219359)--	(0.27,0.2210797)--	(0.28,0.2204367)--	(0.29,0.2199868)--	(0.3,0.2197131)--	(0.31,0.2196015)--	(0.32,0.2196402)--	(0.33,0.2198198)--	(0.34,0.2201325)--	(0.35,0.2205725)--	(0.36,0.2211355)--	(0.37,0.2218188)--	(0.38,0.2226211)--	(0.39,0.2235427)--	(0.4,0.2245853)--	(0.41,0.2257523)--	(0.42,0.2270486)--	(0.43,0.2284809)--	(0.44,0.2300579)--	(0.45,0.2317906)--	(0.46,0.2336924)--	(0.47,0.23578)--	(0.48,0.2380736)--	(0.49,0.2405979)--	(0.5,0.2433831)--	(0.51,0.2464663)--	(0.52,0.2498936)--	(0.53,0.2537222)--	(0.54,0.2580247)--	(0.55,0.2628935)--	(0.56,0.2684484)--	(0.57,0.2748469)--	(0.58,0.2822995)--	(0.59,0.2910928)--	(0.6,0.301624)--	(0.61,0.3144544)--	(0.62,0.3303907)--	(0.63,0.3506086)--	(0.64,0.3768276)--	(0.65,0.4115238)--	(0.66,0.4580563)--	(0.67,0.5202795)--	(0.68,0.6006668)--	(0.69,0.696066)--	(0.7,0.7937596)--	(0.71,0.8758926)--	(0.72,0.9321922)--	(0.73,0.964998)--	(0.74,0.9823219)--	(0.75,0.9910825)--	(0.76,0.9954699)--	(0.77,0.9976799)--	(0.78,0.9988048)--	(0.79,0.9993829)--	(0.8,0.999682)--	(0.81,0.9998371)--	(0.82,0.9999175)--	(0.83,0.9999588)--	(0.84,0.9999799)--	(0.85,0.9999904)--	(0.86,0.9999956)--	(0.87,0.999998)--	(0.88,0.9999992)--	(0.89,0.9999997)--	(0.9,0.9999999)--	(0.91,1)--	(0.92,1)--	(0.93,1)--	(0.94,1)--	(0.95,1)--	(0.96,1)--	(0.97,1)--	(0.98,1)--	(0.99,1)--	(1,1);

     \draw[dashdotted,orange] (0,1)--	(0.01,0.8773251)--	(0.02,0.6045782)--	(0.03,0.4433516)--	(0.04,0.362446)--	(0.05,0.3153774)--	(0.06,0.285186)--	(0.07,0.2645996)--	(0.08,0.2499853)--	(0.09,0.2393241)--	(0.1,0.2314022)--	(0.11,0.2254458)--	(0.12,0.2209378)--	(0.13,0.2175195)--	(0.14,0.2149348)--	(0.15,0.2129963)--	(0.16,0.2115641)--	(0.17,0.2105322)--	(0.18,0.2098196)--	(0.19,0.2093633)--	(0.2,0.2091147)--	(0.21,0.2090355)--	(0.22,0.2090961)--	(0.23,0.2092731)--	(0.24,0.2095485)--	(0.25,0.2099084)--	(0.26,0.2103423)--	(0.27,0.2108421)--	(0.28,0.2114023)--	(0.29,0.212019)--	(0.3,0.2126896)--	(0.31,0.2134133)--	(0.32,0.2141898)--	(0.33,0.2150201)--	(0.34,0.2159061)--	(0.35,0.2168502)--	(0.36,0.2178558)--	(0.37,0.218927)--	(0.38,0.2200687)--	(0.39,0.2212867)--	(0.4,0.2225875)--	(0.41,0.2239788)--	(0.42,0.2254695)--	(0.43,0.2270698)--	(0.44,0.2287917)--	(0.45,0.2306488)--	(0.46,0.2326574)--	(0.47,0.2348367)--	(0.48,0.2372094)--	(0.49,0.2398031)--	(0.5,0.2426512)--	(0.51,0.2457953)--	(0.52,0.2492874)--	(0.53,0.2531945)--	(0.54,0.2576044)--	(0.55,0.2626358)--	(0.56,0.2684555)--	(0.57,0.275308)--	(0.58,0.2835742)--	(0.59,0.2938961)--	(0.6,0.3074777)--	(0.61,0.3269357)--	(0.62,0.3592813)--	(0.63,0.4295247)--	(0.64,0.6426961)--	(0.65,0.9501217)--	(0.66,0.9966039)--	(0.67,0.9996207)--	(0.68,0.9999377)--	(0.69,0.9999869)--	(0.7,0.9999968)--	(0.71,0.9999991)--	(0.72,0.9999997)--	(0.73,0.9999999)--	(0.74,1)--	(0.75,1)--	(0.76,1)--	(0.77,1)--	(0.78,1)--	(0.79,1)--	(0.8,1)--	(0.81,1)--	(0.82,1)--	(0.83,1)--	(0.84,1)--	(0.85,1)--	(0.86,1)--	(0.87,1)--	(0.88,1)--	(0.89,1)--	(0.9,1)--	(0.91,1)--	(0.92,1)--	(0.93,1)--	(0.94,1)--	(0.95,1)--	(0.96,1)--	(0.97,1)--	(0.98,1)--	(0.99,1)--	(1,1);
     
      \draw[smooth,purple] (0,1)--	(0.01,0.8255251)--	(0.02,0.5089066)--	(0.03,0.3784658)--	(0.04,0.3155066)--	(0.05,0.2794492)--	(0.06,0.2567722)--	(0.07,0.241683)--	(0.08,0.2312803)--	(0.09,0.2239482)--	(0.1,0.2187146)--	(0.11,0.2149602)--	(0.12,0.2122723)--	(0.13,0.2103668)--	(0.14,0.2090428)--	(0.15,0.2081556)--	(0.16,0.2075993)--	(0.17,0.207296)--	(0.18,0.2071876)--	(0.19,0.2072309)--	(0.2,0.2073937)--	(0.21,0.2076521)--	(0.22,0.2079884)--	(0.23,0.2083898)--	(0.24,0.2088469)--	(0.25,0.2093534)--	(0.26,0.2099048)--	(0.27,0.2104987)--	(0.28,0.2111338)--	(0.29,0.2118097)--	(0.3,0.2125272)--	(0.31,0.2132876)--	(0.32,0.2140929)--	(0.33,0.2149457)--	(0.34,0.215849)--	(0.35,0.2168066)--	(0.36,0.2178226)--	(0.37,0.2189017)--	(0.38,0.2200495)--	(0.39,0.2212721)--	(0.4,0.2225764)--	(0.41,0.2239704)--	(0.42,0.2254631)--	(0.43,0.2270649)--	(0.44,0.2287879)--	(0.45,0.2306458)--	(0.46,0.2326551)--	(0.47,0.2348348)--	(0.48,0.2372078)--	(0.49,0.2398017)--	(0.5,0.2426499)--	(0.51,0.245794)--	(0.52,0.249286)--	(0.53,0.253193)--	(0.54,0.2576028)--	(0.55,0.2626344)--	(0.56,0.2684554)--	(0.57,0.2753128)--	(0.58,0.2835968)--	(0.59,0.2939825)--	(0.6,0.3078125)--	(0.61,0.3284095)--	(0.62,0.3677338)--	(0.63,0.5111726)--	(0.64,0.9736529)--	(0.65,0.9996681)--	(0.66,0.9999843)--	(0.67,0.9999985)--	(0.68,0.9999998)--	(0.69,1)--	(0.7,1)--	(0.71,1)--	(0.72,1)--	(0.73,1)--	(0.74,1)--	(0.75,1)--	(0.76,1)--	(0.77,1)--	(0.78,1)--	(0.79,1)--	(0.8,1)--	(0.81,1)--	(0.82,1)--	(0.83,1)--	(0.84,1)--	(0.85,1)--	(0.86,1)--	(0.87,1)--	(0.88,1)--	(0.89,1)--	(0.9,1)--	(0.91,1)--	(0.92,1)--	(0.93,1)--	(0.94,1)--	(0.95,1)--	(0.96,1)--	(0.97,1)--	(0.98,1)--	(0.99,1)--	(1,1);  
      
      \node[draw=none,,color=green] at (0.45,0.35) {\scriptsize{$\ell_1(\alpha)$}};
    \node[draw=none,,color=blue] at (0.9,0.5) {\scriptsize{$\ell_2(\alpha)$}};
    \node[draw=none,,color=red] at (0.85,0.65) {\scriptsize{$\ell_{5}(\alpha)$}};
    \node[draw=none,,color=black] at (0.75,0.78) {\scriptsize{$\ell_{10}(\alpha)$}};
    \node[draw=none,,color=orange] at (0.59,0.72) {\scriptsize{$\ell_{20}(\alpha)$}};
    \node[draw=none,,color=purple] at (0.58,0.87) {\scriptsize{$\ell_{25}(\alpha)$}};
    
        \draw[loosely dashed, domain=0:1] plot (\x, {1-\x});
            \node[draw=none,,color=black] at (0.31,0.75) {\scriptsize{$1-\alpha$}};
            
        \draw[smooth,dashed] (0.553,0)--(0.553,0.265);
        \node[draw=none,color=black, thick] at (0.67,0.1) {\scriptsize{$\alpha=1-\sqrt{\rho}$}};
        \draw [->] (0.62,0.08)--(0.56,0.01);
     
\end{tikzpicture}
\caption{The sequence of functions $\ell_n(\alpha):=L_{n-1}\left(\alpha(1-\alpha)^{n-1}\right)$ when the service-rate sequence is geometric with decay $1-\alpha$: $\mu_n=\alpha(1-\alpha)^{n-1}$. The system parameters are $\mu=1$ and Poisson arrivals with rate $\lambda=\rho=0.2$.}\label{fig:l_alpha}
\end{figure}
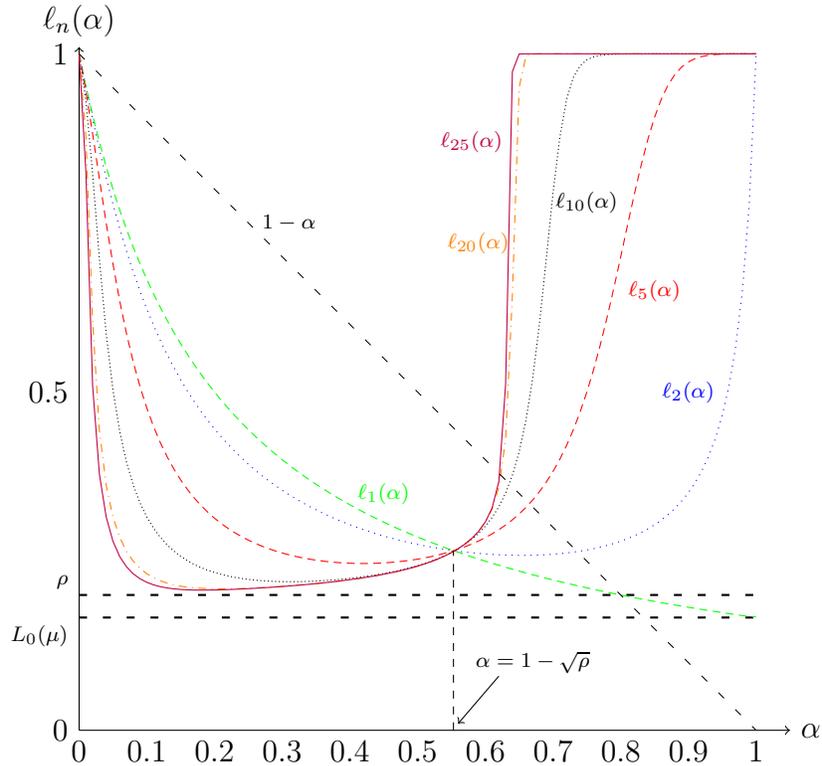

The invariance of the limit function $\ell(\alpha)$ also has implications on the delay-minimization problem: if $\ell(\alpha)=\ell$ for all $\alpha\in(0,\overline{\alpha})$ then the tail of the delay minimization objective function has a very simple form, $\frac{\ell^n}{(1-\alpha)^n}$, and the optimal $\alpha$ can be computed as described below. 

Suppose now that the blocking probability for all $n\geq 1$ is $p_n=\ell^n$, and consequently $q_n=(1-\ell)\ell^{n-1}$, where $\ell<1$. We already established in Lemma \ref{lemma:L_limit}b and Proposition \ref{prop:equivalence} that this is a reasonable approximation for the tail behaviour of the expected delay for any feasible service with finite delay. In the sequel (specifically in Proposition \ref{prop:l_invariance}) we will also show that if $\{\mu_n\}_{n=1}^\infty\in\mathcal{FD}$ then $\ell$ has a certain degree of invariance to the tail of $\{\mu_n\}_{n=1}^\infty$, thus providing additional justification for the use of approximation of the optimal solution with a fixed $\ell$. The optimal service-rate sequence for such a system is the solution to an infinite dimensional convex program on a simplex: 
\begin{equation}\label{CP:tail}
\begin{split}
\underset{\{\mu_n\}_{n=1}^\infty\in\mathcal{M}}{\text{min}} & \frac{1-\ell}{\ell}\sum_{n=1}^\infty \frac{\ell^n}{\mu_n} \\
\text{s.t.} & \left\lbrace \mu_n> 0, \ \forall n\geq 1, \  \sum_{n=1}^{\infty} \mu_n=1 \right\rbrace.
\end{split}
\end{equation}

We refer to \eqref{CP:tail} as the Tail Approximation Program (TAP). The following proposition asserts that the solution to the TAP is in $\mathcal{M}_g\cap\mathcal{M}$ with $\alpha=(1-\sqrt{\ell})$. This solution resembles the square-root optimal capacity allocation in a Jackson network (see \cite{book_K1976}, p. 329)\footnote{This observation was made by Johan van Leeuwaarden.}, but there is no direct link between the models. The program is a convex infinite horizon program, in the sense of \cite{G1983} (for general optimality conditions see \cite{book_BP2012}, p. 153), which allows us to find the optimal solution as a limit of finite dimensional programs. 

\begin{proposition}\label{prop:TAP}
The solution to \eqref{CP:tail} is $\mu_n=(1-\ell^\frac{1}{2})\ell^\frac{n-1}{2}$, $\forall n\geq 1$.
\end{proposition}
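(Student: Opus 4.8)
The plan is to recognise \eqref{CP:tail} as a convex program over (a subset of) the unit simplex and to identify its minimizer by a single application of the Cauchy--Schwarz inequality, whose equality case pins the optimizer down uniquely; this route avoids any compactness or attainment argument. Write the objective as $\tfrac{1-\ell}{\ell}\sum_{n\ge1}\ell^n/\mu_n$: each term $\ell^n/\mu_n$ is strictly convex on $\{\mu_n>0\}$, and the feasible set $\mathcal{M}$ is contained in the simplex $\Delta:=\{\{\mu_n\}_{n=1}^\infty:\mu_n>0,\ \sum_n\mu_n=1\}$.

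The key steps, in order. (i) For any $\{\mu_n\}\in\Delta$, Cauchy--Schwarz with $a_n=\ell^{n/2}/\sqrt{\mu_n}$ and $b_n=\sqrt{\mu_n}$ gives $\big(\sum_{n\ge1}\ell^{n/2}\big)^2\le\big(\sum_{n\ge1}\ell^n/\mu_n\big)\big(\sum_{n\ge1}\mu_n\big)=\sum_{n\ge1}\ell^n/\mu_n$, using $\sum_n\mu_n=1$; since $\ell<1$ the left side equals $\ell/(1-\sqrt\ell)^2<\infty$, so every feasible point has objective value at least $\tfrac{1-\ell}{\ell}\cdot\tfrac{\ell}{(1-\sqrt\ell)^2}=\tfrac{1+\sqrt\ell}{1-\sqrt\ell}$. (ii) Equality forces $\ell^{n/2}/\mu_n$ constant, i.e.\ $\mu_n\propto\ell^{n/2}$; normalizing with $\sum_{n\ge1}\ell^{n/2}=\sqrt\ell/(1-\sqrt\ell)$ yields $\mu_n^\ast=(1-\sqrt\ell)\ell^{(n-1)/2}$, and substituting back confirms $\mu^\ast\in\Delta$ attains the bound, so it is the unique minimizer over $\Delta$. (iii) It remains to check $\mu^\ast\in\mathcal{M}$, for then minimizing over the smaller set $\mathcal{M}\subseteq\Delta$ gives the same minimizer: here $1-\alpha=\sqrt\ell$ (with $\alpha=\mu_1^\ast=1-\sqrt\ell$), so $\sum_{i>n}\mu_i^\ast=\ell^{n/2}$, and under the modelling assumption $p_n=\ell^n$ in force for the TAP, condition \eqref{eq:stability} reads $\lambda\ell^n<\ell^{n/2}$, i.e.\ $\lambda\ell^{n/2}<1$, which holds for all $n$ since $\lambda<1$ and $\ell<1$; this also places $\mu^\ast$ in $\mathcal{M}_g$ via the parametrization $\alpha=1-\sqrt\ell$.

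I expect the only point needing care to be the infinite-dimensional bookkeeping, and the Cauchy--Schwarz route is chosen precisely because it disposes of it: the inequality is valid termwise for infinite sums (trivially so when the right side is $+\infty$), it is attained at an explicit feasible allocation, and its equality characterization gives uniqueness for free --- so there is no need to invoke the infinite-horizon duality of \cite{G1983} or to run a limiting argument over truncations $\min\{\sum_{n=1}^N\ell^n/\mu_n:\mu_n>0,\ \sum_{n=1}^N\mu_n=1\}$, whose optima $\mu_n^{(N)}=\ell^{n/2}/\sum_{k\le N}\ell^{k/2}$ would then have to be shown to converge to $\mu^\ast$ with matching values. The remaining obstacle, such as it is, is the feasibility verification $\mu^\ast\in\mathcal{M}$, which is the one place the stability analysis of Section \ref{sec:stability} is actually used, but it is routine given the assumptions in force.
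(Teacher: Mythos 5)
Your proof is correct and takes a genuinely different route from the paper's. The paper first establishes monotonicity of the optimizer by an interchange argument, then solves the truncated $M$-dimensional program via a Lagrange multiplier and first-order conditions, and passes to the limit $M\to\infty$ at the end. You replace all of that with a single Cauchy--Schwarz inequality $\bigl(\sum_n \ell^{n/2}\bigr)^2 \le \bigl(\sum_n \ell^n/\mu_n\bigr)\bigl(\sum_n \mu_n\bigr)$, whose equality case immediately forces $\mu_n \propto \ell^{n/2}$ and whose validity for infinite series (including when the objective diverges) is automatic --- so you never need the interchange lemma, the truncation, or the limiting argument. The computations check out: the normalizer $\sum_{n\geq1}\ell^{n/2}=\sqrt\ell/(1-\sqrt\ell)$ gives $\mu_n^*=(1-\sqrt\ell)\ell^{(n-1)/2}$ and the attained value $\tfrac{1+\sqrt\ell}{1-\sqrt\ell}$. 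Your approach buys two things the paper's does not make explicit: uniqueness of the minimizer comes for free from the Cauchy--Schwarz equality characterization, and the passage to infinitely many variables requires no justification (the paper simply writes ``by taking $M\to\infty$ we conclude'' without verifying that the truncated optima converge to an optimum of the infinite program). You also add the routine but welcome check that $\mu^*\in\mathcal{M}$ under the TAP surrogate $p_n=\ell^n$, which the paper's proof omits. The only thing the paper's truncation route offers in exchange is that it exhibits the finite-dimensional optimizers $\mu_n^{(M)}$ explicitly, which some readers may find informative, but it is strictly more machinery for the same conclusion.
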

\begin{proof}
First of all we argue that the optimal service-rate sequence is non-increasing by applying a simple interchange argument. Suppose that $\{\mu_n\}_{n=1}^\infty$ is an optimal service-rate sequence such that $\mu_i<\mu_j$ for some $i<j$. The contribution of elements $i$ and $j$ to the objective function is
\[
\frac{\ell^i}{\mu_i}+\frac{\ell^j}{\mu_j}.
\]
If $i<j$ then $\ell^i>\ell^j$, which means that a greater weight is given to $\frac{1}{\mu_i}$ which is bigger than $\frac{1}{\mu_j}$. Hence, we can improve the objective without deviating from the capacity constraint by switching the values of $\mu_i$ and $\mu_j$. This contradicts the assumption that the sequence is optimal.

The objective function is an infinite sum of convex single-variable functions. We first consider the finite program for an integer $M$,
\[
\begin{split}
\underset{\{\mu_n\}_{n=1}^M\in\mathcal{M}}{\text{min}} & \frac{1-\ell}{\ell}\sum_{n=1}^M \frac{\ell^n}{\mu_n} \\
\text{s.t.} & \left\lbrace \mu_n\geq 0, \ \forall n\geq 1, \  \sum_{n=1}^{M} \mu_n=1 \right\rbrace.
\end{split}
\]
Every element of the objective function is unbounded as $\mu_n\to 0$ and therefore the solution is in the interior of the constraint set. This means that every element satisfies the first-order condition
\[
\frac{\ell^n}{\mu_n^2}=\kappa,\quad 1\leq n\leq M
\]
where $\kappa$ is the Lagrange multiplier for the equality constraint
\[
-\kappa\left(\sum_{n=1}^{M} \mu_n-1\right)=0.
\]
Simple algebra then yields
\[
\mu_n=\frac{1}{\sqrt{\kappa}}\ell^\frac{n}{2},
\]
and by applying the capacity constraint,
\[
\sum_{n=1}^M \frac{1}{\sqrt{\kappa}}\ell^\frac{n}{2}=1,
\]
we derive that $\sqrt{\kappa}=\frac{\ell^\frac{1}{2}\big(1-\ell^\frac{M}{2}\big)}{1-\ell^\frac{1}{2}}$.
Finally, by taking $M\to\infty$ we conclude that the optimal solution to \eqref{CP:tail} is $\mu_n=(1-\ell^\frac{1}{2})\ell^\frac{n-1}{2}$. 
\end{proof}

\section{Optimization and approximation}\label{sec:opt}
We are interested in solving the mathematical program,
\begin{equation}\label{MP}
\min_{\{\mu_n\}_{n=1}^\infty\in\mathcal{M}} \sum_{n=1}^\infty \frac{q_n}{\mu_n}.
\end{equation}

This program can be formulated as an infinite horizon Markov Decision Process with state and action dependent discount factor (see \cite{WG2011}). The idea is that at every step $n$ we consider a new system with inter-arrival distribution given by the overflows from server $n-1$ and the remaining capacity constraint. The discount factor at step $n$ will be given by $L_{n-1}(\mu_n)$, the blocking probability when a customer overflows to server $n$.

First we define the mapping
\[
\hat{L}(x,L)(s)=\frac{L(s+x)}{1-L(s)+L(s+x)}:\mathbb{R}\times\mathcal{L}\to\mathcal{L},
\]
where $\mathcal{L}$ is the space of non-increasing functions from $[0,\infty)$ to $[0,1]$. For any $n\geq 1$, given the overflow distribution $L_{n-1}$ we take advantage of the recursive form of $q_n$ in \eqref{eq:qn_recursion} to obtain
\[
q_{n+1}=\left(1-\hat{L}(\mu_n,L_{n-1})(\mu_{n+1})\right)p_{n},
\]
where by \eqref{eq:blocking},
\[
p_{n}=L_{n-1}(\mu_n)p_{n-1}.
\]
The objective function of \eqref{MP} can then be written as
\[
\begin{split}
& (1-L_0(\mu_1))\frac{1}{\mu_1}+L_0(\mu_1)\left(1-\hat{L}(\mu_{1},L_0)(\mu_2)\right)\frac{1}{\mu_2} \\
& +L_0(\mu_1)\hat{L}(\mu_{1},L_0)(\mu_2)\left(1-\hat{L}(\mu_{2},\hat{L}(\mu_{1},L_0)(\mu_2))(\mu_3)\right)\frac{1}{\mu_3}+\ldots\quad .
\end{split}
\]
Therefore, an equivalent program to \eqref{MP} is given by the Bellman equation
\begin{equation}\label{DP2}
v(\mu,L)=\min_{\{x\in[0,\mu]\}}\left\lbrace\big(1-L(x)\big)\frac{1}{x}+L(x)v\big(\mu-x,\hat{L}(x,L)\big)\right\rbrace,
\end{equation}
with the objective $v(\mu,L_0)$. In every step $\mu$ is the total available capacity and $L$ is the LST defining the external arrival process to the system. While \eqref{DP2} has an elegant form it is not straightforward to solve even numerically. This is due to the infinite dimensional state space $\mathcal{L}$, which is a space of continuous functions. We next suggest an equivalent program with a simpler state space that includes the server index and the capacities that have been allocated.

For any given exogenous arrival distribution $L_0$ we can compute the values of $L_n(s)$ given the sequence $\{\mu_1,\ldots,\mu_{n-1}\}$ using the recursive formula \eqref{eq:LST}. The program \eqref{DP2} with capacity constraint $\mu$ can then be defined by the Bellman equation
\begin{equation}\label{DP}
v_n(\mu_1,\ldots,\mu_{n-1})=\min_{\mu_n\leq \mu-s_{n-1}}\left\lbrace\frac{q_n}{\mu_n}+v_{n+1}(\mu_1,\ldots,\mu_n)\right\rbrace,\quad n\geq 1,
\end{equation}
where $s_n:=\sum_{i=1}^{n}\mu_i$. The overall objective is $v_1(\emptyset)$.

Unfortunately there is an additional problem of computational complexity. Specifically, computing $q_n$ requires computing the recursion for $L_{n-1}(\mu_n)$ which is of the magnitude of $2^n$ steps. In the sequel we propose a numerical approximation method that relies on the solution of \eqref{DP} for a small number of steps with the TAP solution \eqref{CP:tail} as an initial condition.

Observe that there is no direct restriction for the solution of \eqref{MP} to be non-increasing, which is necessary if customers always go to the fastest server available. We next argue that the optimal sequence is indeed non-increasing, even without the explicit constraint. In Proposition \ref{prop:TAP} the explicit geometric decay rate of the optimal service sequence was shown to be $\sqrt{\ell}$ for the approximation model, whereas in the general case we only know that it decreases but not at what rate.

\begin{lemma}\label{lemma:DP_decreasing}
The solution $\{\mu_n\}_{n=1}^\infty$ of \eqref{MP} is a non-increasing sequence. 
\end{lemma}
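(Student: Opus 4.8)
The plan is to prove that the unconstrained minimizer of \eqref{MP} is automatically non-increasing by an interchange argument analogous to the one used in the proof of Proposition \ref{prop:TAP}, but now accounting for the fact that the weights $q_n$ themselves depend on the $\mu_i$'s. First I would record the structural fact that, for a fixed \emph{set} of service rates assigned to servers $1,\dots,n$, the probabilities $p_j$ (for $j\le n$) and hence $q_j$ depend only on the multiset $\{\mu_1,\dots,\mu_n\}$ \emph{together with the order in which they are assigned}; and, crucially, that $p_n=\prod_{i=1}^n L_{i-1}(\mu_i)$ with the overflow LSTs built recursively. The key observation I would isolate is a monotonicity/ordering property of this product under a transposition of two adjacent rates: if we swap $\mu_k$ and $\mu_{k+1}$ when $\mu_k<\mu_{k+1}$, then $p_j$ is unchanged for $j<k$ and for $j\ge k+1$ (the terminal blocking probability of the first $k+1$ servers only sees the multiset, not the order, once we are past position $k+1$ --- this follows from iterating \eqref{eq:LST} as in \eqref{eq:Ln_product}, where $L_k(\mu_{k+1})$ for the relevant sub-chain is symmetric in $\mu_k,\mu_{k+1}$), while $p_k$ strictly increases. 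Consequently the only $q_j$ that change are $q_k=p_{k-1}-p_k$, which strictly decreases, and $q_{k+1}=p_k-p_{k+1}$, which strictly increases by the same amount $\delta:=\Delta p_k>0$.

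Given that, the argument mirrors Proposition \ref{prop:TAP}. Suppose $\{\mu_n\}$ is optimal but $\mu_k<\mu_{k+1}$ for some $k$ (by a standard bubble-sort reduction it suffices to treat an adjacent inversion). Swapping the two rates leaves the total allocated capacity and all other terms of the objective untouched, and changes the contribution of positions $k,k+1$ from
\[
\frac{q_k}{\mu_k}+\frac{q_{k+1}}{\mu_{k+1}}
\qquad\text{to}\qquad
\frac{q_k-\delta}{\mu_{k+1}}+\frac{q_{k+1}+\delta}{\mu_k},
\]
where I am writing $q_k,q_{k+1}$ for the pre-swap values. The change in the objective is
\[
\left(\frac{1}{\mu_k}-\frac{1}{\mu_{k+1}}\right)(q_{k+1}-q_k)\;-\;\delta\left(\frac{1}{\mu_k}-\frac{1}{\mu_{k+1}}\right)\cdot 0,
\]
after collecting terms one finds the difference equals $\big(\tfrac{1}{\mu_k}-\tfrac{1}{\mu_{k+1}}\big)\big(q_{k+1}+\delta-q_k+\delta\big)$ --- in any case, a product of two factors: $\tfrac{1}{\mu_k}-\tfrac{1}{\mu_{k+1}}>0$ since $\mu_k<\mu_{k+1}$, and a second factor that is the net increase of weight placed on the (now larger) coordinate $\tfrac{1}{\mu_k}$ relative to before, which is strictly positive because $q_{k+1}>q_k$ need not hold a priori but the \emph{reallocation} moves mass $\delta$ onto the heavier reciprocal and also redistributes the original $q_k,q_{k+1}$ onto a configuration with more weight on $\tfrac{1}{\mu_k}$. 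Making this sign bookkeeping precise --- showing the bracketed factor is strictly negative so that the swap strictly decreases the objective --- is the one place requiring care, and I would do it by writing the objective change as $\langle \text{(new weights)} - \text{(old weights)},\ (\tfrac{1}{\mu_{k+1}},\tfrac{1}{\mu_k})\rangle$ and checking that the weight vector moves in the ``wrong'' direction relative to the sorted reciprocal vector.

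Finally I would address the subtlety that \eqref{MP} is an infinite-dimensional problem, so ``optimal series'' and ``finitely many transpositions'' need justification: restrict to $\{\mu_n\}\in\mathcal{FD}\subseteq\mathcal{M}$ (by Proposition \ref{prop:equivalence}d the minimizer, if it yields finite delay, is feasible), note that any single adjacent transposition is a well-defined perturbation that keeps the series in $\mathcal{M}$ and strictly lowers a convergent objective, and conclude that no adjacent inversion can occur at an optimum; since a sequence with no adjacent inversions is non-increasing, the result follows. I expect the main obstacle to be the order-invariance claim for $p_j$ with $j>k$ after an adjacent swap --- i.e.\ verifying from the recursion \eqref{eq:LST}/\eqref{eq:Ln_product} that the overflow process out of the first $k+1$ servers depends only on $\{\mu_1,\dots,\mu_{k+1}\}$ as a set and not on the internal order of $\mu_k,\mu_{k+1}$ --- together with the accompanying sign computation; everything downstream is the same interchange bookkeeping already used for the TAP.
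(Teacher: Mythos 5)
Your proposal shares the high-level strategy of the paper's proof --- assume an adjacent inversion $\mu_k<\mu_{k+1}$, swap, and derive a contradiction --- but the two structural facts you build on are both wrong, and the second one is fatal.

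First, the sign of the change in $p_k$: after the swap, server $k$ carries the \emph{faster} rate $\mu_{k+1}$, so $p_k=L_{k-1}(\mu_k)\,p_{k-1}$ is replaced by $L_{k-1}(\mu_{k+1})\,p_{k-1}$, which is strictly \emph{smaller} by Lemma~\ref{lemma:LST_properties}a. You assert $p_k$ strictly increases; it strictly decreases, so your $\delta$ has the wrong sign already.

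Second, and more seriously, the order-invariance claim --- that $p_{k+1}$ (and hence $p_j$ for all $j\ge k+1$) depends only on the multiset $\{\mu_1,\dots,\mu_{k+1}\}$ --- is false. The very product form \eqref{eq:Ln_product} you cite in its support refutes it: the factor with $i=k$ and $n=k+1$ in that denominator is $1-L_{k-1}(\mu_{k+1})+L_{k-1}(\mu_k+\mu_{k+1})$, which is not symmetric under $\mu_k\leftrightarrow\mu_{k+1}$ unless $\mu_k=\mu_{k+1}$. A concrete check with Poisson arrivals ($L_0(s)=\frac{1}{1+s}$, $\lambda=1$): for $(\mu_1,\mu_2)=(1,2)$ one finds $p_2=\frac{1}{2}\cdot\frac{1/4}{1-1/3+1/4}=\frac{3}{22}$, while for $(\mu_1,\mu_2)=(2,1)$ one finds $p_2=\frac{1}{3}\cdot\frac{1/4}{1-1/2+1/4}=\frac{1}{9}\ne\frac{3}{22}$. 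Indeed the reference \cite{NE1981} that the paper's own proof leans on establishes exactly that blocking probabilities in ordered loss systems \emph{are} order-sensitive and decrease when a faster server is moved earlier --- the opposite of the invariance you need. Once that invariance goes, so does your bookkeeping: the swap changes $p_j$, and hence $q_j$, for \emph{every} $j\ge k$, so comparing only the two terms at positions $k$ and $k+1$ does not capture the change in the objective.

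The paper's proof takes a different and shorter route. It decomposes $\E S=\E(W\mid Y<n)\P(Y<n)+\E(W\mid Y\ge n)\P(Y\ge n)$, observes that the first summand and $\P(Y\ge n)=p_{n-1}$ depend only on $\mu_1,\ldots,\mu_{n-1}$ and are therefore untouched by the swap, and for the second summand invokes the monotonicity result of \cite{NE1981} that swapping to put the faster server first lowers \emph{all} the downstream blocking probabilities $p_k$, $k\ge n$. That external lemma plays precisely the role your (false) invariance claim was meant to play, and it is where the real content of the argument lives.
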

\begin{proof}
Suppose that $\{\mu_n\}_{n=1}^\infty$ is an optimal solution such that $\mu_n<\mu_{n+1}$ for some $n\geq 1$. The average expected delay is
\[
\E S = \E(S|Y<n)\P(Y<n)+\E(S|Y\geq n)\P(Y\geq n).
\]
If the rates of server $n$ and $n+1$ are interchanged then first summand is unchanged, while the second is decreased because all blocking probabilities $p_k$ for $k \geq n$ decrease (see \cite{NE1981}), thus, contradicting the optimality of the sequence. 
\end{proof}

A nice property of decreasing service-rate sequence is given to us by Lemma \ref{lemma:LST_properties}c, which states that the sequence of blocking properties $p_n$ is discrete convex:
\[
p_{n+1}+p_{n-1}>2p_n,\quad n\geq 2.
\] 
Recall that $p_n=\prod_{i=1}^nL_{i-1}(\mu_i)$, so in terms of the LST sequence this is equivalent to
\[
L_{n-1}(\mu_n)(1-L_{n}(\mu_{n+1}))<1-L_{n-1}(\mu_n),\quad n\geq 2,
\]
and thus
\[
q_{n+1}=(1-L_{n}(\mu_{n+1}))L_{n-1}(\mu_n)\prod_{i=1}^{n-1}L_{i-1}(\mu_i)<(1-L_{n-1}(\mu_n))\prod_{i=1}^{n-1}L_{i-1}(\mu_i)=q_n.
\]
This means that the sequence $q_{n}$ is decreasing, hence the weights of the increasing sequence of expected service times, $\frac{1}{\mu_n}$, in the objective function of \eqref{MP} is decreasing. 

\subsection{Approximate solution}\label{sec:opt_approx}
If $\ell<1$ then \eqref{eq:blocking} gives us a geometric approximation of the tail behaviour of the blocking probabilities $p_n\approx \ell^{n}$. Thus, for large $M$ we set
\[
q_n= p_M\ell^{n-(M+1)}(1-\ell),\ \forall n>M.
\]
Due to Proposition \ref{prop:TAP} we have that if the sequence $\ell_n$ does not vary by much then a good approximation for the optimal solution is given by a geometric service-rate sequence with decay rate $\sqrt{\ell}$. Specifically, the approximately optimal tail series is
\begin{equation}\label{eq:mu_tail}
\mu_n=\mu_{n-1}\sqrt{\ell}=\mu_M\ell^{\frac{n-M}{2}},\quad \forall n>M,
\end{equation}
and the approximate optimal residual is
\[
\sum_{n=M+1}^\infty \frac{q_n}{\mu_n}\approx \frac{p_M(1-\ell)}{\mu_M\ell}\sum_{n=1}^\infty \left(\frac{\ell}{\sqrt{\ell}}\right)^{n}=\frac{p_M(1-\ell)}{\mu_M\sqrt{\ell}(1-\sqrt{\ell})}=\frac{p_M(1+\sqrt{\ell})}{\mu_M\sqrt{\ell}}.
\]

For small values of $M$ ($\leq 25$) we can accurately compute $q_1,\ldots,q_M$ and approximate the optimal residual by using the TAP solution \eqref{CP:tail}. This yields the approximation
\[
\E S \approx \sum_{n=1}^M \frac{q_n}{\mu_n}+r_M,
\]
where
\[
r_M:=\frac{p_M(1+\sqrt{\ell_M})}{\mu_M\sqrt{\ell_M}},
\]
and $\ell_M:=L_{M-1}(\mu_M)$. The term $r_M$ represents the residual of the expected delay given by the tail approximation.

According to Proposition \ref{prop:equivalence}b, $\ell<1$ for any sequence with finite delay. A finite-horizon dynamic program that approximates \eqref{DP} can now be formulated: for $1\leq n\leq M$,
\begin{equation}\label{DP_approx}
v_n^{(M)}(\mu_1,\ldots,\mu_{n-1})=\min_{\mu_n\leq \mu-s_{n-1}}\left\lbrace\frac{q_n}{\mu_n}+v_{n+1}^{(M)}(\mu_1,\ldots,\mu_n)\right\rbrace,
\end{equation}
with initial condition $v^{(M)}_{M+1}(\mu_1,\ldots,\mu_M)=r_M$ and the objective $v_1^{(M)}(\mu)$. We can increase $M$ until $r_M$ is lower than some tolerance parameter, or alternatively until the change in the $L_{n-1}(\mu_n)$ sequence is smaller than some parameter.

The tail sequence $\{\mu_n\}_{n=M+1}^\infty$ needs to satisfy the capacity constraint,
\[
\sum_{n=M}^\infty \mu_m\leq \mu-s_{M-1} ,
\]
which according to \eqref{eq:mu_tail} yields
\begin{equation}\label{eq:opt_muM}
\mu_M\leq (1-\sqrt{\ell_M})(\mu-s_{M-1}).
\end{equation}

In an optimal allocation \eqref{eq:opt_muM} will have an equality, as there is no gain from not allocating all of the capacity. Lemma \ref{lemma:LST_properties}a implies that for every $(\mu_1,\ldots,\mu_{M-1})$ there is a unique $\mu_M$ for which an equality holds. Therefore, the dynamic program effectively only has $M-1$ steps. 

The implementation of the approximating dynamic program is obtained using a standard fixed point search algorithm. Let $\mu_0:=0$, and
\[
\mu_n^*(\mu_1,\ldots,\mu_{n-1})=\left\lbrace\begin{array}{cc}
\argmin_{\mu_1\leq \mu}\left\lbrace\frac{q_1}{\mu_1}+v_{2}^{(M)}(\mu_1)\right\rbrace, & n=1 \\
\argmin_{\mu_n\leq \mu-s_{n-1}}\left\lbrace\frac{q_n}{\mu_n}+v_{n+1}^{(M)}(\mu_1,\ldots,\mu_n)\right\rbrace, & 2 \leq  n<M, \\
(1-\sqrt{\ell_M})(\mu-s_{M-1}), & n=M,
\end{array}\right.
\]
for $n=1,\ldots,M$. Then $(\mu_1,\ldots,\mu_M)$ is an optimal solution if $\mu_n^*(\mu_1,\ldots,\mu_{n-1})=\mu_n$, for all $1\leq n\leq M$. Note that while the constraint at step $n$ depends only on the sum $s_{n-1}=\sum_{i=1}^n\mu_i$, the value function $v_{n}^{(M)}$ depends on the entire vector $(\mu_1,\ldots,\mu_{n-1})$ through the overflow distribution $L_n$. A simplified description of such an algorithm is as follows: 
\begin{enumerate}[label=(\arabic*)]
\item For $n=M,M-1,\ldots,1$  compute $\mu_n^*(\mu_1,\ldots,\mu_{n-1})$ for any allocation $(\mu_1\ldots,\mu_{n-1})$ and the corresponding value $v^{(M)}_{n}(\mu_1,\ldots,\mu_n^*(\mu_1,\ldots,\mu_{n-1}))$.
\item The vector
\[
 \left(\mu_1^*,\mu_2^*\left(\mu_1^*\right),\ldots,\mu_{M}^*\left(\mu_1^*,\ldots,\mu_{M-1}^*\left(\mu_1^*,\mu_2^*\left(\mu_1^*\right),\ldots\right)\right)\right),
\]
satisfies the fixed point condition and is an optimal solution.
\end{enumerate}
The most naive and exhaustive way to solve the approximate program is to compute the values for all possible $M$-dimensional allocations on a discrete grid with increments of size $\delta>0$. This is of course computationally expensive, in the magnitude of $\left(\frac{\mu}{\delta}\right)^M$. The search at any stage can be carried out in a much more efficient manner, such as bisection, and then the functions do not have to be evaluated at every point in the continuous search interval for every $\mu_n\in(0,\mu-s_{n-1})$. In practice, the search finds the optimal value in a very small number of computations using various generic optimization packages and the computational bottleneck is the evaluation of $L_n()$ for increasing $n$. 

In the TAP the blocking probability is assumed to decay at a constant rate, specifically the limit $\ell$. For this to be a good approximation the tail of the sequence $\ell_n:=L_{n-1}(\mu_n)$ needs to be somehow insensitive to changes in the tail of the service-rate sequence. This behaviour appeared in Observation 2 in Section \ref{sec:geometric} and was illustrated in Figure \ref{fig:l_alpha}. To strengthen the justification of this approximation we further show that the tail of any service-rate sequence with finite delay is decreasing and is bounded from below by an increasing sequence. 

\begin{proposition}\label{prop:l_invariance}
If $\{\mu_n\}_{n=1}^\infty\in\mathcal{FD}$ then the sequence $\{\ell_n\}$ has a decreasing tail, and is bounded from below by an increasing sequence $\{\underline{\ell}_n\}$.
\end{proposition}
\begin{proof}
According to Lemma \ref{lemma:L_limit} the LST sequence has a lower bound of $L_0(\mu)$, i.e.\ the external input LST with all of the capacity. This argument can be repeated for every overflow distribution $L_{n-1}$ into server $n$, when the remaining capacity is $\mu-s_{n-1}$. And so at any step $n$ of the dynamic program \eqref{DP_approx} we have
\[
\ell_n= L_{n-1}(\mu_n)\geq L_{n-1}(\mu-s_{n-1})=:\underline{\ell}_n.
\]
By \eqref{eq:LST},
\[
L_n(\mu-s_n)=\frac{L_{n-1}(\mu_n+\mu-s_n)}{1-L_{n-1}(\mu-s_n)+L_{n-1}(\mu_n+\mu-s_n)},
\]
and, as $\mu_n+\mu-s_n=\mu-s_{n-1}$, and the denominator is smaller than one,
\[
\underline{\ell}_{n+1}=L_n(\mu-s_n)> L_{n-1}(\mu-s_{n-1})=\underline{\ell}_{n}.
\]
Hence the sequence of lower bounds, $\underline{\ell}_n$, is increasing with $n$. Furthermore, if the sequence $\{\mu_n\}_{n=1}^\infty$ is non-increasing and has finite delay then the sequence $\frac{q_n}{\mu_n}$ converges to zero and is therefore decreasing at the tail. Using \eqref{eq:qn_recursion}, this implies that for large $n$:
\[
\frac{\mu_{n+1}(1-\ell_n)p_{n-1}}{\mu_n(1-\ell_{n+1})p_n}<1,
\]
which yields
\[
\frac{1-\ell_n}{1-\ell_{n+1}}<\frac{\mu_n}{\mu_{n+1}}\ell_n<1.
\]
The last inequality comes from the finite delay condition in Proposition \ref{prop:equivalence}c that demands that the decay of the service-rate sequence be slower than that of the blocking probabilities. We therefore conclude that $\ell_n>\ell_{n+1}$ at the tail. 
\end{proof}

To summarize, for any reasonable service-rate sequence, that is non-increasing and with finite delay, the sequence $\ell_n$ is decreasing and is also bounded from below by an increasing sequence. This shows that changing the service-rate sequence at the tail has a small, or bounded, effect on the limit $\ell$, as long as the finite delay condition is met. In Figure \ref{fig:Ld} the lower bound sequence is illustrated alongside the LST sequence for the approximat optimal solution for an example set of parameters. Indeed, $\ell_n$ approaches the lower bound sequence $\underline{\ell}_n$ very quickly. In this example we have that $\frac{\ell_{15}-\underline{\ell}_{15}}{\ell_{15}}\eqsim 0.025$, which indicates that the error term is very accurate even for $M=15$ (and at $M=19$ the normalized error is already smaller than $0.001$). For other examples similar behaviour was observed, and, as expected, for higher levels of $\rho$ a bigger $M$ is required to achieve good accuracy.

\begin{figure}[H]
\centering
\begin{tikzpicture}[xscale=0.6,yscale=7]
  \def\xmin{0.8}
  \def\xmax{16}
  \def\ymin{0.15}
  \def\ymax{0.65}
    \draw[->] (\xmin,\ymin) -- (\xmax,\ymin) node[right] {$n$} ;
    \draw[->] (\xmin,\ymin) -- (\xmin,\ymax); 
    \foreach \x in {1,2,3,4,5,6,7,8,9,10,11,12,13,14,15}
    \node at (\x,\ymin) [below] {\x};
    \foreach \y in {0.2,0.3,0.4,0.5,0.6}
    \node at (\xmin,\y) [left] {\y};
	
    \draw[red,densely dashed] (1,0.2857)--	(2,0.3103)--	(3,0.3338)--	(4,0.3557)--	(5,0.3759)--	(6,0.3942)--	(7,0.4105)--	(8,0.4247)--	(9,0.4369)--	(10,0.4473)--	(11,0.456)--	(12,0.4631)--	(13,0.469)--	(14,0.4737)--	(15,0.4774);
    
    \draw[blue] (1,0.5741)--	(2,0.563)--	(3,0.5528)--	(4,0.5435)--	(5,0.535)--	(6,0.5273)--	(7,0.5204)--	(8,0.5142)--	(9,0.5089)--	(10,0.5042)--	(11,0.5003)--	(12,0.497)--	(13,0.4942)--	(14,0.492)--	(15,0.4902);  
    
    \node[draw=none,color=red] at (11,0.38) {$\underline{\ell}_n$};
    \node[draw=none,color=blue] at (11,0.55) {$\ell_n$};
    
\end{tikzpicture}
 \caption{LST sequence (of the approximate optimal service-rate sequence) and the lower bound sequence. Example parameters: Total capacity of $\mu=1$ and Poisson arrivals with rate $\lambda=0.4$.}\label{fig:Ld}
\end{figure}
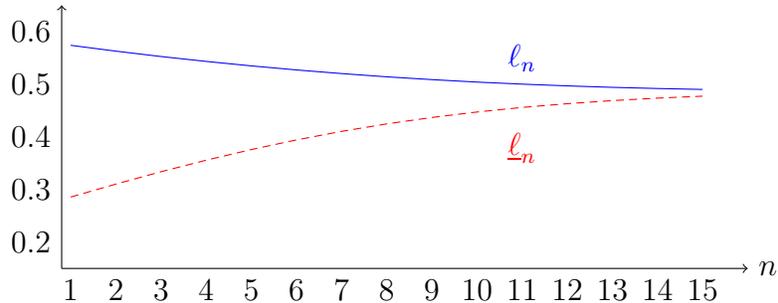

\section{Numerical analysis}\label{sec:numerical}
In this section we assume $\mu=1$ and that the external inter-arrival distribution is Gamma$(k,k\lambda)$. In this case the overall utilization is $\rho=\frac{k\lambda}{k}=\lambda$ and the variance of the exogenous inter-arrival times is $\frac{1}{k\lambda^2}=\frac{1}{k\rho^2}$. We can therefore examine different levels of utilization and variance by changing $\rho$ and $k$. For high levels of $\rho$ this analysis is quite general as the blocking probabilities in the heavy-traffic approximation of the GI/M/$c$ system with many servers are known to depend only on the first two moments of the arrival distribution (see \cite{W1984}).

Figure \ref{fig:opt} illustrates the approximate optimal service-rate sequence by solving \eqref{DP_approx} for different parameter values with $M=15$. The first thing to observe is that in all examples the approximate optimal service-rate sequence is very close to geometric. 

In Table \ref{tbl:opt} we present the approximate optimal values of $\E S$ and the respective tail approximations $r_M$. For high levels of $\rho$ the contribution of the tail approximation is substantial and hence potentially less accurate. However, we find that the sequence of $L_{n-1}(\mu_n)$ stabilizes very quickly and therefore the approximation $\ell\eqsim L_{M-1}(\mu_M)$ is quite accurate, suggesting the error terms provide a decent approximation. The sequence of LST of the approximate optimal service-rate sequence are illustrated in Figure \ref{fig:lst}. In all examples the sequence indeed stabilizes very fast, hence the tail approximation using the value of $\ell_M$ is appropriate. This stability result was further verified by running computations for higher values, exact up to $M=25$ and based on a discrete event simulation of the system for $M>25$, for a large number of servers using the approximate optimal service-rate sequence. In particular, the LST sequences, for example those displayed in Figure 5, remain almost constant when taking much larger $n$ than $10$.

\begin{figure}[H]
\centering
\begin{subfigure}{.48\linewidth}
\begin{tikzpicture}[xscale=0.6,yscale=2.5]
  \def\xmin{0.8}
  \def\xmax{10}
  \def\ymin{0}
  \def\ymax{0.9}
    \draw[->] (\xmin,\ymin) -- (\xmax,\ymin) node[right] {$n$} ;
    \draw[->] (\xmin,\ymin) -- (\xmin,\ymax) node[above] {$\mu_n$} ;
    \foreach \x in {1,2,3,4,5,6,7,8,9,10}
    \node at (\x,\ymin) [below] {\x};
    \foreach \y in {0,0.2,0.4,0.6,0.8}
    \node at (\xmin,\y) [left] {\y};
    
    \draw[smooth,green] (1,0.5157474146)--	(2,0.2497520189)--	(3,0.1209430609)--	(4,0.0585669899)--	(5,0.0283612163)--	(6,0.0137339923)--	(7,0.0066507213)--	(8,0.003220629)--	(9,0.0015595979)--	(10,0.0007552393);

    \foreach \Point in {(1,0.5157474146),	(2,0.2497520189),	(3,0.1209430609),	(4,0.0585669899),	(5,0.0283612163),	(6,0.0137339923),	(7,0.0066507213),	(8,0.003220629),	(9,0.0015595979),	(10,0.0007552393)}	
    {\node[green] at \Point {\scalebox{0.3}{$\triangle$}};}
    
    \draw[dotted,blue] (1,0.6587197696618)--	(2,0.2248080347185)--	(3,0.0767225378706)--	(4,0.0261838853966)--	(5,0.0089360424393)--	(6,0.003049694622)--	(7,0.0010408004831)--	(8,0.0003552046286)--	(9,0.0001212243175)--	(10,0.000041371463);

    \foreach \Point in {(1,0.6587197696618),	(2,0.2248080347185),	(3,0.0767225378706),	(4,0.0261838853966),	(5,0.0089360424393),	(6,0.003049694622),	(7,0.0010408004831),	(8,0.0003552046286),	(9,0.0001212243175),	(10,0.000041371463)}
    {\node[blue] at \Point {\scalebox{0.3}{\textbullet}};}
    
      \draw[densely dotted,red] ( 1 , 0.35825 )-- ( 2 , 0.22991 )-- ( 3 , 0.14754 )-- ( 4 , 0.09468 )-- ( 5 , 0.06076 )-- ( 6 , 0.03899 )-- ( 7 , 0.02502 )-- ( 8 , 0.01606 )-- ( 9 , 0.01031 )-- ( 10 , 0.00661 );

    \foreach \Point in {( 1 , 0.35825 ), ( 2 , 0.22991 ), ( 3 , 0.14754 ), ( 4 , 0.09468 ), ( 5 , 0.06076 ), ( 6 , 0.03899 ), ( 7 , 0.02502 ), ( 8 , 0.01606 ), ( 9 , 0.01031 ), ( 10 , 0.00661 )}	
    {\node[red] at \Point {\scalebox{0.3}{$\star$}};}
    
    \draw[dashed,black](1,0.839907672099729)--	(2,0.134462774447742)--	(3,0.0215264585772682)--	(4,0.00344622086508361)--	(5,0.00055171352074972)--	(6,0.00000883251018708771)--	(7,0.00000141401711705373)--	(8,0.00000022637329195996)--	(9,0.00000003624062728432)--	(10,0.00000000580184638651);

    \foreach \Point in {(1,0.839907672099729),	(2,0.134462774447742),	(3,0.0215264585772682),	(4,0.00344622086508361),	(5,0.00055171352074972),	(6,0.00000883251018708771),	(7,0.00000141401711705373),	(8,0.00000022637329195996),	(9,0.00000003624062728432),	(10,0.00000000580184638651)}	
    {\node[black] at \Point {\scalebox{0.3}{$\square$}};}
\end{tikzpicture}
\caption{$\rho=\lambda=0.2$}\label{fig:opt_a}
\end{subfigure}
\begin{subfigure}{.48\linewidth}
\begin{tikzpicture}[xscale=0.6,yscale=3.75]
  \def\xmin{0.8}
  \def\xmax{10}
  \def\ymin{0}
  \def\ymax{0.61}
    \draw[->] (\xmin,\ymin) -- (\xmax,\ymin) node[right] {$n$} ;
    \draw[->] (\xmin,\ymin) -- (\xmin,\ymax) node[above] {$\mu_n$} ;
    \foreach \x in {1,2,3,4,5,6,7,8,9,10}
    \node at (\x,\ymin) [below] {\x};
    \foreach \y in {0,0.2,0.4,0.6}
    \node at (\xmin,\y) [left] {\y};
    
    \draw[smooth,green] (1,0.3055711)--	(2,0.2121974)--	(3,0.147356)--	(4,0.1023283)--	(5,0.0710597)--	(6,0.0493459)--	(7,0.0342672)--	(8,0.0237961)--	(9,0.0165247)--	(10,0.0114753);

    \foreach \Point in {(1,0.3055711),	(2,0.2121974),	(3,0.147356),	(4,0.1023283),	(5,0.0710597),	(6,0.0493459),	(7,0.0342672),	(8,0.0237961),	(9,0.0165247),	(10,0.0114753)}	
    {\node[green] at \Point {\scalebox{0.3}{$\triangle$}};}
    
    \draw[dotted,blue] (1,0.4023266)--	(2,0.24045991)--	(3,0.14371649)--	(4,0.08589552)--	(5,0.05133747)--	(6,0.03068304)--	(7,0.01833844)--	(8,0.0109604)--	(9,0.00655074)--	(10,0.0039152);

    \foreach \Point in {(1,0.4023266),	(2,0.24045991),	(3,0.14371649),	(4,0.08589552),	(5,0.05133747),	(6,0.03068304),	(7,0.01833844),	(8,0.0109604),	(9,0.00655074),	(10,0.0039152)}
    {\node[blue] at \Point {\scalebox{0.3}{\textbullet}};}
    
      \draw[densely dotted,red] ( 1 , 0.20515 )-- ( 2 , 0.16306 )-- ( 3 , 0.12961 )-- ( 4 , 0.10302 )-- ( 5 , 0.08189 )-- ( 6 , 0.06509 )-- ( 7 , 0.05173 )-- ( 8 , 0.04112 )-- ( 9 , 0.03269 )-- ( 10 , 0.02598 );

    \foreach \Point in {( 1 , 0.20515 ), ( 2 , 0.16306 ), ( 3 , 0.12961 ), ( 4 , 0.10302 ), ( 5 , 0.08189 ), ( 6 , 0.06509 ), ( 7 , 0.05173 ), ( 8 , 0.04112 ), ( 9 , 0.03269 ), ( 10 , 0.02598 )}	
    {\node[red] at \Point {\scalebox{0.3}{$\star$}};}
    
    \draw[dashed,black] (1,0.54123667)--	(2,0.248299537)--	(3,0.1139107225)--	(4,0.0522580624)--	(5,0.0239740827)--	(6,0.01099843)--	(7,0.0050456764)--	(8,0.0023147713)--	(9,0.0010619322)--	(10,0.0004871755);

    \foreach \Point in {(1,0.54123667),	(2,0.248299537),	(3,0.1139107225),	(4,0.0522580624),	(5,0.0239740827),	(6,0.01099843),	(7,0.0050456764),	(8,0.0023147713),	(9,0.0010619322),	(10,0.0004871755)}	
    {\node[black] at \Point {\scalebox{0.3}{$\square$}};}
\end{tikzpicture}
\caption{$\rho=\lambda=0.4$}\label{fig:opt_b}
\end{subfigure}

\begin{subfigure}{.48\linewidth}
\begin{tikzpicture}[xscale=0.6,yscale=5.66]
  \def\xmin{0.8}
  \def\xmax{10}
  \def\ymin{0}
  \def\ymax{0.41}
    \draw[->] (\xmin,\ymin) -- (\xmax,\ymin) node[right] {$n$} ;
    \draw[->] (\xmin,\ymin) -- (\xmin,\ymax) node[above] {$\mu_n$} ;
    \foreach \x in {1,2,3,4,5,6,7,8,9,10}
    \node at (\x,\ymin) [below] {\x};
    \foreach \y in {0,0.1,0.2,0.3,0.4}
    \node at (\xmin,\y) [left] {\y};
    
    \draw[smooth,green] (1,0.159331)--	(2,0.133944)--	(3,0.112603)--	(4,0.094662)--	(5,0.079579)--	(6,0.0669)--	(7,0.056241)--	(8,0.04728)--	(9,0.039747)--	(10,0.033414);

    \foreach \Point in {(1,0.159331),	(2,0.133944),	(3,0.112603),	(4,0.094662),	(5,0.079579),	(6,0.0669),	(7,0.056241),	(8,0.04728),	(9,0.039747),	(10,0.033414)}	
    {\node[green] at \Point {\scalebox{0.3}{$\triangle$}};}
    
    \draw[dotted,blue] (1,0.212008)--	(2,0.167061)--	(3,0.131642)--	(4,0.103733)--	(5,0.081741)--	(6,0.064411)--	(7,0.050755)--	(8,0.039995)--	(9,0.031516)--	(10,0.024834);

    \foreach \Point in {(1,0.212008),	(2,0.167061),	(3,0.131642),	(4,0.103733),	(5,0.081741),	(6,0.064411),	(7,0.050755),	(8,0.039995),	(9,0.031516),	(10,0.024834)}
    {\node[blue] at \Point {\scalebox{0.3}{\textbullet}};}
    
      \draw[densely dotted,red] ( 1 , 0.10811 )-- ( 2 , 0.09642 )-- ( 3 , 0.086 )-- ( 4 , 0.0767 )-- ( 5 , 0.06841 )-- ( 6 , 0.06101 )-- ( 7 , 0.05442 )-- ( 8 , 0.04853 )-- ( 9 , 0.04329 )-- ( 10 , 0.03861 );

    \foreach \Point in {( 1 , 0.10811 ), ( 2 , 0.09642 ), ( 3 , 0.086 ), ( 4 , 0.0767 ), ( 5 , 0.06841 ), ( 6 , 0.06101 ), ( 7 , 0.05442 ), ( 8 , 0.04853 ), ( 9 , 0.04329 ), ( 10 , 0.03861 )}	
    {\node[red] at \Point {\scalebox{0.3}{$\star$}};}
    
    \draw[dashed,black] (1,0.289094)--	(2,0.2055187)--	(3,0.1461044)--	(4,0.1038665)--	(5,0.0738393)--	(6,0.0524928)--	(7,0.0373175)--	(8,0.0265292)--	(9,0.0188598)--	(10,0.0134075);

    \foreach \Point in {(1,0.289094),	(2,0.2055187),	(3,0.1461044),	(4,0.1038665),	(5,0.0738393),	(6,0.0524928),	(7,0.0373175),	(8,0.0265292),	(9,0.0188598),	(10,0.0134075)}	
    {\node[black] at \Point {\scalebox{0.3}{$\square$}};}
\end{tikzpicture}
\caption{$\rho=\lambda=0.6$}\label{fig:opt_c}
\end{subfigure}
\begin{subfigure}{.48\linewidth}
\begin{tikzpicture}[xscale=0.6,yscale=15]
  \def\xmin{0.8}
  \def\xmax{10}
  \def\ymin{0}
  \def\ymax{0.16}
    \draw[->] (\xmin,\ymin) -- (\xmax,\ymin) node[right] {$n$} ;
    \draw[->] (\xmin,\ymin) -- (\xmin,\ymax) node[above] {$\mu_n$} ;
    \foreach \x in {1,2,3,4,5,6,7,8,9,10}
    \node at (\x,\ymin) [below] {\x};
    \foreach \y in {0,0.05,0.1,0.15}
    \node at (\xmin,\y) [left] {\y};
    
    \draw[smooth,green] (1,0.06693)--	(2,0.06245)--	(3,0.05827)--	(4,0.05437)--	(5,0.05073)--	(6,0.04734)--	(7,0.04417)--	(8,0.04121)--	(9,0.03845)--	(10,0.03588);

    \foreach \Point in {(1,0.06693),	(2,0.06245),	(3,0.05827),	(4,0.05437),	(5,0.05073),	(6,0.04734),	(7,0.04417),	(8,0.04121),	(9,0.03845),	(10,0.03588)}	
    {\node[green] at \Point {\scalebox{0.3}{$\triangle$}};}
    
    \draw[dotted,blue] (1,0.08488)--	(2,0.07768)--	(3,0.07108)--	(4,0.06505)--	(5,0.05953)--	(6,0.05448)--	(7,0.04985)--	(8,0.04562)--	(9,0.04175)--	(10,0.0382);

    \foreach \Point in {(1,0.08488),	(2,0.07768),	(3,0.07108),	(4,0.06505),	(5,0.05953),	(6,0.05448),	(7,0.04985),	(8,0.04562),	(9,0.04175),	(10,0.0382)}
    {\node[blue] at \Point {\scalebox{0.3}{\textbullet}};}
    
      \draw[densely dotted,red] ( 1 , 0.04908 )-- ( 2 , 0.04667 )-- ( 3 , 0.04438 )-- ( 4 , 0.0422 )-- ( 5 , 0.04013 )-- ( 6 , 0.03816 )-- ( 7 , 0.03629 )-- ( 8 , 0.03451 )-- ( 9 , 0.03281 )-- ( 10 , 0.0312 );

    \foreach \Point in {( 1 , 0.04908 ), ( 2 , 0.04667 ), ( 3 , 0.04438 ), ( 4 , 0.0422 ), ( 5 , 0.04013 ), ( 6 , 0.03816 ), ( 7 , 0.03629 ), ( 8 , 0.03451 ), ( 9 , 0.03281 ), ( 10 , 0.0312 )}	
    {\node[red] at \Point {\scalebox{0.3}{$\star$}};}
    
    \draw[dashed,black] (1,0.11064)--	(2,0.0984)--	(3,0.08751)--	(4,0.07783)--	(5,0.06922)--	(6,0.06156)--	(7,0.05475)--	(8,0.04869)--	(9,0.0433)--	(10,0.03851);

    \foreach \Point in {(1,0.11064),	(2,0.0984),	(3,0.08751),	(4,0.07783),	(5,0.06922),	(6,0.06156),	(7,0.05475),	(8,0.04869),	(9,0.0433),	(10,0.03851)}	
    {\node[black] at \Point {\scalebox{0.3}{$\square$}};}
    
\end{tikzpicture}
\caption{$\rho=\lambda=0.8$}\label{fig:opt_d}
\end{subfigure}
\vspace{0.5cm}

\begin{tikzpicture}
    \begin{customlegend}
    [legend entries={ $k=0.5$,$k=1$,$k=2$,$k=10$},legend columns=-1,legend style={/tikz/every even column/.append style={column sep=0.8cm}}]   
    \addlegendimage{red,dashdotted,mark=star}
    \addlegendimage{green,smooth,mark=triangle}     
    \addlegendimage{blue,densely dotted,mark=*}    
    \addlegendimage{black,mark=square} 
    \end{customlegend}
\end{tikzpicture}
\caption{Approximate optimal service-rate sequence for varying values of $\rho$ and $k$.}
\label{fig:opt}
\end{figure}
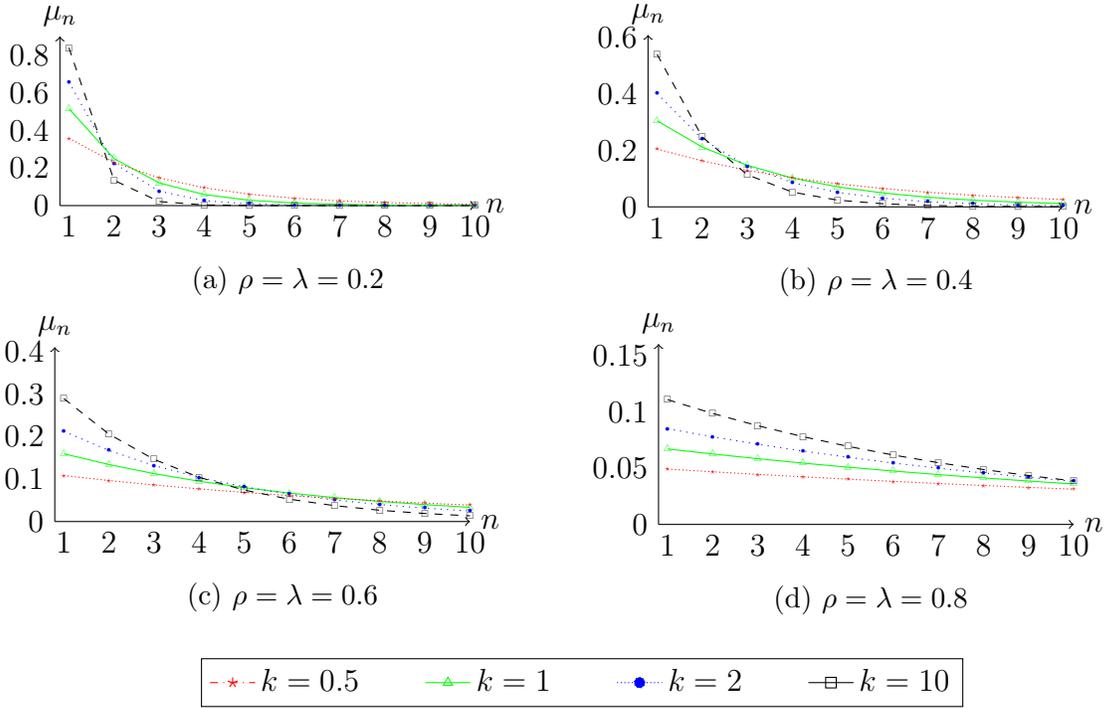

\begin{table}[H]
\centering
\footnotesize{
\begin{tabular}{|c|c|c|c|c|} \hline 
  & $\rho=0.2$  & $\rho=0.4$  & $\rho=0.6$ & $\rho=0.8$  \\ \hline 
 $k=0.5$ & $(5.18,0.04)$  & $(10.81,0.387)$ & $(25.72,8.23)$ & $(118.1,98.38)$  \\ \hline  
 $k=1$ & $(3.22,4.7^{-5})$  & $(6.86,0.014)$ & $(16.23,1.57)$ & $(69.78,48.48)$  \\ \hline 
  $k=2$ & $(2.23,1.6^{-7})$  & $(4.87,0.001)$ & $(11.78,0.36)$ & $(48.21,27.54)$  \\ \hline 
   $k=5$ & $(1.63,6.9^{-10})$  & $(3.66,3.3^{-4})$ & $(9.14,0.08)$ & $(36.19,16.29)$  \\ \hline 
    $k=10$ & $(1.44,8.2^{-12})$  & $(3.25,8.2^{-5})$ & $(8.25,0.04)$ & $(32.4,12.9)$  \\ \hline 
\end{tabular}
\caption{Approximate optimal expected delay and the optimal tail approximation: ($\E S$,$r_M$).}
\label{tbl:opt}}
\end{table}

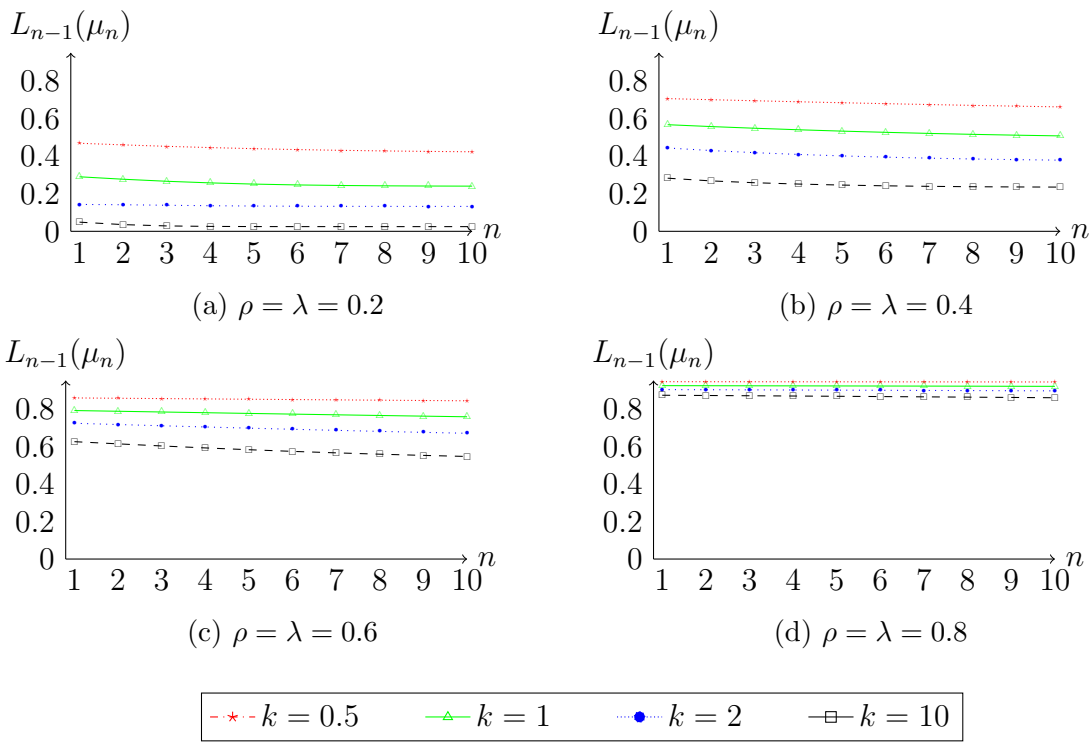
\begin{figure}[H]
\centering
\begin{subfigure}{.48\linewidth}
\begin{tikzpicture}[xscale=0.58,yscale=2.5]
  \def\xmin{0.8}
  \def\xmax{10}
  \def\ymin{0}
  \def\ymax{0.95}
    \draw[->] (\xmin,\ymin) -- (\xmax,\ymin) node[right] {$n$} ;
    \draw[->] (\xmin,\ymin) -- (\xmin,\ymax) node[above] {$L_{n-1}(\mu_n)$} ;
    \foreach \x in {1,2,3,4,5,6,7,8,9,10}
    \node at (\x,\ymin) [below] {\x};
    \foreach \y in {0,0.2,0.4,0.6,0.8}
    \node at (\xmin,\y) [left] {\y};
    
    \draw[smooth,green] (1,0.2902)--	(2,0.2772)--	(3,0.2663)--	(4,0.2577)--	(5,0.2513)--	(6,0.2468)--	(7,0.2439)--	(8,0.2421)--	(9,0.241)--	(10,0.2404);

    \foreach \Point in {(1,0.2902),	(2,0.2772),	(3,0.2663),	(4,0.2577),	(5,0.2513),	(6,0.2468),	(7,0.2439),	(8,0.2421),	(9,0.241),	(10,0.2404)}	
    {\node[green] at \Point {\scalebox{0.3}{$\triangle$}};}
    
    \draw[dotted,blue] (1,0.1431)--	(2,0.1414)--	(3,0.1387)--	(4,0.1362)--	(5,0.1345)--	(6,0.1335)--	(7,0.133)--	(8,0.1327)--	(9,0.1326)--	(10,0.1326);

    \foreach \Point in {(1,0.1431),	(2,0.1414),	(3,0.1387),	(4,0.1362),	(5,0.1345),	(6,0.1335),	(7,0.133),	(8,0.1327),	(9,0.1326),	(10,0.1326)}
    {\node[blue] at \Point {\scalebox{0.3}{\textbullet}};}
    
      \draw[densely dotted,red] ( 1 , 0.46863 )-- ( 2 , 0.45999 )-- ( 3 , 0.4523 )-- ( 4 , 0.44557 )-- ( 5 , 0.4398 )-- ( 6 , 0.43494 )-- ( 7 , 0.43095 )-- ( 8 , 0.42773 )-- ( 9 , 0.42521 )-- ( 10 , 0.42326 );

    \foreach \Point in {( 1 , 0.46863 ), ( 2 , 0.45999 ), ( 3 , 0.4523 ), ( 4 , 0.44557 ), ( 5 , 0.4398 ), ( 6 , 0.43494 ), ( 7 , 0.43095 ), ( 8 , 0.42773 ), ( 9 , 0.42521 ), ( 10 , 0.42326 )}	
    {\node[red] at \Point {\scalebox{0.3}{$\star$}};}
    
    \draw[dashed,black] (1,0.04869)--	(2,0.03571)--	(3,0.02868)--	(4,0.02609)--	(5,0.02529)--	(6,0.02505)--	(7,0.02498)--	(8,0.02496)--	(9,0.02496)--	(10,0.02495);

    \foreach \Point in {(1,0.04869),	(2,0.03571),	(3,0.02868),	(4,0.02609),	(5,0.02529),	(6,0.02505),	(7,0.02498),	(8,0.02496),	(9,0.02496),	(10,0.02495)}	
    {\node[black] at \Point {\scalebox{0.3}{$\square$}};}
\end{tikzpicture}
\caption{$\rho=\lambda=0.2$}\label{fig:lst_a}
\end{subfigure}
\begin{subfigure}{.48\linewidth}
\begin{tikzpicture}[xscale=0.58,yscale=2.5]
  \def\xmin{0.8}
  \def\xmax{10}
  \def\ymin{0}
  \def\ymax{0.95}
    \draw[->] (\xmin,\ymin) -- (\xmax,\ymin) node[right] {$n$} ;
    \draw[->] (\xmin,\ymin) -- (\xmin,\ymax) node[above] {$L_{n-1}(\mu_n)$} ;
    \foreach \x in {1,2,3,4,5,6,7,8,9,10}
    \node at (\x,\ymin) [below] {\x};
    \foreach \y in {0,0.2,0.4,0.6,0.8}
    \node at (\xmin,\y) [left] {\y};
    
    \draw[smooth,green] (1,0.5674)--	(2,0.5574)--	(3,0.5485)--	(4,0.5404)--	(5,0.5332)--	(6,0.5268)--	(7,0.521)--	(8,0.516)--	(9,0.5116)--	(10,0.5078);

    \foreach \Point in {(1,0.5674),	(2,0.5574),	(3,0.5485),	(4,0.5404),	(5,0.5332),	(6,0.5268),	(7,0.521),	(8,0.516),	(9,0.5116),	(10,0.5078)}	
    {\node[green] at \Point {\scalebox{0.3}{$\triangle$}};}
    
    \draw[dotted,blue] (1,0.4435)--	(2,0.4298)--	(3,0.4185)--	(4,0.4091)--	(5,0.4013)--	(6,0.3947)--	(7,0.3893)--	(8,0.3849)--	(9,0.3814)--	(10,0.3787);

    \foreach \Point in {(1,0.4435),	(2,0.4298),	(3,0.4185),	(4,0.4091),	(5,0.4013),	(6,0.3947),	(7,0.3893),	(8,0.3849),	(9,0.3814),	(10,0.3787)}
    {\node[blue] at \Point {\scalebox{0.3}{\textbullet}};}
    
      \draw[densely dotted,red] ( 1 , 0.70621 )-- ( 2 , 0.70039 )-- ( 3 , 0.69478 )-- ( 4 , 0.68939 )-- ( 5 , 0.68423 )-- ( 6 , 0.6793 )-- ( 7 , 0.67463 )-- ( 8 , 0.6702 )-- ( 9 , 0.66604 )-- ( 10 , 0.66213 );

    \foreach \Point in {( 1 , 0.70621 ), ( 2 , 0.70039 ), ( 3 , 0.69478 ), ( 4 , 0.68939 ), ( 5 , 0.68423 ), ( 6 , 0.6793 ), ( 7 , 0.67463 ), ( 8 , 0.6702 ), ( 9 , 0.66604 ), ( 10 , 0.66213 )}	
    {\node[red] at \Point {\scalebox{0.3}{$\star$}};}
    
    \draw[dashed,black](1,0.2827)--	(2,0.2682)--	(3,0.2585)--	(4,0.2515)--	(5,0.2461)--	(6,0.2421)--	(7,0.2393)--	(8,0.2373)--	(9,0.2361)--	(10,0.2353);

    \foreach \Point in {(1,0.2827),	(2,0.2682),	(3,0.2585),	(4,0.2515),	(5,0.2461),	(6,0.2421),	(7,0.2393),	(8,0.2373),	(9,0.2361),	(10,0.2353)}	
    {\node[black] at \Point {\scalebox{0.3}{$\square$}};}
\end{tikzpicture}
\caption{$\rho=\lambda=0.4$}\label{fig:lst_b}
\end{subfigure}

\begin{subfigure}{.48\linewidth}
\begin{tikzpicture}[xscale=0.58,yscale=2.5]
  \def\xmin{0.8}
  \def\xmax{10}
  \def\ymin{0}
  \def\ymax{0.95}
    \draw[->] (\xmin,\ymin) -- (\xmax,\ymin) node[right] {$n$} ;
    \draw[->] (\xmin,\ymin) -- (\xmin,\ymax) node[above] {$L_{n-1}(\mu_n)$} ;
    \foreach \x in {1,2,3,4,5,6,7,8,9,10}
    \node at (\x,\ymin) [below] {\x};
    \foreach \y in {0,0.2,0.4,0.6,0.8}
    \node at (\xmin,\y) [left] {\y};
    
   \draw[smooth,green] (1,0.7903)--	(2,0.7865)--	(3,0.7826)--	(4,0.7788)--	(5,0.775)--	(6,0.7713)--	(7,0.7676)--	(8,0.764)--	(9,0.7605)--	(10,0.7571);

    \foreach \Point in {(1,0.7903),	(2,0.7865),	(3,0.7826),	(4,0.7788),	(5,0.775),	(6,0.7713),	(7,0.7676),	(8,0.764),	(9,0.7605),	(10,0.7571)}	
    {\node[green] at \Point {\scalebox{0.3}{$\triangle$}};}
    
    \draw[dotted,blue] (1,0.7225)--	(2,0.7159)--	(3,0.7093)--	(4,0.7029)--	(5,0.6966)--	(6,0.6906)--	(7,0.6849)--	(8,0.6794)--	(9,0.6742)--	(10,0.6693);

    \foreach \Point in {(1,0.7225),	(2,0.7159),	(3,0.7093),	(4,0.7029),	(5,0.6966),	(6,0.6906),	(7,0.6849),	(8,0.6794),	(9,0.6742),	(10,0.6693)}
    {\node[blue] at \Point {\scalebox{0.3}{\textbullet}};}
    
      \draw[densely dotted,red] ( 1 , 0.85737 )-- ( 2 , 0.85566 )-- ( 3 , 0.85394 )-- ( 4 , 0.85221 )-- ( 5 , 0.85049 )-- ( 6 , 0.84878 )-- ( 7 , 0.84708 )-- ( 8 , 0.84539 )-- ( 9 , 0.84372 )-- ( 10 , 0.84207 );

    \foreach \Point in {( 1 , 0.85737 ), ( 2 , 0.85566 ), ( 3 , 0.85394 ), ( 4 , 0.85221 ), ( 5 , 0.85049 ), ( 6 , 0.84878 ), ( 7 , 0.84708 ), ( 8 , 0.84539 ), ( 9 , 0.84372 ), ( 10 , 0.84207 )}	
    {\node[red] at \Point {\scalebox{0.3}{$\star$}};}
    
    \draw[dashed,black] (1,0.6246)--	(2,0.613)--	(3,0.6018)--	(4,0.5914)--	(5,0.5818)--	(6,0.573)--	(7,0.5651)--	(8,0.558)--	(9,0.5516)--	(10,0.5459);

    \foreach \Point in {(1,0.6246),	(2,0.613),	(3,0.6018),	(4,0.5914),	(5,0.5818),	(6,0.573),	(7,0.5651),	(8,0.558),	(9,0.5516),	(10,0.5459)}	
    {\node[black] at \Point {\scalebox{0.3}{$\square$}};}
\end{tikzpicture}
\caption{$\rho=\lambda=0.6$}\label{fig:lst_c}
\end{subfigure}
\begin{subfigure}{.48\linewidth}
\begin{tikzpicture}[xscale=0.58,yscale=2.5]
  \def\xmin{0.8}
  \def\xmax{10}
  \def\ymin{0}
  \def\ymax{0.95}
    \draw[->] (\xmin,\ymin) -- (\xmax,\ymin) node[right] {$n$} ;
    \draw[->] (\xmin,\ymin) -- (\xmin,\ymax) node[above] {$L_{n-1}(\mu_n)$} ;
    \foreach \x in {1,2,3,4,5,6,7,8,9,10}
    \node at (\x,\ymin) [below] {\x};
    \foreach \y in {0,0.2,0.4,0.6,0.8}
    \node at (\xmin,\y) [left] {\y};
    
    \draw[smooth,green] (1,0.9228)--	(2,0.9224)--	(3,0.922)--	(4,0.9216)--	(5,0.9212)--	(6,0.9208)--	(7,0.9204)--	(8,0.92)--	(9,0.9195)--	(10,0.9191);

    \foreach \Point in {(1,0.9228),	(2,0.9224),	(3,0.922),	(4,0.9216),	(5,0.9212),	(6,0.9208),	(7,0.9204),	(8,0.92),	(9,0.9195),	(10,0.9191)}	
    {\node[green] at \Point {\scalebox{0.3}{$\triangle$}};}
    
    \draw[dotted,blue] (1,0.9018)--	(2,0.9011)--	(3,0.9004)--	(4,0.8996)--	(5,0.8989)--	(6,0.8981)--	(7,0.8974)--	(8,0.8966)--	(9,0.8958)--	(10,0.895);

    \foreach \Point in {(1,0.9018),	(2,0.9011),	(3,0.9004),	(4,0.8996),	(5,0.8989),	(6,0.8981),	(7,0.8974),	(8,0.8966),	(9,0.8958),	(10,0.895)}
    {\node[blue] at \Point {\scalebox{0.3}{\textbullet}};}
    
      \draw[densely dotted,red] ( 1 , 0.94383 )-- ( 2 , 0.94366 )-- ( 3 , 0.94349 )-- ( 4 , 0.94332 )-- ( 5 , 0.94315 )-- ( 6 , 0.94298 )-- ( 7 , 0.94282 )-- ( 8 , 0.94265 )-- ( 9 , 0.94249 )-- ( 10 , 0.94232 );

    \foreach \Point in {( 1 , 0.94383 ), ( 2 , 0.94366 ), ( 3 , 0.94349 ), ( 4 , 0.94332 ), ( 5 , 0.94315 ), ( 6 , 0.94298 ), ( 7 , 0.94282 ), ( 8 , 0.94265 ), ( 9 , 0.94249 ), ( 10 , 0.94232 )}	
    {\node[red] at \Point {\scalebox{0.3}{$\star$}};}
    
    \draw[dashed,black] (1,0.8717)--	(2,0.8704)--	(3,0.869)--	(4,0.8676)--	(5,0.8662)--	(6,0.8647)--	(7,0.8632)--	(8,0.8616)--	(9,0.86)--	(10,0.8584);

    \foreach \Point in {(1,0.8717),	(2,0.8704),	(3,0.869),	(4,0.8676),	(5,0.8662),	(6,0.8647),	(7,0.8632),	(8,0.8616),	(9,0.86),	(10,0.8584)}	
    {\node[black] at \Point {\scalebox{0.3}{$\square$}};}
\end{tikzpicture}
\caption{$\rho=\lambda=0.8$}\label{fig:lst_d}
\end{subfigure}
\vspace{0.5cm}

\begin{tikzpicture}
    \begin{customlegend}
    [legend entries={$k=0.5$, $k=1$,$k=2$,$k=10$},legend columns=-1,legend style={/tikz/every even column/.append style={column sep=0.8cm}}]   
	\addlegendimage{red,dashdotted,mark=star}     
    \addlegendimage{green,smooth,mark=triangle}     
    \addlegendimage{blue,densely dotted,mark=*}   
    \addlegendimage{black,mark=square} 
    \end{customlegend}
\end{tikzpicture}
\caption{Laplace transform sequence ($L_{n-1}(\mu_n)$) corresponding to the approximate optimal service-rate sequence for varying values of $\rho$ and $k$.}
\label{fig:lst}
\end{figure}

In the special case of Poisson arrivals ($k=1$) it is interesting to observe that $\mu_1\eqsim 1-\sqrt{\rho}$, as seen in Figure \ref{fig:mu1_rho}. Thus, a reasonable rough approximation for the optimal service-rate sequence given by
\[
\mu_n=\left\lbrace\begin{array}{cc}
1-\sqrt{\rho}, & n=1 \\
\sqrt{\rho}\mu_{n-1}, & n\geq 2.
\end{array}\right.
\]
The value $\sqrt{\rho}$ appeared in two places before: (1) The solution to the equation $L_0(\alpha)=L_1(\alpha(1-\alpha))$ is $\alpha=1-\sqrt{\rho}$, as was elaborated in Observation 2 of Section \ref{sec:geometric} (see also Figure \ref{fig:l_alpha}). This seems to be a critical point for the limit function $\ell(\alpha)$ for geometric service-rate sequence (with rate $\alpha$). (2) The optimal tail decay rate given in Proposition \ref{prop:TAP} is $\sqrt{\ell}$, where in the Poisson case we observe that $\ell\eqsim C\rho$ where $C$ is a constant that was in the range of $(1,1.15)$ in all examples computed.

\begin{figure}[H]
\centering
\begin{tikzpicture}[xscale=7,yscale=3.5]
  \def\xmin{0}
  \def\xmax{1}
  \def\ymin{0}
  \def\ymax{1.05}
    \draw[->] (\xmin,\ymin) -- (\xmax,\ymin) node[right] {$\rho$} ;
    \draw[->] (\xmin,\ymin) -- (\xmin,\ymax); 
    \foreach \x in {0.2,0.4,0.6,0.8,1}
    \node at (\x,\ymin) [below] {\x};
    \foreach \y in {0,0.2,0.4,0.6,0.8,1}
    \node at (\xmin,\y) [left] {\y};
	
    \draw[red,densely dashed,domain=0:1] plot (\x, {1-sqrt(\x)}) ;
    
    \draw[blue] (0.001,0.9) -- (0.05,0.76211)--	(0.1,0.65953)--	(0.15,0.57994)--	(0.2,0.50975)--	(0.25,0.45226)--	(0.3,0.39805)--	(0.35,0.3446)--	(0.4,0.30389)--	(0.45,0.26407)--	(0.5,0.22542)--	(0.55,0.19067)--	(0.6,0.15887)--	(0.65,0.13086)--	(0.7,0.1069)--	(0.75,0.08578)--	(0.8,0.06824)--	(0.85,0.05196)--	(0.9,0.03623)--	(0.95,0.01991);

    \node[draw=none,color=red] at (0.8,0.23) {$1-\sqrt{\rho}$};
    \node[draw=none,color=blue] at (0.6,0.1) {$\mu_1$};
    
\end{tikzpicture}
 \caption{Approximation of optimal $\mu_1$ as a function of $\rho$ for the Poisson arrival case ($k=1$).}\label{fig:mu1_rho}
\end{figure}
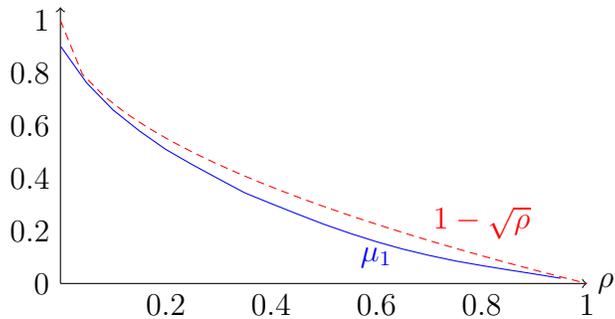

Let $\rho_0=\rho=\frac{\lambda}{\mu}$ and let $\rho_n:=\frac{\lambda p_{n}}{\mu-\sum_{i=1}^{n}\mu_i}$ denote the effective utilization of the sub-system excluding the first $n$ servers for $n\geq 1$. In Figure \ref{fig:rho} we see that the effective utilization sequence, given the approximate optimal service-rate sequence, is decreasing with $n$ for all parameter values. An interesting numerical result is that in all examples the tail of the utilization level sequence decays geometrically. The rate of decay is slightly higher than $1-\mu_1$, that is to say the effective utilization decreases at a slower rate than the service-rate sequence, as expected.

\begin{figure}[H]
\centering
\begin{subfigure}{.48\linewidth}
\begin{tikzpicture}[xscale=0.6,yscale=2.5]
  \def\xmin{0.8}
  \def\xmax{10}
  \def\ymin{0}
  \def\ymax{0.9}
    \draw[->] (\xmin,\ymin) -- (\xmax,\ymin) node[right] {$n$} ;
    \draw[->] (\xmin,\ymin) -- (\xmin,\ymax) node[above] {$\rho_n$} ;
    \foreach \x in {1,2,3,4,5,6,7,8,9,10}
    \node at (\x,\ymin) [below] {\x};
    \foreach \y in {0,0.2,0.4,0.6,0.8}
    \node at (\xmin,\y) [left] {\y};
    
    \draw[smooth,green] (1,0.2)--	(2,0.1183860622)--	(3,0.066929141)--	(4,0.0363566839)--	(5,0.0191111039)--	(6,0.0097963413)--	(7,0.0049325924)--	(8,0.0024542598)--	(9,0.0012120319)--	(10,0.000595864);

    \foreach \Point in {(1,0.2),	(2,0.1183860622),	(3,0.066929141),	(4,0.0363566839),	(5,0.0191111039),	(6,0.0097963413),	(7,0.0049325924),	(8,0.0024542598),	(9,0.0012120319),	(10,0.000595864)}	
    {\node[green] at \Point {\scalebox{0.3}{$\triangle$}};}
    
    \draw[dotted,blue] (1,0.2)--	(2,0.083551161774)--	(3,0.034490599154)--	(4,0.01396840367)--	(5,0.005555948128)--	(6,0.002182029091)--	(7,0.00085062238)--	(8,0.000330321406)--	(9,0.000128035905)--	(10,0.000049586148);

    \foreach \Point in {(1,0.2),	(2,0.083551161774),	(3,0.034490599154),	(4,0.01396840367),	(5,0.005555948128),	(6,0.002182029091),	(7,0.00085062238),	(8,0.000330321406),	(9,0.000128035905),	(10,0.000049586148)}
    {\node[blue] at \Point {\scalebox{0.3}{\textbullet}};}
    
      \draw[densely dotted,red] ( 1 , 0.2 )-- ( 2 , 0.14605 )-- ( 3 , 0.10468 )-- ( 4 , 0.07378 )-- ( 5 , 0.05123 )-- ( 6 , 0.03511 )-- ( 7 , 0.02379 )-- ( 8 , 0.01598 )-- ( 9 , 0.01065 )-- ( 10 , 0.00706 );

    \foreach \Point in {( 1 , 0.2 ), ( 2 , 0.14605 ), ( 3 , 0.10468 ), ( 4 , 0.07378 ), ( 5 , 0.05123 ), ( 6 , 0.03511 ), ( 7 , 0.02379 ), ( 8 , 0.01598 ), ( 9 , 0.01065 ), ( 10 , 0.00706 )}	
    {\node[red] at \Point {\scalebox{0.3}{$\star$}};}
    
    \draw[dashed,black] (1,0.2)--	(2,0.060113988117692)--	(3,0.0132504723393969)--	(4,0.00234555499039934)--	(5,0.000377668558325819)--	(6,0.00000589468822303439)--	(7,0.00000091149741375569)--	(8,0.00000014055816846642)--	(9,0.00000002165737996784)--	(10,0.000000033362039986);

    \foreach \Point in {(1,0.2),	(2,0.060113988117692),	(3,0.0132504723393969),	(4,0.00234555499039934),	(5,0.000377668558325819),	(6,0.00000589468822303439),	(7,0.00000091149741375569),	(8,0.00000014055816846642),	(9,0.00000002165737996784),	(10,0.000000033362039986)}	
    {\node[black] at \Point {\scalebox{0.3}{$\square$}};}
\end{tikzpicture}
\caption{$\rho=\lambda=0.2$}\label{fig:rho_a}
\end{subfigure}
\begin{subfigure}{.48\linewidth}
\begin{tikzpicture}[xscale=0.6,yscale=2.5]
  \def\xmin{0.8}
  \def\xmax{10}
  \def\ymin{0}
  \def\ymax{0.9}
    \draw[->] (\xmin,\ymin) -- (\xmax,\ymin) node[right] {$n$} ;
    \draw[->] (\xmin,\ymin) -- (\xmin,\ymax) node[above] {$\rho_n$} ;
    \foreach \x in {1,2,3,4,5,6,7,8,9,10}
    \node at (\x,\ymin) [below] {\x};
    \foreach \y in {0,0.2,0.4,0.6,0.8}
    \node at (\xmin,\y) [left] {\y};
    
     \draw[smooth,green](1,0.4)--	(2,0.326547)--	(3,0.26189)--	(4,0.206649)--	(5,0.160665)--	(6,0.123246)--	(7,0.093401)--	(8,0.070016)--	(9,0.051978)--	(10,0.038258);

    \foreach \Point in {(1,0.4),	(2,0.326547),	(3,0.26189),	(4,0.206649),	(5,0.160665),	(6,0.123246),	(7,0.093401),	(8,0.070016),	(9,0.051978),	(10,0.038258)}	
    {\node[green] at \Point {\scalebox{0.3}{$\triangle$}};}
    
    \draw[dotted,blue] (1,0.4)--	(2,0.296834)--	(3,0.2134753)--	(4,0.1494922)--	(5,0.1023365)--	(6,0.0687102)--	(7,0.0453788)--	(8,0.0295585)--	(9,0.0190363)--	(10,0.0121489);

    \foreach \Point in {(1,0.4),	(2,0.296834),	(3,0.2134753),	(4,0.1494922),	(5,0.1023365),	(6,0.0687102),	(7,0.0453788),	(8,0.0295585),	(9,0.0190363),	(10,0.0121489)}
    {\node[blue] at \Point {\scalebox{0.3}{\textbullet}};}
    
      \draw[densely dotted,red] ( 1 , 0.4 )-- ( 2 , 0.35539 )-- ( 3 , 0.31316 )-- ( 4 , 0.27373 )-- ( 5 , 0.23741 )-- ( 6 , 0.20437 )-- ( 7 , 0.17466 )-- ( 8 , 0.14824 )-- ( 9 , 0.125 )-- ( 10 , 0.10474 );

    \foreach \Point in {( 1 , 0.4 ), ( 2 , 0.35539 ), ( 3 , 0.31316 ), ( 4 , 0.27373 ), ( 5 , 0.23741 ), ( 6 , 0.20437 ), ( 7 , 0.17466 ), ( 8 , 0.14824 ), ( 9 , 0.125 ), ( 10 , 0.10474 )}	
    {\node[red] at \Point {\scalebox{0.3}{$\star$}};}
    
    \draw[dashed,black] (1,0.4)--	(2,0.246085861)--	(3,0.143626829)--	(4,0.080812862)--	(5,0.044233484)--	(6,0.023696777)--	(7,0.012487959)--	(8,0.006502927)--	(9,0.003358763)--	(10,0.001725664);

    \foreach \Point in {(1,0.4),	(2,0.246085861),	(3,0.143626829),	(4,0.080812862),	(5,0.044233484),	(6,0.023696777),	(7,0.012487959),	(8,0.006502927),	(9,0.003358763),	(10,0.001725664)}	
    {\node[black] at \Point {\scalebox{0.3}{$\square$}};}
\end{tikzpicture}
\caption{$\rho=\lambda=0.4$}\label{fig:rho_b}
\end{subfigure}

\begin{subfigure}{.48\linewidth}
\begin{tikzpicture}[xscale=0.6,yscale=2.5]
  \def\xmin{0.8}
  \def\xmax{10}
  \def\ymin{0}
  \def\ymax{0.9}
    \draw[->] (\xmin,\ymin) -- (\xmax,\ymin) node[right] {$n$} ;
    \draw[->] (\xmin,\ymin) -- (\xmin,\ymax) node[above] {$\rho_n$} ;
    \foreach \x in {1,2,3,4,5,6,7,8,9,10}
    \node at (\x,\ymin) [below] {\x};
    \foreach \y in {0,0.2,0.4,0.6,0.8}
    \node at (\xmin,\y) [left] {\y};
    
    \draw[smooth,green] (1,0.6)--	(2,0.56397)--	(3,0.52753)--	(4,0.49104)--	(5,0.45485)--	(6,0.41927)--	(7,0.38461)--	(8,0.35113)--	(9,0.31907)--	(10,0.28861);

    \foreach \Point in {(1,0.6),	(2,0.56397),	(3,0.52753),	(4,0.49104),	(5,0.45485),	(6,0.41927),	(7,0.38461),	(8,0.35113),	(9,0.31907),	(10,0.28861)}	
    {\node[green] at \Point {\scalebox{0.3}{$\triangle$}};}
    
    \draw[dotted,blue] (1,0.6)--	(2,0.54996)--	(3,0.49948)--	(4,0.44947)--	(5,0.4008)--	(6,0.35423)--	(7,0.31037)--	(8,0.26967)--	(9,0.23244)--	(10,0.19881);

    \foreach \Point in {(1,0.6),	(2,0.54996),	(3,0.49948),	(4,0.44947),	(5,0.4008),	(6,0.35423),	(7,0.31037),	(8,0.26967),	(9,0.23244),	(10,0.19881)}
    {\node[blue] at \Point {\scalebox{0.3}{\textbullet}};}
    
      \draw[densely dotted,red] ( 1 , 0.6 )-- ( 2 , 0.57678 )-- ( 3 , 0.55335 )-- ( 4 , 0.52981 )-- ( 5 , 0.50624 )-- ( 6 , 0.48275 )-- ( 7 , 0.45941 )-- ( 8 , 0.43633 )-- ( 9 , 0.41359 )-- ( 10 , 0.39125 );

    \foreach \Point in {( 1 , 0.6 ), ( 2 , 0.57678 ), ( 3 , 0.55335 ), ( 4 , 0.52981 ), ( 5 , 0.50624 ), ( 6 , 0.48275 ), ( 7 , 0.45941 ), ( 8 , 0.43633 ), ( 9 , 0.41359 ), ( 10 , 0.39125 )}	
    {\node[red] at \Point {\scalebox{0.3}{$\star$}};}
    
    \draw[dashed,black] (1,0.6)--	(2,0.527194)--	(3,0.454568)--	(4,0.384824)--	(5,0.320138)--	(6,0.261998)--	(7,0.211188)--	(8,0.167877)--	(9,0.131766)--	(10,0.102239);

    \foreach \Point in {(1,0.6),	(2,0.527194),	(3,0.454568),	(4,0.384824),	(5,0.320138),	(6,0.261998),	(7,0.211188),	(8,0.167877),	(9,0.131766),	(10,0.102239)}	
    {\node[black] at \Point {\scalebox{0.3}{$\square$}};}
\end{tikzpicture}
\caption{$\rho=\lambda=0.6$}\label{fig:rho_c}
\end{subfigure}
\begin{subfigure}{.48\linewidth}
\begin{tikzpicture}[xscale=0.6,yscale=2.5]
  \def\xmin{0.8}
  \def\xmax{10}
  \def\ymin{0}
  \def\ymax{0.9}
    \draw[->] (\xmin,\ymin) -- (\xmax,\ymin) node[right] {$n$} ;
    \draw[->] (\xmin,\ymin) -- (\xmin,\ymax) node[above] {$\rho_n$} ;
    \foreach \x in {1,2,3,4,5,6,7,8,9,10}
    \node at (\x,\ymin) [below] {\x};
    \foreach \y in {0,0.2,0.4,0.6,0.8}
    \node at (\xmin,\y) [left] {\y};
    
    \draw[smooth,green] (1,0.8)--	(2,0.7912)--	(3,0.7821)--	(4,0.7729)--	(5,0.7634)--	(6,0.7537)--	(7,0.7438)--	(8,0.7336)--	(9,0.7233)--	(10,0.7129);

    \foreach \Point in {(1,0.8),	(2,0.7912),	(3,0.7821),	(4,0.7729),	(5,0.7634),	(6,0.7537),	(7,0.7438),	(8,0.7336),	(9,0.7233),	(10,0.7129)}	
    {\node[green] at \Point {\scalebox{0.3}{$\triangle$}};}
    
    \draw[dotted,blue] (1,0.8)--	(2,0.7887)--	(3,0.777)--	(4,0.7649)--	(5,0.7523)--	(6,0.7394)--	(7,0.726)--	(8,0.7123)--	(9,0.6982)--	(10,0.6838);

    \foreach \Point in {(1,0.8),	(2,0.7887),	(3,0.777),	(4,0.7649),	(5,0.7523),	(6,0.7394),	(7,0.726),	(8,0.7123),	(9,0.6982),	(10,0.6838)}
    {\node[blue] at \Point {\scalebox{0.3}{\textbullet}};}
    
      \draw[densely dotted,red] ( 1 , 0.8 )-- ( 2 , 0.79403 )-- ( 3 , 0.78797 )-- ( 4 , 0.78181 )-- ( 5 , 0.77556 )-- ( 6 , 0.76922 )-- ( 7 , 0.7628 )-- ( 8 , 0.75629 )-- ( 9 , 0.74971 )-- ( 10 , 0.74306 );

    \foreach \Point in {( 1 , 0.8 ), ( 2 , 0.79403 ), ( 3 , 0.78797 ), ( 4 , 0.78181 ), ( 5 , 0.77556 ), ( 6 , 0.76922 ), ( 7 , 0.7628 ), ( 8 , 0.75629 ), ( 9 , 0.74971 ), ( 10 , 0.74306 )}	
    {\node[red] at \Point {\scalebox{0.3}{$\star$}};}
    
    \draw[dashed,black] (1,0.8)--	(2,0.7843)--	(3,0.7677)--	(4,0.7503)--	(5,0.7321)--	(6,0.7132)--	(7,0.6936)--	(8,0.6734)--	(9,0.6525)--	(10,0.6311);

    \foreach \Point in {(1,0.8),	(2,0.7843),	(3,0.7677),	(4,0.7503),	(5,0.7321),	(6,0.7132),	(7,0.6936),	(8,0.6734),	(9,0.6525),	(10,0.6311)}	
    {\node[black] at \Point {\scalebox{0.3}{$\square$}};}
\end{tikzpicture}
\caption{$\rho=\lambda=0.8$}\label{fig:rho_d}
\end{subfigure}
\vspace{0.5cm}

\begin{tikzpicture}
    \begin{customlegend}
    [legend entries={$k=0.5$, $k=1$,$k=2$,$k=10$},legend columns=-1,legend style={/tikz/every even column/.append style={column sep=0.8cm}}]   
	\addlegendimage{red,dashdotted,mark=star}     
    \addlegendimage{green,smooth,mark=triangle}     
    \addlegendimage{blue,densely dotted,mark=*}   
    \addlegendimage{black,mark=square} 
    \end{customlegend}
\end{tikzpicture}
\caption{Effective utilization rate sequence $\rho_n$ corresponding to the approximate optimal service-rate sequence for varying values of $\rho$ and $k$.}
\label{fig:rho}
\end{figure}
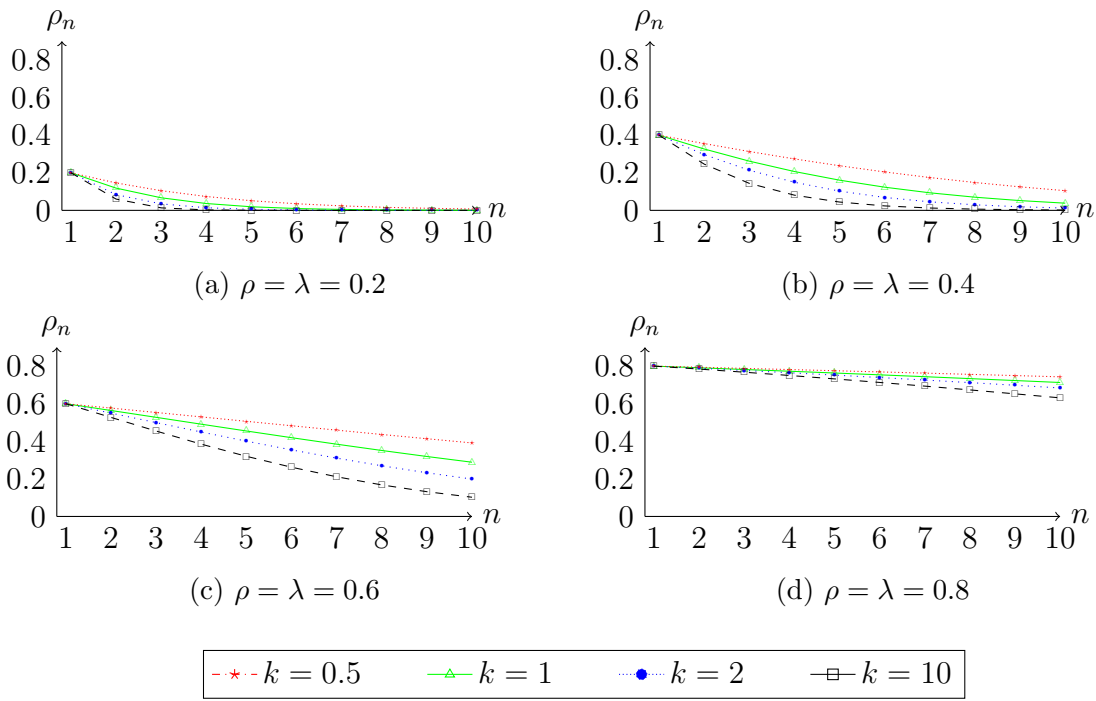

\subsection*{Summary of numerical results}

The approximat optimal service-rate sequence is close to geometric with decay rate $1-\mu_1$. In the special case of a Poisson arrival process we observed that $\mu_1\eqsim 1-\sqrt{\rho}$ and $\ell$ was slightly larger than $\rho$. When considering a fixed $\rho$, lower variance of the exogenous arrival stream leads to a higher service-rate for the first server, along with a faster decline to zero. However, as $\rho$ increases the service capacity is allocated more ``uniformly'' (with a lower decay rate). The effective utilization sequence of sub-systems, $\rho_n$, under the approximate optimal service-rate sequence is decreasing. Furthermore, this sequence has a geometric tail with a slower decay rate than the optimal service-rate sequence.

Recall that in order for the approximation to be reasonable we would like the sequence of overflow LST $L_{n-1}(\mu_n)$ to approach a constant at a quick rate. Indeed, we observe that it stabilizes very fast, suggesting that our approximation scheme using its limit is accurate even for a small number of DP steps $M$. This may provide some explanation as to why the optimal sequence seems geometric from the start. If the sequence $L_{n-1}(\mu_n)$ is more or less constant from the start then the solution to the TAP from Proposition \ref{prop:TAP} is close to optimal. It would be interesting to find an analytical explanation for why the optimal service-rate sequence comes with a stable overflow LST sequence.

\section{Applications}\label{sec:applications} 
The analysis presented here can be modified in order to solve several other system design problems, for example:
\begin{enumerate}[label=\alph*.]
\item Multi-objective optimization: suppose that the system administrator can choose the number of servers as well as the capacity allocation. If $n$ servers are used then the system has a customer loss probability of $p_n$. The administrator may seek an optimal allocation with a constraint on the loss probability, e.g. $p_n\leq p<1$. The other way around is also an option: minimize $p_n$ subject to a constraint on the delay, $\E S\leq w$.
\item Suppose that the system administrator wants to maximize the number of users that wait less than some $\tau>0$ time threshold. This could be an exogenous performance measure or the case if customers do not pay if their delay is too long. The objective is now
\[
\min_{\{\mu_n\}_{n=1}^\infty}\sum_{n=1}^\infty q_n e^{-\mu_n\tau}.
\]
\item Customers are heterogeneous with respect to the utility from the speed of service, and balk from the system if the fastest available server is slower than their value. Assume that customer values are distributed according to a continuous distribution with a convex cdf $\Lambda$ such that $\Lambda(0)>0$. If the system wants to minimize the blocking probability then the objective becomes
\[
\min_{\{\mu_n\}_{n=1}^\infty}\sum_{n=1}^\infty q_n(1-\Lambda(\mu_n)).
\]
In this case the overflow distribution also requires a modification to $\tilde{L}_{n-1}(\mu_n)=L_{n-1}(\mu_n)\Lambda(\mu_n)$, in order to take into account the balking customers.
\item In general, our analysis can be applied to any convex function of the service rate.
\end{enumerate}

\section{Discussion}\label{sec:discussion} 

This paper analyses stability and expected delay in an infinite-server system with finite service capacity. In particular, the expected service-time minimizing allocation of service-rates is examined. It is shown that the optimal service-rate sequence is geometrically decreasing at the tail. Numerical analysis suggests that the optimal allocation is very close to geometric throughout the sequence, and not just at the tail. This property is related to the product form of the blocking probabilities from finite sub-systems. An interesting numerical observation is that in the Poisson case we have that $\ell\eqsim\rho$ (the limiting term in the blocking probability product) and consequentially the optimal tail decay is simply $\sqrt{\rho}$. An open challenge is to find analytical justification for this phenomenon.

The most important conclusion of the paper is that allocating capacity to heterogeneous servers under capacity constraints should be done with caution. Even if there is seemingly enough capacity for the incoming arrival rate, allocating too much capacity to the fast servers may lead to very long expected delay. In this paper we analysed an infinite server system but this conclusion is also relevant for finite server loss systems with very low blocking probability, in which case expected delay would be finite but potentially very big. Although this is most relevant for very large systems, the geometric tail behaviour implies that with a ``bad" allocation the expected delay can increase very fast with the number of servers and therefore the conclusion is still relevant for moderately sized systems, i.e.\ not necessarily hundreds of servers.

Stability analysis in the probabilistic sense of the underlying Markov chain $\mathbf{X}(t)\in\mathcal{S}$ is also of interest. Specifically, establishing necessary and sufficient conditions for the process to be positive recurrent. This can perhaps be achieved by considering the embedded Markov chain at arrival times. For the Markovian M/M/$\infty$ ordered system, lower-bound and upper-bound systems with simpler dynamics can possibly be constructed and analysed using matrix-analytic methods. Rigorous characterisation of the stability conditions could potentially also shed some light on the open problems discussed in the end of Section \ref{sec:stability}. In particular, establishing the exact necessary and sufficient conditions for expected finite delay. The distinction between positive and null recurrence is potentially the additional refinement required for dealing with the case of a service sequence on the boundary of the feasibility region.

It would be interesting to study the asymptotic optimal control problem of this system using heavy-traffic analysis, i.e.\ scaling the parameters by an appropriate rate function $r(n)$,
\[
r(n)\lambda^{(n)}\xrightarrow{n\to\infty}\lambda=\mu\xleftarrow{n\to\infty}r(n)\mu^{(n)},\quad \lambda^{(n)}<\mu^{(n)} \ \forall n\geq 1.
\]
A question that arises is whether there is an asymptotic analog to the optimal geometric sequence we have presented here, perhaps even in closed form. For $\rho<1$ there is always a feasible service-rate sequence, however, our approximations are less accurate as $\rho\uparrow 1$, so heavy-traffic approximations may yield better insight to such systems. Detailed heavy-traffic analysis for the homogeneous ranked M/M/$\infty$ system can be found in \cite{book_N1984}, and approximation analysis of blocking probabilities in multi-server systems can be found in \cite{W1984}. Approximations of the number of idle servers in many-server systems, such as \cite{EG2018}, may also be useful. Approximating our model requires the analysis of systems with non-homogeneous servers (see \cite{AGS2013}).

\section{Acknowledgements}\label{sec:acknowledge} 
The authors wish to thank Abhishek, Brian Fralix and Johan van Leeuwaarden for helpful discussions and comments. We are also grateful to an associate editor and two referees for their detailed comments and suggestions which greatly improved this paper. This research was supported by the ISRAEL SCIENCE FOUNDATION (grant No. 355/15).

\footnotesize{\bibliography{/Users/lironravner/Dropbox/University/Research/Full_Bibliography/BigBib}}

\end{document}